\newtheorem{thm}{Theorem}
\newtheorem{conj}[thm]{Conjecture}
\newtheorem{cor}[thm]{Corollary}
\newtheorem{lem}[thm]{Lemma}
\theoremstyle{remark}
\theoremstyle{definition}
\newtheorem{definition}{Definition}
\numberwithin{equation}{section}
\newcommand{\secref}[1]{Section~\ref{#1}}
\newcommand{\lemref}[1]{Lemma~\ref{#1}}
\newcommand{\corref}[1]{Corollary~\ref{#1}}
\newcommand{\conjref}[1]{Conjecture~\ref{#1}}
\newcommand{\nc}{\newcommand}
\nc{\on}{\operatorname}
\nc{\ch}{\mbox{ch}}
\nc{\Z}{{\mathbb Z}}
\nc{\C}{{\mathbb C}}
\nc{\pone}{{\mathbb P}^1}
\nc{\pa}{\partial}
\nc{\F}{{\mathcal F}}
\nc{\arr}{\rightarrow}
\nc{\larr}{\longrightarrow}
\nc{\al}{\alpha}
\nc{\ri}{\rangle}
\nc{\lef}{\langle}
\nc{\W}{{\mathbb W}}
\nc{\la}{\lambda}
\nc{\ep}{\epsilon}
\nc{\su}{\widehat{{\mathfrak s}{\mathfrak l}}_2}
\nc{\sw}{{\mathfrak s}{\mathfrak l}}
\nc{\g}{{\mathfrak g}}
\nc{\h}{{\mathfrak h}}
\nc{\n}{{\mathfrak n}}
\nc{\N}{\widehat{\n}}
\nc{\G}{\widehat{\g}}
\nc{\De}{\Delta}
\nc{\gt}{\widetilde{\g}}
\nc{\Ga}{\Gamma}
\nc{\one}{{\mathbf 1}}
\nc{\z}{{\mathfrak Z}}
\nc{\La}{\Lambda}
\nc{\wt}{\widetilde}
\nc{\wh}{\widehat}
\nc{\cri}{_{\kappa_c}}
\nc{\kk}{\underline{\mathbf C}}
\nc{\sun}{\widehat{\sw}_N}
\nc{\si}{\sigma}
\nc{\el}{\ell}
\nc{\bi}{\bibitem}
\nc{\om}{\omega}
\nc{\ol}{\overline}
\nc{\ds}{\displaystyle}
\nc{\dzz}{\frac{dz}{z}}
\nc{\Res}{\on{Res}}
\nc{\mc}{\mathcal}
\nc{\Cal}{\mathcal}
\nc{\bb}{{\mathfrak b}}
\nc{\ot}{\otimes}
\nc{\R}{{\mathbb R}}
\nc{\yy}{{\mc Y}}
\nc{\ga}{\gamma}
\nc{\us}{\underset}
\nc{\opl}{\oplus}
\nc{\Fq}{{\mathbb F}_q}
\nc{\Mq}{{\mathcal M}}
\nc{\Rep}{\on{Rep}}
\nc{\sssec}{\subsubsection}
\nc{\ssec}{\subsection}
\nc{\lan}{\langle}
\nc{\ran}{\rangle}
\nc{\D}{\mathcal D}
\nc{\Vect}{\on{Vect}}
\nc{\ghat}{\G}
\nc{\T}{\mc T}
\nc{\Tloc}{\T^\g_{\on{loc}}}
\nc{\vac}{|0\ran}
\nc{\Wick}{{\mb :}}
\nc{\mb}{\mathbf}
\nc{\delz}{\partial_z}
\nc{\K}{{\mathbb K}}
\nc{\cali}{\mathcal}
\nc{\li}{\mathfrak l}
\nc{\lt}{\widetilde{\li}}
\nc{\astar}{a^*}
\nc{\cA}{{\mc A}}
\nc{\ka}{\kappa}
\nc{\OO}{{\mc O}}
\nc{\AutO}{\on{Aut}\OO}
\nc{\DerO}{\on{Der}\OO}
\nc{\DerpO}{\on{Der}_+\OO}
\nc{\Au}{{\mc A}ut}
\nc{\mf}{\mathfrak}
\nc{\V}{{\mathcal V}}
\nc{\hh}{\wh{\h}}
\nc{\pp}{{\mathfrak p}}
\nc{\mm}{{\mathfrak m}}
\nc{\rr}{{\mathfrak r}}
\nc{\ket}{\rangle}
\nc{\zz}{{\mathfrak z}}
\nc{\gr}{\on{gr}}
\nc{\Spe}{\on{Spec}}
\nc{\rv}{\crho}
\nc{\can}{\on{can}}
\nc{\CC}{\on{Op}_G(D))}
\nc{\Op}{\on{Op}_G(D)}
\nc{\MOp}{\on{MOp}_G(D)}
\nc{\Db}{{\mathbb D}}
\nc{\ww}{w}
\nc{\af}{{\mathbb A}^1}
\nc{\bs}{\backslash}
\nc{\laa}{(\la_i)}
\nc{\zn}{(z_i)}
\nc{\cla}{\check{\la}}
\nc{\cmu}{\check{\mu}}
\nc{\crho}{\check{\rho}}
\nc{\chal}{\check{\al}}
\nc{\cc}{{\mathfrak c}}
\nc{\MM}{{\mathbb M}}
\nc{\ZZ}{{\mc Z}}
\nc{\UU}{{\mathbb U}}
\nc{\Conn}{\on{Conn}(\Omega^{\crho})}
\nc{\Con}{\on{Conn}(\Omega^{-\rho})}
\nc{\Co}{\on{Conn}(\Omega^{\rho})}
\nc{\ppart}{(\!(t)\!)}
\nc{\pparl}{(\!(\la)\!)}
\nc{\zpart}{(\!(z)\!)}
\nc{\ppzi}{(\!(t-z_i)\!)}
\nc{\ppinf}{(\!(t^{-1})\!)}
\nc{\Ind}{\on{Ind}}
\nc{\I}{{\mathbb I}}
\nc{\Bun}{\on{Bun}}
\newcommand {\IC}{\on{IC}}
\nc{\gtil}{\wt{\g}}
\nc{\ntil}{\wt{\n}}
\nc{\htil}{\wt{\h}}
\nc{\gbar}{\ol{\g}}
\nc{\nbar}{\ol{\n}}
\nc{\bbar}{\ol{\bb}}
\nc{\lhat}{\wh{\mf l}}
\nc{\ovc}{\overset{\circ}}
\nc{\Gr}{\on{Gr}}
\nc{\AD}{{\mathbb A}}
\nc{\Gm}{{\mathbb G}_m}
\nc{\Ql}{{\mathbb Q}_\ell}
\nc{\Loc}{\on{Loc}}
\nc\GG{\mathbb G}
\nc\Xb{\mathbf X}
\nc\inv{{\rm inv}}
\nc\isom{=}
\nc\Hc{{\mathcal H}}
\nc\ovl{\overline}
\newcommand{\BD}{{}}
\newcommand\Oc{\mathcal{O}}
\nc\Lc{{\mathcal L}}
\nc\Ec{{\mathcal E}}
\nc\wF{{\mc K}}
\nc\pr{{\rm pr}}
\nc\SL{{\rm SL}}
\nc\PGL{{\rm PGL}}
\nc\GL{{\rm GL}}
\nc\LG{{}^L\negthinspace G}
\nc\LH{{}^L\negthinspace H}
\nc\Pc{{\mc P}}
\nc\Ac{{\mc A}}
\nc\dv{{\rm div}}
\nc\Fc{{\mc F}}
\nc\M{{\mc M}}
\nc\Q{{\mathbb Q}}
\nc\Fp{{\mathbb F}_p}
\nc{\und}{\underline}
\def\H{{\mathbb H}}
\begin{document}

\title{Langlands Program, Trace Formulas, and their Geometrization}

\dedicatory{Notes for the AMS Colloquium Lectures at the Joint
  Mathematics Meetings in Boston, \\ January 4--6, 2012}

\author[Edward Frenkel]{Edward Frenkel}\thanks{Supported by DARPA
  under the grant HR0011-09-1-0015}

\address{Department of Mathematics, University of California,
Berkeley, CA 94720, USA}

\begin{abstract}

  The Langlands Program relates Galois representations and automorphic
  representations of reductive algebraic groups. The trace formula is
  a powerful tool in the study of this connection and the Langlands
  Functoriality Conjecture. After giving an introduction to the
  Langlands Program and its geometric version, which applies to curves
  over finite fields and over the complex field, I give a survey of my
  recent joint work with Robert Langlands and Ng\^o Bao Ch\^au
  \cite{FLN,FN} on a new approach to proving the Functoriality
  Conjecture using the trace formulas, and on the geometrization of
  the trace formulas. In particular, I discuss the connection of the
  latter to the categorification of the Langlands correspondence.

\end{abstract}

\maketitle

\tableofcontents

\newpage

\vspace*{10mm}

\section{Introduction}

\setcounter{footnote}{1}

The Langlands Program was initiated by Robert Langlands in the late
60s in order to connect number theory and harmonic analysis
\cite{L}. In the last 40 years a lot of progress has been made in
proving the Langlands conjectures, but much more remains to be
done. We still don't know the underlying reasons for the deep and
mysterious connections suggested by these conjectures. But in the
meantime, these ideas have propagated to other areas of mathematics,
such as geometry and representation theory of infinite-dimensional Lie
algebras, and even to quantum physics, bringing a host of new ideas
and insights. There is hope that expanding the scope of the Langlands
Program will eventually help us get the answers to the big questions
about the Langlands duality.

In this lectures I will give an overview of this subject and describe
my recent joint work with Robert Langlands and Ng\^o Bao Ch\^au
\cite{FLN,FN} on the Functoriality Principle and the geometrization of
the trace formulas.

The key objects in the Langlands Program are {\em automorphic
  representations} of a reductive algebraic group $G$ over a global
field $F$, which is either a number field or the field of rational
functions on a smooth projective curve $X$ over a finite field. These
are the constituents in the decomposition of the space $L_2(G(F)\bs
G(\AD_F))$ under the right action of $G(\AD_F)$, where $\AD_F$ is the
ring of ad\`eles of $F$ (see \secref{lc} for details). Let now $G$ and
$H$ be two reductive algebraic groups over $F$, and $\LG$ and $\LH$
their Langlands dual groups as defined in \cite{L}. The Langlands {\em
  Functoriality Principle} \cite{L} states that for each admissible
homomorphism
$$\LH \to {}\LG$$ there exists a {\em transfer} of automorphic
representations, from those of $H(\AD_F)$ to those of $G(\AD_F)$,
satisfying some natural properties. Functoriality has been established
in some cases, but is still unknown in general (see
\cite{Arthur:funct} for a survey).

In \cite{FLN}, following \cite{L:BE,L:PR} (see also
\cite{L:ST,L:IAS}), a strategy for proving functoriality was
proposed. In the space of automorphic functions on $G(F)\bs G(\AD_F)$
we construct a family of integral operators ${\mb K}_{d,\rho}$, where
$d$ is a positive integer and $\rho$ is a finite-dimensional
representation of $\LG$, which for sufficiently large $d$ project onto
the automorphic representations of $G(\AD_F)$ that come by
functoriality from automorphic representations of certain groups $H$
determined by $\rho$. We then apply the {\em Arthur--Selberg trace
  formula}
\begin{equation}    \label{tr}
\on{Tr} {\mb K}_{d,\rho} = \int K_{d,\rho}(x,x) \; dx,
\end{equation}
where $K_{d,\rho}(x,y)$ is the kernel of ${\mb K}_{d,\rho}$, a
function on the square of $G(F)\bs G(\AD_F)$. The left hand, {\em
  spectral}, side of \eqref{tr} may be written as the sum over
irreducible automorphic representations $\pi$ of $G(\AD_F)$:
\begin{equation}    \label{LHS}
\sum_\pi m_\pi \on{Tr}({\mb K}_{d,\rho},\pi),
\end{equation}
where $m_\pi$ is the multiplicity of $\pi$ in the space of $L_2$
functions on $G(F)\bs G(\AD_F)$) (here we ignore the continuous part
of the spectrum). The right hand, {\em orbital}, side of \eqref{tr}
may be written as a sum over the conjugacy classes $\gamma$ in $G(F)$:
\begin{equation}    \label{RHS}
\sum_{\gamma \in G(F)/\on{conj.}} a_\gamma O_\gamma(K_{d,\rho}),
\end{equation}
where $O_\gamma(K_{d,\rho})$ is an ``orbital integral'': an integral
over the conjugacy class of $\gamma$ in $G(\AD_F)$ (see \secref{trace
  for}).

The idea is to analyze the orbital side of the trace formula and
compare the corresponding orbital integrals of the group $G$ to those
of the groups $H$. This way one hopes to connect the spectral sides of
the trace formulas for $G$ and $H$ and hence prove functoriality. In
\cite{FLN,FN} we related this to the geometric and categorical forms
of the Langlands correspondence and made the first steps in developing
the geometric methods for analyzing these orbital integrals in the
case of the function field of a curve $X$ over a finite field $\Fq$.

\medskip

By a {\em geometrization} of the trace formula \eqref{tr} we
understand representing each side as the trace of the Frobenius
automorphism on a vector space equipped with an action of
$\on{Gal}(\ol{\mathbb F}_q/\Fq)$ (the Galois group of the finite field
$\Fq$, over which our curve $X$ is defined).

Such a reformulation is useful because, first of all, unlike mere
numbers, these vector spaces may well carry additional structures that
could help us understand the connections we are looking for. For
example, in B.C. Ng\^o's beautiful recent proof of the fundamental
lemma \cite{Ngo:FL}, he used the \'etale cohomologies of certain
moduli spaces (the fibers of the so-called Hitchin map). Ng\^o showed
that these cohomologies carry natural actions of finite groups and he
used these actions to isolate the ``right'' pieces of these
cohomologies and to prove their isomorphisms for different groups.
The equality of the traces of the Frobenius on these vector spaces
then yields the fundamental lemma.

The second reason why geometrization is useful is that we expect that,
unlike the corresponding numbers, these vector spaces would also make
sense for curves defined over $\C$, so that we would obtain a version
of the trace formula for complex curves.

\medskip

As the first step in the program of geometrization of trace formulas,
we showed in \cite{FN} that the kernels $K_{d,\rho}$ constructed in
\cite{FLN} may be obtained using the Grothendieck {\em
  faisceaux--fonctions} dictionary from perverse sheaves ${\mc
  K}_{d,\rho}$ on a certain algebraic stack over the square of
$\Bun_G$, the moduli stack of $G$-bundles on $X$. Hence the right hand
side of the trace formula \eqref{tr} may indeed be written as the
trace of the Frobenius on the \'etale cohomology of the restriction of
${\mc K}_{d,\rho}$ to the diagonal in $\Bun_G \times \Bun_G$. This may
be further rewritten as the cohomology of a sheaf defined on the
moduli stack of ``$G$-pairs'', which is closely related to the Hitchin
moduli stack of Higgs bundles on $X$ (see \cite{FN} and \secref{def
  stack}). Thus, we obtain a geometrization of the orbital side of the
trace formula \eqref{tr}.

The idea then is to use the geometry of these moduli stacks to prove
the desired identities of orbital integrals for $G$ and $H$ by
establishing isomorphisms between the corresponding cohomologies. As
we mentioned above, a geometric approach of this kind turned out to be
very successful in B.C. Ng\^o's recent proof of the fundamental lemma
\cite{Ngo:FL}. The elegant argument of \cite{Ngo:FL} takes advantage
of the decomposition of the cohomology of the fibers of the Hitchin
map under the action of finite groups. In our case, the decomposition
of the cohomology we are looking for does not seem to be due to an
action of a group. Hence we have to look for other methods. Some
conjectures in this direction were formulated in \cite{FN}. We discuss
them in \secref{conj gen}.

\medskip

The moduli stacks and the sheaves on them that appear in this picture
have natural analogues for curves over $\C$, and hence the
geometrization allows us to include complex curves into
consideration. We can then use the methods of complex algebraic
geometry (some of which have no obvious analogues over a finite field)
to tackle the questions of functoriality that we are interested in.

\medskip

Thus, the geometrization of the right hand (orbital) side \eqref{RHS}
of \eqref{tr} is the cohomology of a sheaf on the moduli stack of
$G$-pairs. In \cite{FN} we also looked for a geometrization of the
left hand (spectral) side of \eqref{tr}, trying to interpret the sum
\eqref{LHS} as the Lefschetz trace formula for the trace of the
Frobenius on the \'etale cohomology of an $\ell$-adic sheaf. It is not
obvious how to do this, because the set of the $\pi$'s appearing in
\eqref{LHS} is not the set of points of a moduli space (or stack) in
any obvious way.

However, according to the Langlands correspondence \cite{L}, reviewed
in \secref{lc}, the $L$-packets of irreducible (tempered) automorphic
representations of $G(\AD_F)$ are supposed to be parametrized by
the homomorphisms
$$
\sigma: W(F) \to \LG,
$$
where $W(F)$ is the Weil group of the function field $F$ and $\LG$ is
the Langlands dual group to $G$. Therefore, assuming the Langlands
correspondence, we may rewrite the sum \eqref{LHS} as a sum over such
$\sigma$. Unfortunately, if $k$, the field of definition of our
curve $X$, is a finite field, there is no reasonable algebraic stack
whose $k$-points are the equivalence classes of homomorphisms
$\sigma$. But if $k=\C$, such a stack exists! If we restrict
ourselves to the unramified $\sigma$, then we can use the algebraic
stack $\Loc_{\LG}$ of flat $\LG$-bundles on $X$. Hence we can pose
the following question: define a sheaf on this stack such that its
cohomology (representing the left hand side of \eqref{tr} in the
complex case) is isomorphic to the cohomology representing the right
hand side of \eqref{tr}. This isomorphism would then be a {\em
geometrization of the trace formula} \eqref{tr}.

The idea of Ng\^o and myself \cite{FN} is that the answer may be
obtained in the framework of a {\em categorical form of the geometric
  Langlands correspondence}, which we review below. It is a
conjectural equivalence between derived categories of $\OO$-modules
on the moduli stack $\Loc_{\LG}$ and ${\mc D}$-modules on the moduli
stack $\Bun_G$ of $G$-bundles on $X$. Such an equivalence has been
proved in the abelian case by G. Laumon \cite{Laumon} and M. Rothstein
\cite{Rothstein}, and in the non-abelian case it has been suggested as
a conjectural guiding principle by A. Beilinson and V. Drinfeld (see,
e.g., \cite{F:houches,Laf,LL} for an exposition). This categorical
version of the geometric Langlands correspondence also appears
naturally in the $S$-duality picture developed by A. Kapustin and
E. Witten \cite{KW} (see \cite{F:bourbaki} for an exposition).

At the level of objects, the categorical Langlands correspondence
assigns to the skyscraper $\OO$-module supported at a given flat
$\LG$-bundle ${\mc E}$ on $X$ a {\em Hecke eigensheaf} on $\Bun_G$
with ``eigenvalue'' ${\mc E}$. But an equivalence of categories also
gives us non-trivial information about morphisms; namely, the Hom's
between the objects corresponding to each other on the two sides
should be isomorphic. The idea of \cite{FN}, which we review below, is
that for suitable objects the isomorphism of their Hom's yields the
sought-after geometric trace formula. This led us to propose in
\cite{FN} a conjectural geometrization of the trace formula \eqref{tr}
in this framework, which we review in \secref{leap}.

In deriving this geometric trace formula, we work with the Hom's in
the categories of sheaves on the squares of $\Bun_G$ and $\Loc_{\LG}$,
because this is where the kernels of our functors ``live''.

We also obtained in \cite{FN} an analogous statement in the categories
of sheaves on the stacks $\Bun_G$ and $\Loc_{\LG}$ themselves.  The
result is a geometrization of the {\em relative trace formula}, also
known as the Kuznetsov trace formula, see, e.g., \cite{Jacquet}. This
formula has some favorable features compared to the usual trace
formula (for example, only tempered automorphic representations, and
only one representation from each $L$-packet -- the ``generic'' one --
are expected to contribute). But there is a price: in the sum
\eqref{LHS} appears a weighting factor, the reciprocal of the value of
the $L$-function of $\pi$ in the adjoint representation at $s=1$. The
insertion of this factor in this context has been considered
previously in \cite{Sarnak} and \cite{Ven} for the group $GL_2$. Ng\^o
and I showed in \cite{FN} (see \secref{abl fixed}) that this factor
also has a natural geometric interpretation, as coming from the {\em
  Atiyah--Bott--Lefschetz fixed point formula}.

The geometric trace formulas proposed by Ng\^o and myself in \cite{FN}
are still in a preliminary form, because several important issues
need to be worked out. Nevertheless, we believe that they contain
interesting features and even in this rough form might provide a
useful framework for a better geometric understanding of the trace
formula as well as the geometric Langlands correspondence.

\medskip

These notes are organized as follows. We start with a brief
introduction to the classical Langlands correspondence in \secref{lc}
and its geometric and categorical forms in \secref{GLC}. In
\secref{FTF} we survey the trace formula and its applications to the
functoriality of automorphic representations, following \cite{FLN}. In
\secref{gos} we describe, following \cite{FN}, the geometrization of
the orbital side of the trace formula in terms of the cohomology of
certain sheaves on the moduli stacks which are group-like analogues of
the Hitchin moduli stacks of Higgs bundles. The geometrization of the
spectral side of the trace formula is discussed in \secref{leap}, and
the relative trace formula and its geometrization in \secref{rel
  trace}. Here we follow closely \cite{FN}.

\medskip

\noindent {\bf Acknowledgments.} I am grateful to Robert Langlands and
Ng\^o Bao Ch\^au for their collaboration on the papers \cite{FLN,FN}
which are reviewed in these notes.

I thank Ivan Fesenko for his comments on a draft of this paper.

\section{The Classical Langlands Program}    \label{lc}

The Langlands correspondence, in its original form, manifests a deep
connection between number theory and representation theory. In
particular, it relates subtle number theoretic data (such as the
numbers of points of a mod $p$ reduction of an elliptic curve defined
by a cubic equation with integer coefficients) to more easily
discernible data related to automorphic forms (such as the
coefficients in the Fourier series expansion of a modular form on the
upper half-plane). In this section we give an outline this
correspondence.

\subsection{The case of $GL_n$}

Let $\Q$ be the field of rational numbers. Denote by $\ol{\Q}$ its
algebraic closure, the field of algebraic numbers, obtained by
adjoining to $\Q$ the roots of all polynomial equations in one
variable with coefficients in $\Q$. The arithmetic questions about
algebraic numbers may be expressed as questions about the {\em Galois
  group} $\on{Gal}(\ol{\Q}/\Q)$ of all field automorphisms of $\Q$.

A marvelous insight of Robert Langlands was to conjecture \cite{L}
that there exists a connection between $n$-{\em dimensional
  representations} of $\on{Gal}(\ol{\Q}/\Q)$ and irreducible
representations of the group $GL_n({\mathbb A}_{\Q})$ which occur in
the space of $L_2$ functions on the quotient $GL_n(\Q)\bs
GL_n({\mathbb A}_{\Q})$.

Here $\AD_{\Q}$ is the restricted product of all completions of $\Q$:
the field $\Q_p$ $p$-adic numbers, where $p$ runs over all primes, and
the field $\R$ of real numbers. Elements of $\AD_{\Q}$ are infinite
collections
$$
((x_p)_{p \on{prime}},x_\infty),
$$
where for all but finitely many primes $p$ we have $x_p \in \Z_p
\subset \Q_p$, the ring of $p$-adic integers. (This is the meaning of
the word ``restricted'' used above.) We have a diagonal embedding $\Q
\hookrightarrow \AD_{\Q}$, and hence an embedding of groups $GL_n(\Q)
\hookrightarrow GL_n({\mathbb A}_{\Q})$.

Under the right action of $GL_n({\mathbb A}_{\Q})$ on $L_2(GL_n(\Q)\bs
GL_n({\mathbb A}_{\Q}))$, we have a decomposition into irreducible
representations appearing both discretely and continuously:
$$
L_2(GL_n({\Q})\bs GL_n({\mathbb A}_{\Q})) = \bigoplus \; \pi \; \bigoplus
\; \on{continuous} \; \on{spectrum}
$$
The irreducible representations $\pi$ appearing in the discrete
spectrum, as well as the suitably defined constituents of the
continuous spectrum (we are not going to give the precise definition)
are called {\em automorphic representations}. The theory of Eisenstein
series reduces the study of automorphic representations to the study
of those occurring in the discrete spectrum.

Schematically, the {\em Langlands correspondence} for $GL_n$ may be
formulated as a correspondence between the equivalence classes of the
following data:

\begin{equation}    \label{LC n}
\boxed{\begin{matrix} n\text{-dimensional representations} \\
    \text{of } \on{Gal}(\ol{{\Q}}/{\Q}) \end{matrix}} \quad \longrightarrow
    \quad \boxed{\begin{matrix} \text{irreducible automorphic }
    \\ \text{representations of } GL_n({\mathbb A}_{\Q})
\end{matrix}}
\end{equation}
\bigskip

The Langlands correspondence is very useful for understanding deep
questions in number theory. First of all, according to the ``Tannakian
philosophy'', one can reconstruct a group from the category of its
finite-dimensional representations, equipped with the structure of the
tensor product. Describing the equivalence classes of $n$-dimensional
representations of the Galois group may be viewed as a first step
towards understanding its structure.

\medskip

\noindent {\bf Technical point.} In order to describe the tensor
category of representations of $\on{Gal}(\ol{{\Q}}/{\Q})$, one needs
to do much more: one has to consider the categories of automorphic
representations of $GL_n({\mathbb A}_{\Q})$ for all $n$ at once and
define various functors between them corresponding to taking the
direct sums and tensor products of Galois representations. This is
closely related to the functoriality of automorphic representations
that we discuss in \secref{FTF} below.\qed

\medskip

Second, there are many interesting representations of Galois groups
arising in ``nature''. Indeed, the group $\on{Gal}(\ol\Q/\Q)$ acts on
the geometric invariants (such as the \'etale cohomologies) of an
algebraic variety defined over $\Q$. For example, if we take an
elliptic curve $E$ over $\Q$, then we will obtain a representation of
$\on{Gal}(\ol\Q/\Q)$ on its first \'etale cohomology, which is a
two-dimensional vector space (much like the first cohomology of an
elliptic curve defined over $\C$).

\medskip

Under the Langlands correspondence \eqref{LC n}, some important
invariants attached to Galois representations and automorphic
representations have to to match. These are the so-called {\em
  Frobenius eigenvalues} for the former and the {\em Hecke
  eigenvalues} for the latter. They are attached to all but finitely
many primes. We will discuss the latter in more detail in
\secref{hecke eig}, and for the former, see \cite{F:houches},
Section 1.5.

\subsection{Examples}

The correspondence \eqref{LC n} is well understood for $n=1$. This is
the abelian case. Indeed, one-dimensional representations of any group
factor through its maximal abelian quotient. In the case of
$\on{Gal}(\ol{{\Q}}/{\Q})$, the maximal abelian quotient is the Galois
group of the maximal abelian extension of $\Q$. According to the
Kronecker--Weber theorem, this is the field obtained by adjoining to
$\Q$ all roots of unity. One derives from this description that the
corresponding Galois group is isomorphic to the group of connected
components of
$$
GL_1(\Q) \bs GL_1(\AD_{\Q}) = \Q^\times\bs {\mathbb
  A}_{\Q}^\times.
$$
This is what \eqref{LC n} boils down to for $n=1$. The Abelian Class
Field Theory gives a similar adelic description of the maximal abelian
quotient of the Galois group $\on{Gal}(\ol{F}/F)$, where $F$ is a
general number field (see, e.g., \cite{F:houches}, Section 1.2, for
more details).

\medskip

Suppose next that $n=2$. Let $\sigma$ come from the first \'etale
cohomology of the smooth elliptic curve $E$ defined by the equation
\begin{equation}    \label{ell curve}
y^2 = x^3 + ax + b,
\end{equation}
where $a, b \in \Z$ are such that the discriminant is non-zero, $4a^3
+ 27b^2 \neq 0$. The representation $\sigma$ contains a lot of
important information about the curve $E$. The corresponding Frobenius
eigenvalues encode, for each prime $p$ not dividing the discriminant,
the number of points of the reduction of $E$ modulo $p$, $\#
E(\Fp)$. This is simply the number of solutions of the equation
\eqref{ell curve} mod $p$ plus one, corresponding to the point at
infinity (our $E$ is a projective curve).

According to the Langlands correspondence, $\sigma$ should correspond
to a cuspidal automorphic representation $\pi$ of $GL_2(\AD_{\Q})$. To
make things more concrete, we assign to this automorphic
representation in a standard way (see, e.g., \cite{F:houches}, Section
1.6) a modular cusp form
$$
f(q) = \sum_{n=1}^\infty a_n q^n
$$
on the upper half-plane $\{ \tau \in \C \, | \, \on{Im} \tau > 0 \}$,
where $q=e^{2\pi i \tau}$. The matching of the Frobenius and Hecke
eigenvalues under the Langlands correspondence now becomes the
statement of the Shimura--Taniyama--Weil conjecture (now a theorem
\cite{Wiles}): for each $E$ as above there exist a modular cusp form
$f_E(q)$ with $a_1=1$ and
\begin{equation}    \label{matching}
a_p = p + 1 - \# E(\Fp)
\end{equation}
for all primes $p$ not dividing the discriminant of $E$ (and also,
$a_{mn} = a_m a_n$ for all relatively prime $m$ and $n$).

This is a stunning result: the modular form $f_E$ serves as a
generating function of the numbers of points of $E$ mod $p$ for almost
all $p$.

It implies, according to a result of K. Ribet, Fermat's Last Theorem.

One obtains similar statements by analyzing from the point of view of
the Langlands correspondence the Galois representations coming from
other algebraic varieties, or more general motives. This shows the
great power of the Langlands correspondence: it translates difficult
questions in number theory to questions in harmonic analysis.

\subsection{Function fields}

The correspondence \eqref{LC n} is not a bijection. But it becomes a
bijection if we replace $\Q$ (or a more general number field) in
\eqref{LC n} by a function field.

Let $X$ be a smooth projective connected curve over a finite field
$k=\Fq$. The field $\Fq(X)$ of ($\Fq$-valued) rational functions on
$X$ is called the {\em function field} of $X$.

For example, suppose that $X = \pone$. Then $\Fq(X)$ is just the field
of rational functions in one variable. Its elements are fractions
$P(t)/Q(t)$, where $P(t)$ and $Q(t) \neq 0$ are polynomials over $\Fq$
without common factors, with their usual operations of addition and
multiplication.

It turns out that there are many similarities between function fields
and number fields. For example, let's look at the completions of the
function field $\Fq(\pone)$. Consider the field $\Fq\ppart$ of formal
Laurent power series in the variable $t$. An element of this
completion is a series of the form $\sum_{n \geq N} a_n t^n$, where $N
\in \Z$ and each $a_n$ is an element of $\Fq$. Elements of
$\Fq(\pone)$ are rational functions $P(t)/Q(t)$, and such a rational
function can be expanded in an obvious way in a formal power series in
$t$. This defines an embedding of fields $\Fq(\pone) \hookrightarrow
\Fq\ppart$, which makes $\Fq\ppart$ into a completion of $\Fq(\pone)$,
with respect to a standard norm.

Observe that the field $\Fp\ppart$ looks very much like the field
$\Q_p$ of $p$-adic numbers. Likewise, the field $\Fq\ppart$, where $q
= p^n$, looks like a degree $n$ extension of $\Q_p$.

The completion $\Fp\ppart$ corresponds to the maximal ideal in the
ring $\Fq[t]$ generated by $A(t) = t$ (note that $\Fq[t] \subset
\Fq(\pone)$ may be thought of as the analogue of $\Z \subset
\Q$). Other completions of $\Fq(\pone)$ correspond to other maximal
ideals in $\Fq[t]$, which are generated by irreducible monic
polynomials $A(t)$. (Those are the analogues of the ideals $(p)$ in
$\Z$ generated by the prime numbers $p$). There is also a completion
corresponding to the point $\infty \in \pone$, which is isomorphic to
$\Fq(\!(t^{-1})\!)$.

If the polynomial $A(t)$ has degree $m$, then the corresponding
residue field is isomorphic to ${\mathbb F}_{q^m}$, and the
corresponding completion is isomorphic to ${\mathbb
  F}_{q^m}(\!(\wt{t})\!)$, where $\wt{t}$ is the ``uniformizer'',
$\wt{t} = A(t)$. One can think of $\wt{t}$ as the local coordinate
near the ${\mathbb F}_{q^m}$-point corresponding to $A(t)$, just like
$t-a$ is the local coordinate near the $\Fq$-point $a$ of ${\mathbb
  A}^1$. The difference with the number field case is that
all of these completions are non-archimedian; there are no analogues
of the archimedian completions $\R$ or $\C$ that we have in the case
of number fields.

For a general curve $X$, completions of $\Fq(X)$ are also labeled by
its closed points (forming the set denoted by $|X|$), and the
completion corresponding to a point $x$ with residue field ${\mathbb
  F}_{q^n}$ is isomorphic to ${\mathbb F}_{q^n}(\!(t_x)\!)$, where
$t_x$ is the ``local coordinate'' near $x$ on $X$. The subring
${\mathbb F}_{q^m}[[t_x]]$ consisting of the formal Taylor series
(no negative powers of $t_x$) will be denoted by $\OO_x$.

\subsection{The Langlands correspondence}

Let $F$ be the function field of a curve $X$ over $\Fq$. The ring
$\AD_F$ of {\em ad\`eles} of $F$ is by definition the {\em restricted}
product of the fields $F_x$, where $x$ runs over the set of all closed
points of $X$:
$$
\AD_F = \prod_{x \in |X|}{}' \quad F_x.
$$
The word ``restricted'' (reflected by the prime in the above product)
means that we consider only the collections $(f_x)_{x \in X}$ of
elements of $F_x$ in which $f_x \in \OO_x$ for all but finitely many
$x$. The ring $\AD_F$ contains the field $F$, which is embedded into
$\AD_F$ diagonally, by taking the expansions of rational functions on
$X$ at all points. One defines {\em automorphic representations} of
$GL_n(\AD_F)$ as constituents of the space of $L_2$ functions on the
quotient $GL_n(F)\bs GL_n(\AD_F)$ defined as in the number field
case.

The objects that will appear on the right hand side of the Langlands
correspondence are the so-called {\em tempered} automorphic
representations. These are the representations for which the
``Ramanujan hypothesis'' is expected to hold (hence they are sometimes
called ``Ramanujan representations''). This means that each of the
conjugacy classes $\nu_x$ in the complex group $GL_n$ (or $\LG$ in
general) encoding the Hecke eigenvalues of $\pi$ (see \secref{hecke
  eig} below) is unitary. For $G=GL_n$ it is known that all cuspidal
automorphic representations are tempered.

Now let $\ol{F}$ be the separable closure of $F$. We have the Galois
group $\on{Gal}(\ol{F}/F)$ and a natural homomorphism
\begin{equation}    \label{hom Gal}
\on{Gal}((\ol{F}/F) \to \on{Gal}(\ol{{\mathbb F}}_q/\Fq),
\end{equation}
due to the fact that $\Fq$ is the subfield of scalars in $F$. Now,
$\on{Gal}(\ol{{\mathbb F}}_q/\Fq)$ is isomorphic to the pro-finite
completion of $\Z$,
$$
\wh\Z = \underset{\longleftarrow}\lim \; \Z/N\Z =
\underset{\longleftarrow}\lim \; \on{Gal}({\mathbb F}_{q^N}/\Fq).
$$
The preimage of $\Z \subset \wh\Z$ in $\on{Gal}((\ol{F}/F)$ under the
homomorphism \eqref{hom Gal} is called the {\em Weil group} of $F$ and
is denoted by $W(F)$.

On the left hand side of the Langlands correspondence we take the
equivalence classes $n$-dimensional representations of the Weil group
$W(F)$.

The function field analogue of the Langlands correspondence \eqref{LC
  n} is then given by the following diagram:

\begin{equation}    \label{LC n fun}
\boxed{\begin{matrix} n\text{-dimensional} \\
    \text{representations of } W(F) \end{matrix}} \quad
    \longleftrightarrow \quad \boxed{\begin{matrix}
    \text{irreducible tempered automorphic} \\ \text{representations
    of } GL_n({\mathbb A}_F) \end{matrix}}
\end{equation}

\bigskip

$$
\sigma \quad \longleftrightarrow \quad \pi
$$

\bigskip

Under the correspondence the Frobenius eigenvalues of $\sigma$ should
match the Hecke eigenvalues of $\pi$.

This correspondence has been proved by V. Drinfeld \cite{Dr1,Dr2} for
$n=2$ and by L. Lafforgue \cite{LLaf} for $n>2$.

\medskip

\noindent {\bf Technical point.} The representations of $W(F)$
appearing on the left hand side of \eqref{LC n fun} should be
continuous with respect to the Krull topology on $W(F)$. However,
continuous complex representations necessarily factor through a finite
group. In order to obtain a large enough class of such representations
we should consider the so-called $\ell$-adic representations, defined
not over $\C$, but over a finite extension of the field $\Q_\ell$ of
$\ell$-adic numbers, where $\ell$ is a prime that does not divide
$q$. Frobenius eigenvalues will then be $\ell$-adic numbers, and to
match them with the Hecke eigenvalues, which are complex numbers, we
need to choose once and for all an isomorphism of $\C$ and
$\overline\Q_\ell$ as abstract fields (extending the identification of
their subfield $\ol\Q$). But it follows from the theorem of Drinfeld
and Lafforgue that actually they all belong to $\ol\Q$, so this
isomorphism is never used, and the left hand side of \eqref{LC n fun}
is independent of $\ell$. For more on this, see, e.g.,
\cite{F:houches}, Section 2.2.\qed

\subsection{Langlands dual group}   \label{L dual grp}

An important insight of Langlands \cite{L} was that the correspondence
\eqref{LC n fun} may be generalized by replacing the group $GL_n$ on
the right hand side by an arbitrary connected reductive algebraic
group $G$ over the field $F$. This necessitates introducing the
so-called {\em Langlands dual group}.

The simplest case to consider is that of a reductive group $G$ that is
defined over the finite field of scalars $k=\Fq$ and is split over
$k$. This means that it contains a maximal torus that is split
(isomorphic to the direct product of copies of the multiplicative
group) over $k$. In these notes we will consider slightly more general
groups of the following kind: $G$ is a group scheme over $X$ which
contains a Borel subgroup scheme $B$ also defined over $X$, and there
exists an \'etale cover $X' \to X$ such that the pull-back of $G$ to
$X'$ (resp., $B$) is isomorphic to the constant group scheme $X'
\times \mathbb G$, where $\mathbb G$ is a split reductive group over
$k$ (resp., $X' \times \mathbb B$, where $\mathbb B$ is a Borel
subgroup of $\mathbb G$).

For example, let $\mathbb G$ be the multiplicative group ${\mathbb
  G}_m = GL_1$. Let $X' \to X$ be an \'etale double cover of $X$. The
group $\Z_2 = \{ \pm 1 \}$ acts fiberwise on $X'$ and on $\mathbb
G_m$ by the formula $x \to x^{\pm 1}$. Define $H$ as the following
group scheme over $X$:
$$
H = X' \underset{\Z_2}\times \mathbb G_m.
$$
This is an example of a ``twisted torus''. Its pull-back to $X'$ is
isomorphic to $X' \times \mathbb G_m$. We can now talk about the group
over any subscheme of $X$. For example, we have the group $H(F)$ of
sections of $H$ over the ``generic point'' of $X$, $\on{Spec} F$,
the local group for each $x \in |X|$, which is the group of sections
of $H$ over the formal punctured disc $D_x^\times = \on{Spec} F_x$,
its subgroup of sections over the formal disc $D_x = \on{Spec} \OO_x$.
The ad\'elic group is now the restricted product of the local groups
for all $x \in |X|$, etc.

Likewise, in general we have a group scheme
$$
G = X' \underset{\Gamma}\times \mathbb G,
$$
where $\Gamma$ is the group of deck transformations of $X'$, acting
on $\mathbb G$.

\medskip

Let $\mathbb T$ be a maximal split torus in $\mathbb G$. We associate
to it two lattices: the weight lattice $X^*(\mathbb T)$ of
homomorphisms $\mathbb T \to \mathbb G_m$ and the coweight lattice
$X_*(\mathbb T)$ of homomorphisms ${\mathbb G_m} \to \mathbb T$. They
contain the sets of roots $\Delta \subset X^*(\mathbb T)$ and coroots
$\Delta^\vee \subset X_*(\mathbb T)$ of $G$, respectively. The
quadruple $(X^*(\mathbb T),X_*(\mathbb T),\Delta,\Delta^\vee)$ is
called the root data for $\mathbb G$. It determines the split group
$\mathbb G$ up to an isomorphism.

The action of the group $\Gamma$ on $\mathbb G$ gives rise to its
action on the root data.

Let us now exchange the lattices of weights and coweights and the sets
of roots and coroots. Then we obtain the root data
$$
(X_*({\mathbb T}),X^*({\mathbb T}),\Delta^\vee,\Delta)
$$
of another reductive algebraic group over $\C$ (or any other
algebraically closed field, like $\ol{\mathbb Q}_\ell$) which is
denoted by $\check{\mathbb G}$. The action of $\Gamma$ on the root
data gives rise to its action on $\check{\mathbb G}$. We then define
the Langlands dual group of $G$ as
$$
\LG = \Gamma \ltimes \check{\mathbb G}.
$$
Note that $\Gamma$ is a finite quotient of the Galois group
$\on{Gal}(\ol{F}/F)$.

For example, if $G$ is a twisted torus described above, then $\LG =
\Z_2 \ltimes {\mathbb G}_m$, where $\Z_2$ acts on ${\mathbb G}_m$ by
the formula $x \mapsto x^{\pm 1}$.

There is a variant of the above definition in which $\Gamma$ is
replaced by $\on{Gal}(\ol{F}/F)$ (acting on the right factor through
the surjective homomorphism $\on{Gal}(\ol{F}/F) \to \Gamma$) or by the
Weil group $W(F)$. The definition may be generalized to an arbitrary
reductive group $G$ over $F$.

\medskip

Now the conjectural Langlands correspondence \eqref{LC n fun} takes
the following form:

\begin{equation}    \label{LC G fun}
\boxed{\begin{matrix} \text{homomorphisms} \\ W(F) \to
      \LG \end{matrix}}
\quad
    \longleftrightarrow \quad \boxed{\begin{matrix}
    \text{irreducible tempered automorphic} \\ \text{representations
    of } G({\mathbb A}_F) \end{matrix}}
\end{equation}

\bigskip

Note that if $G=GL_n$, then $\LG$ is also $GL_n$, and so the
homomorphisms $W(F) \to \LG$ appearing on the left hand side are the
same as $n$-dimensional representations of $W(F)$.

\medskip

\noindent {\bf Technical point.} We should consider here $\ell$-adic
homomorphisms, as in the case of $GL_n$. Also, to a homomorphism $W(F)
\to \LG$ in general corresponds not a single irreducible automorphic
representation of $G(\AD_F)$, but a set of such representations,
called an $L$-{\em packet}.\qed

\medskip

Under the correspondence \eqref{LC G fun}, the same kind of
compatibility between the Hecke and Frobenius eigenvalues should hold
as in the case of $GL_n$. The key point here (which comes from
Langlands' interpretation \cite{L} of the description of the spherical
Hecke algebra for general reductive groups due to Satake) is that the
Hecke eigenvalues of automorphic representations may be interpreted as
conjugacy classes in the Langlands dual group $\LG$ (see \secref{hecke
  eig} below and \cite{F:houches}, Section 5.2, for more details).

For $G=GL_n$ the Frobenius eigenvalues completely determine $\sigma$
and the Hecke eigenvalues completely determine an irreducible
automorphic representation $\pi$. All automorphic representations
occur with multiplicity 1 in $L_2(G(F) \bs G(\AD_F))$. For general
groups, this is not so. There may be several inequivalent
homomorphisms $\sigma: W(F) \to \LG$ (with the same collection of
Frobenius eigenvalues), all corresponding to the same $\pi$ (or the
same $L$-packet). In this case the multiplicity of $\pi$ is expected
to be greater than 1.

\section{The geometric Langlands correspondence} \label{GLC}

Now we wish to reformulate the Langlands correspondence in such a way
that it would make sense not only for curves defined over a
finite field, but also for curves over the complex field.

Thus, we need to find geometric analogues of the notions of Galois
representations and automorphic representations.

\subsection{$\LG$-bundles with flat connection}

The former is fairly easy. Let $X$ be a curve over a field $k$ and $F
= k(X)$ the field of rational functions on $X$. If $Y \to X$ is a
covering of $X$, then the field $k(Y)$ of rational functions on $Y$ is
an extension of the field $F = k(X)$ of rational functions on $X$, and
the Galois group $\on{Gal}(k(Y)/k(X))$ may be viewed as the group of
``deck transformations'' of the cover. If our cover is {\em
  unramified}, then this group is isomorphic to a quotient of the
(arithmetic) fundamental group of $X$. For a cover ramified at points
$x_1,\ldots,x_n$, it is isomorphic to a quotient of the (arithmetic)
fundamental group of $X \bs \{ x_1,\ldots,x_n \}$. From now on we will
focus on the {\em unramified case}. This means that we replace
$\on{Gal}(\ol{F}/F)$ by its maximal unramified quotient, which is
isomorphic to the (arithmetic) fundamental group of $X$. Its geometric
analogue, for $X$ defined over $\C$, is $\pi_1(X,x)$, with respect to
a reference point $x \in X$.

The choice of a reference point could be a subtle issue in
general. However, since these groups are isomorphic to each other for
different choices of the reference point, we obtain canonical
bijections between the sets of equivalence classes of homomorpisms
from these groups to $\LG$, which is what we are interested in
here. Henceforth we will suppress the reference point in our notation
and write simply $\pi_1(X)$.

Thus, the geometric counterpart of a (unramified) homomorphism
$\on{Gal}(\ol{F}/F) \to {}\LG$ is a homomorphism $\pi_1(X) \to
{}\LG$. If we replace $\on{Gal}(\ol{F}/F)$ by the Weil group $W(F)$,
then we should replace $\pi_1(X)$ by a similarly defined subgroup.

{}Let $X$ be a smooth projective connected algebraic
curve defined over $\C$. Let $G$ be a complex reductive algebraic
group and $\LG$ its Langlands dual group. Then homomorphisms $\pi_1(X)
\to {}\LG$ may be described in differential geometric terms as
(smooth) principal $\LG$-bundles on $X$ with a flat
connection. Indeed, the monodromy of the flat connection gives rise to
a homomorphism $\pi_1(X) \to {}\LG$, and this gives rise to an
equivalence of the appropriate categories and a bijection of the
corresponding sets of equivalence classes.

Let $E$ be a smooth principal $\LG$-bundle on $X$. A flat connection
on $E$ has two components. The $(0,1)$ component, with respect to the
complex structure on $X$, defines holomorphic structure on $E$, and
the $(1,0)$ component defines a holomorphic connection $\nabla$. Thus,
a principal $\LG$-bundle with a flat connection on $X$ is the same as
a pair $(E,\nabla)$, where $E$ is a holomorphic (equivalently,
algebraic) principal $\LG$-bundle on $X$ and $\nabla$ is a holomorphic
(equivalently, algebraic) connection on $E$.

Thus, for complex curves the objects on the left hand side of the
Langlands correspondence \eqref{LC G fun} should be the equivalence
classes of flat (holomorphic or algebraic) $\LG$-bundles $(E,\nabla)$.

\subsection{Sheaves on $\Bun_G$}

We consider next the right hand side of \eqref{LC G fun}. Here the
answer is not quite as obvious. We sketch it briefly referring the
reader to \cite{F:houches}, Section 3, for more details.

Recall that automorphic representations of $G(\AD_F)$ (where $F$ is a
function field of a curve $X$ defined over $\Fq$) are realized in
functions on the quotient $G(F) \bs G(\AD_F)$. Let us restrict
ourselves to those irreducible automorphic representations that
correspond to unramified homomorphisms $W(F) \to {}\LG$. It is known
that they contain a one-dimensional subspace stable under the subgroup
$G(\OO_F) \subset G(\AD_F)$, where
$$
\OO_F = \prod_{x \in |X|} \OO_x.
$$
These representation are also called {\em unramified}. Any vector in
the $G(\OO_F)$-stable line in such a
representation $\pi$ gives rise to a function on the double
quotient
\begin{equation}    \label{double quotient}
G(F) \bs G(\AD_F)/G(\OO_F).
\end{equation}
This function, which is called the {\em spherical function}, contain
all information about $\pi$, because the right translates by $g \in
G(\AD_F)$ of this function pulled back to $G(F)\backslash G(\AD_F)$
span $\pi$.

Now, a key observation, due to Andr\'e Weil, is that in the case of
$G=GL_n$ this double quotient is precisely the set of isomorphism
classes of rank $n$ bundles on our curve $X$. This statement is true
if the curve $X$ is defined over a finite field or the complex field.

For a general reductive group $G$ this double quotient is the set of
isomorphism classes of principal $G$-bundles on $X$ if $X$ is over
$\C$. This is true in Zariski, \'etale, or analytic topology.

If $X$ is defined over a finite field, the situation is more
subtle.\footnote{I thank Yevsey Nisnevich for a discussion of
  this issue.} Then the double quotient \eqref{double quotient} is the
set of equivalence classes of principal $G$-bundles in Zariski
topology as well as Nisnevich topology \cite{Nis1,Nis2}. In the
\'etale topology, this is true only if the group
$$
\on{Ker}^1(F,G) = \on{Ker}(H^1(F,G) \to \prod_{x\in |X|} H^1(F_x,G))
$$
is trivial. In this case, it is sometimes said that $G$ ``satisfies the
Hasse principle''. This holds, for example, in the case that $G$ is
semi-simple and split over $\Fq$, see \cite{Har} and \cite{BeDh},
Corollary 4.2. Otherwise, the set of equivalence classes of principal
$G$-bundles in the \'etale topology (equivalently, the {\em fppf}
topology) is a union over $\xi \in \on{Ker}^1(F,G)$ of double
quotients like \eqref{double quotient} in which $G(F)$ is replaced by
its form corresponding to $\xi$.

\medskip

{}From now on we will assume for simplicity that the Hasse principle
holds for $G$. Then the geometric analogues of unramified automorphic
representations should be some geometric objects that ``live'' on some
kind of moduli space of principal $G$-bundles on $X$.

If $G=GL_1$, the Picard variety is an algebraic variety that serves as
the moduli space of principal $G$-bundles on $X$, which are the same
as line bundles on $X$ in this case.

Unfortunately, for a non-abelian group $G$ there is no algebraic
variety whose set of $k$-points is the set of isomorphism classes of
principal $G$-bundles on $X$ (where $k$ is the field of definition of
$X$). The reason is that $G$-bundles have groups of automorphisms,
which vary from bundle to bundle (in the case of $GL_1$-bundles, the
group of automorphisms is the same for all bundles; it is the
multiplicative group acting by rescalings). However, there is an {\em
  algebraic stack} that parametrizes principal $G$-bundles on $X$. It
is denoted by $\Bun_G$. It is not an algebraic variety, but it looks
locally like the quotient of an algebraic variety by the action of an
algebraic group. These actions are not free, and therefore the
quotient is no longer an algebraic variety. However, the structure of
the quotient allows us to define familiar objects on it. For instance,
a coherent sheaf on the quotient stack $Y/H$ of this kind is nothing
but an $H$-equivariant coherent sheaf on $Y$. It turns out that this
is good enough for our purposes.

In the classical story, when $X$ is defined over $\Fq$, an unramified
automorphic representation may be replaced by a non-zero spherical
function (which is unique up to a scalar) on the above double quotient
which is the set of $\Fq$-points of $\Bun_G$. Hence in the geometric
theory we need to find geometric analogues of these functions.

According to Grothen\-dieck's philosophy, the ``correct'' geometric
counterpart of the notion of a function on the set of $\Fq$-points of
$V$ is the notion of an {\em $\ell$-adic sheaf} on $V$. We will not
attempt to give a precise definition here, referring the reader to
\cite{Milne,Weil}. Let us just say that the simplest example of an
$\ell$-adic sheaf is an $\ell$-adic local system, \index{local
  system!$\ell$-adic} which is, roughly speaking, a compatible system
of locally constant $\Z/\ell^n \Z$-sheaves on $V$ for $n\geq 1$ (in
the \'etale topology).

The important property of the notion of an $\ell$-adic sheaf $\F$ on
$V$ is that for any morphism $f: V' \to V$ from another variety $V'$
to $V$ the group of symmetries of this morphism will act on the
pull-back of $\F$ to $V'$. In particular, let $x$ be an $\Fq$-point of
$V$ and $\ol{x}$ the $\ol{{\mathbb F}}_q$-point corresponding to an
inclusion $\Fq \hookrightarrow \ol{{\mathbb F}}_q$. Then the pull-back
of $\F$ with respect to the composition $\ol{x} \to x \to V$ is a
sheaf on $\ol{x}$, which is nothing but the fiber $\F_{\ol{x}}$ of
$\F$ at $\ol{x}$, which is a $\ol\Q_\ell$-vector space. But the Galois
group $\on{Gal}(\ol{{\mathbb F}}_q/\Fq)$ is the symmetry of the map
$\ol{x} \to x$, and therefore it acts on $\F_{\ol{x}}$.

Let $\on{Fr}_{\ol{x}}$ be the (geometric) Frobenius element, which is
the inverse of the automorphism $y \mapsto y^q$ of $\ol{{\mathbb
    F}}_q$. It is a generator of $\on{Gal}(\ol{{\mathbb F}}_q/\Fq)$
and hence acts on $\F_{\ol{x}}$. Taking the trace of
$\on{Fr}_{\ol{x}}$ on $\F_{\ol{x}}$, we obtain a number
$\on{Tr}(\on{Fr}_{\ol{x}},{\mc F}_{\ol{x}})$, which we will also
denote by $\on{Tr}(\on{Fr}_{x},{\mc F}_{x})$.

Hence we obtain a function on the set of ${\mathbb F}_q$-points of
$V$. One assigns similarly a function to a complex of $\ell$-adic
sheaves, by taking the alternating sums of the traces of
$\on{Fr}_{\ol{x}}$ on the stalk cohomologies of $\K$ at $\ol{x}$.  The
resulting map intertwines the natural operations on complexes of
sheaves with natural operations on functions (see \cite{Laumon:const},
Section 1.2). For example, pull-back of a sheaf corresponds to the
pull-back of a function, and push-forward of a sheaf with compact
support corresponds to the fiberwise integration of a function.

Thus, because of the existence of the Frobenius automorphism in the
Galois group $\on{Gal}(\ol{{\mathbb F}}_q/{\mathbb F}_{q})$ (which is
the group of symmetries of an $\Fq$-point) we can pass from
$\ell$-adic sheaves to functions on any algebraic variety over $\Fq$.
This suggests that the proper geometrization of the notion of a
function in this setting is the notion of $\ell$-adic sheaf.

The naive abelian category of $\ell$-adic sheaves is not a good choice
for various reasons; for instance, it is not stable under the Verdier
duality. The correct choice turns out to be another abelian category
of the so-called {\em perverse sheaves}. These are actually complexes
of $\ell$-adic sheaves on $V$ satisfying certain special
properties. Examples are $\ell$-adic local systems on a smooth variety
$V$, placed in cohomological degree equal to $-\dim V$. General
perverse sheaves are ``glued'' from such local systems defined on the
strata of a particular stratification of $V$ by locally closed
subvarieties.

Experience shows that many ``interesting'' functions on the set
$V({\mathbb F}_{q})$ of points of an algebraic variety $V$ over $\Fq$
come from perverse sheaves ${\mc K}$ on $V$. Hence it is natural to
expect that unramified automorphic functions on $$G(F)\bs
G(\AD_F)/G(\OO_F),$$ which is the set of $\Fq$-points of
$\Bun_G$, come from perverse sheaves on $\Bun_G$.

The concept of perverse sheaf makes perfect sense for varieties over
$\C$ as well, and this allows us to formulate the geometric Langlands
conjecture when $X$ (and hence $\on{Bun}_n$) is defined over $\C$. And
over the field of complex numbers there is one more reformulation that
we can make; namely, we can pass from perverse sheaves to $\D$-{\em
  modules}.

Recall (see, e.g., \cite{KS,GM}) that a ${\mc D}$-module on
a smooth algebraic variety $Z$ is a sheaf of modules over the sheaf
${\mc D}_Z$ of differential operators on $Z$. An example of a ${\mc
D}$-module is the sheaf of sections of a flat vector bundle on
$Z$. The sheaf of functions on $Z$ acts on sections by multiplication,
so it is an $\OO_Z$-module. But the flat connection also allows us to
act on sections by vector fields on $Z$. This gives rise to an action
of the sheaf ${\mc D}_Z$, because it is generated by vector fields and
functions. Thus, we obtain the structure of a ${\mc D}$-module.

In our case, $\Bun_G$ is not a variety, but an algebraic stack.  The
suitable (derived) category of ${\mc D}$-modules on it has been
defined in \cite{BD}.

${\mc D}$-modules on $\Bun_G$ will be the objects that we will
consider as the replacements for the unramified spherical functions in
the complex case.

\subsection{Hecke functors: examples}    \label{hecke functors}

There is more: an unramified spherical function attached to an
unramified automorphic representation has a special property; it is an
eigenfunction of the Hecke operators. These are integral operators
that are cousins of the classical Hecke operators one studies in the
theory of modular forms. The eigenvalues of these operators are
precisely what we referred to earlier as ``Hecke eigenvalues''. For a
general automorphic representation, these are defined for all but
finitely many closed points of $X$. But for the unramified automorphic
representations they are defined for all points. In this case the
Hecke operators may be defined as integral operators acting on the
space of functions on the set of $\Fq$-points of $\Bun_G$, if the
curve $X$ is defined over $\Fq$.

The ${\mc D}$-modules on $\Bun_G$ we are looking for, in the case that
$X$ is defined over $\C$, should reflect this Hecke property.

The analogues of the Hecke operators are now the so-called {\em Hecke
  functors} acting on the derived category of ${\mc D}$-modules on
$\Bun_G$. They are labeled by pairs $(x,V)$, where $x \in X$ and $V$
is a finite-dimensional representation of the dual group $\LG$, and
are defined using certain modifications of $G$-bundles.

Before giving the general definition, consider two examples. First,
consider the abelian case with $G = GL_1$ (thus, we have $G(\C) =
\C^\times$). In this case $\Bun_G$ may be replaced by the Picard
variety $\on{Pic}$ which parametrizes line bundles on $X$. Given a
point $x \in X$, consider the map $p'_x: \on{Pic} \to \on{Pic}$
sending a line bundle ${\mc L}$ to ${\mc L}(x)$ (the line bundle whose
sections are sections of ${\mc L}$ which are allowed to have a pole of
order $1$ at $x$). By definition, the Hecke functor ${\mathbb
  H}_{1,x}$ corresponding to $x$ and $1 \in \Z$ (which we identify
with the set of one-dimensional representations of $\LG = GL_1$) is
given by the formula
$$
{\mathbb H}_{1,x}({\mc F}) = p'_x{}^*({\mc F}).
$$

Next, consider the case of $G=GL_n$ and $V=V_{\check\omega_1}$, the
defining $n$-dimensional representation of $\LG = GL_n$. In this case
$\Bun_{GL_n}$ is the moduli stack $\Bun_n$ of rank $n$ bundles on
$X$. There is an obvious analogue of the map sending a rank $n$ bundle
${\mc M}$ to ${\mc M}(x)$. But then the degree of the bundle jumps by
$n$. It is possible to increase it by $1$, but we need to choose a
line $\ell$ in the fiber of ${\mc M}$ at $x$. We then define a new
rank $n$ bundle ${\mc M}'$ by saying that its sections are the
sections of ${\mc M}$ having a pole of order $1$ at $x$, but the polar
part has to belong to $\ell$. Then $\on{deg} {\mc M}' = \on{deg} {\mc
  M} + 1$. However, we now have a ${\mathbb P}^{n-1}$ worth of
modifications of ${\mc M}$ corresponding to different choices of the
line $\ell$. The Hecke functor ${\mathbb H}_{V_{\check\omega_1,x}}$ is
obtained by ``integrating'' over all of them.

More precisely, let ${\mc H}_{\check\omega_1,x}$ be the moduli
stack of pairs $(\M,\M')$ as above. It defines a correspondence over
$\on{Bun}_n \times \on{Bun}_n$:
\begin{equation}    \label{Hecke cor}
\begin{array}{ccccc}
& & {\mc H}_{\check\omega_1,x} & & \\
& \stackrel{p_x}\swarrow & & \stackrel{p'_x}\searrow & \\
\Bun_n & & & & \Bun_n
\end{array}
\end{equation}
By definition,
\begin{equation} \label{formula H1}
{\mathbb H}_{\check\omega_1,x}({\mc
    F}) = p_{x*} \; p'_x{}^*({\mc F}).
\end{equation}

\subsection{Hecke functors: general definition}   \label{gen hecke}

For irreducible representations $\rho_{\mu}$ of $\LG$ with general
dominant integral highest weights $\mu$ there is an analogous
correspondence in which the role of the projective space ${\mathbb
  P}^{n-1}$ is played by the Schubert variety in the affine
Grassmannian of $G$ corresponding to $\mu$.

We explain this in the split case (so that $G=\GG$ and
$\LG=\check\GG$). First, observe that if we have two $G$-bundles
$E,E'$ on the (formal) disc $\on{Spec} k[[t]]$ which are identified
over the punctured disc $\on{Spec} k\ppart$, we obtain a point in the
double quotient
$$G[[t]]\bs G\ppart/G[[t]],$$ or, equivalently, a $G[[t]]$-orbit
in the affine Grassmannian $$\Gr = G\ppart/G[[t]]$$ which is an
ind-scheme over $k$ \cite{BD,MV}. These orbits are called Schubert
cells, and they are labeled by elements $\mu$ the set $\Xb_+$ of
dominant weights of the maximal torus in the dual group $\LG$. We
denote the orbit corresponding to $\mu$ by $\Gr_\mu$. We will write
$\inv(E,E')=\mu$ if the pair $(E,E')$ belongs to $\Gr_\mu$. Note
that $\Gr_{\mu'}$ is contained in the closure $\ovl\Gr_\mu$ of
$\Gr_\mu$ if and only if $\mu\geq\mu'$.

Following Beilinson and Drinfeld \cite{BD}, introduce the {\em Hecke
  stack} ${\mc H} = \Hc^\BD(X,G)$ that classifies quadruples
$$(x,E,E',\phi),$$ where $x\in X$, $E,E'\in \Bun_{G}$ and $\phi$ is
an isomorphism
$$
E|_{X{-}\{x\}}\simeq E'|_{X{-}\{x\}}.
$$
We have two natural morphisms $p, p': {\mc H}^{\BD} \to \Bun_{G}$
sending such a quadruple to $E$ or $E'$ and the morphism $s: {\mc H}
\to X$. Since $\Bun_{G}$ is an algebraic stack, so is
$\Hc^\BD(X,G)$. However, if we fix $E'$, then we obtain an
ind-scheme over $X$, which is called the {\em Beilinson-Drinfeld
  Grassmannian} (see \cite{BD,MV}).

Let ${\mc H}^{\BD}{}'(X,G)$ be the stack classifying the quadruples
$$(x,E,E',\phi),$$ where $x\in X$, $E \in \Bun_{G}$, $E'$ is a
$G$-bundle on the disc $D_x$ around the point $x$, and $\phi$ is
an isomorphism
$$
E|_{D_x^\times}\simeq E'|_{D_x^\times},
$$
where $D_x^\times$ is the punctured disc around $x$. We have a
natural morphism $${\mc H}^{\BD}(X,G) \to {\mc H}^{\BD}{}'(X,G)$$
(restricting $E'$ to $D_x$ and $\phi$ to $D_x^\times$), which is in
fact an isomorphism, according to a strong version of a theorem of
Beauville--Laszlo \cite{BL} given in \cite{BD}, Section 2.3.7. Therefore
we obtain that a morphism $$s \times p: {\mc H}^{\BD}(X,G) \to X
\times \Bun_{G}$$ sending the above quadruple to $(x,E)$ is a
locally trivial fibration with fibers isomorphic to the affine
Grassmannian $\Gr=G\ppart/G[[t]]$.

For every dominant integral weight $\mu \in \Xb_+$ we define the
closed substack $\Hc_\mu$ of ${\mc H}^{\BD}(X,G)$ by imposing the
inequality
\begin{equation} \label{mu}
\inv_x(E,E')\leq \mu.
\end{equation}
It is a scheme over $X\times \Bun_{G}$ with fibers isomorphic
to $\ovl\Gr_\mu$.

\medskip

Recall the {\em geometric Satake correspondence} \cite{MV}, which is
an equivalence of tensor categories between the category of
finite-dimensional representations of $\LG$ and the category of
$G[[t]]$-equivariant perverse sheaves on $\Gr$ (see \cite{F:houches},
Sects. 5.4--5.6 for an exposition). It sends the irreducible
finite-dimensional representation $\rho=\rho_\mu$ of $\LG$ to the
irreducible perverse sheaf $\IC(\ol\Gr_\mu)$ supported on
$\ol\Gr_\mu$.

Let ${\mc K}_\rho$ by the perverse sheaf on ${\mc H}^{\BD}(X,G)$,
supported on $\Hc_\mu$, which is constant along $X\times \Bun_G$ with
the fibers isomorphic to $\IC(\ol\Gr_\mu)$.

We now define the {\em Hecke functor} ${\mathbb H}_\rho={\mathbb
  H}_\mu$ as the integral transform corresponding to the kernel ${\mc
  K}_\rho$ (see \cite{BD}):
$$
{\mathbb H}_\rho({\mc F}) = (s \times p)_*(p'{}^*({\mc F}) \otimes
{\mc K}_\rho).
$$

For $x \in |X|$, let $\Hc_x$ be the fiber of $\Hc$ over $x$, and
$p_x,p'_x: \Hc_x \to \Bun_G$ the corresponding morphisms. Denote by
${\mc K}_{\rho,x}$ the restriction of ${\mc
K}_\rho$ to $\Hc_x$. Define the functor ${\mathbb H}_{\rho,x}$ by the
formula
$$
{\mathbb H}_{\rho,x}({\mc F}) = p_{x*}(p'_x{}^*({\mc F}) \otimes {\mc
  K}_{\rho,x}).
$$

If $X$ is defined over $\Fq$, then one can show that the function
corresponding to the sheaf ${\mc K}_{\rho,x}$ via the Grothendieck
dictionary is the kernel $K_{\rho,x}$ of the Hecke operator
corresponding to $\rho$ and $x$ (see \cite{F:houches},
Section 5.4). Therefore the functor ${\mathbb H}_{\rho,x}$ is a
geometric analogue of the Hecke operator ${\mb H}_{\rho,x}$.

\subsection{Hecke eigensheaves}    \label{hecke eig}

Let ${\mc E}$ be a flat $\LG$-bundle on $X$. Then
$$
\rho_{\mc E} = {\mc E} \underset{\LG}\times \rho
$$
is a flat vector bundle on $X$, hence a ${\mc D}$-module on $X$. The
following definition is due to \cite{BD}.

\begin{definition}
  A sheaf ${\mc F}$ on $\Bun_G$ is called a {\em Hecke eigensheaf with
    the eigenvalue ${\mc E}$} if for any representation $\rho$ of
  $\LG$ we have an isomorphism
\begin{equation}    \label{iso}
{\mathbb H}_\rho({\mc F}) \simeq  \rho_{\mc E} \boxtimes
{\mc F},
\end{equation}
and these isomorphisms are compatible for different $\rho$ with
respect to the structures of tensor categories on both sides.
\end{definition}

\medskip

By base change, it follows from the above identity that for every
$x\in X$, we have an isomorphism
\begin{equation}    \label{isox}
{\mathbb H}_{\rho,x}({\mc F}) \simeq \rho \otimes
{\mc F}.
\end{equation}

If our curve $X$ is defined over $\Fq$, we can pass from a Hecke
eigensheaf ${\mc F}$ on $\Bun_G$ to a function $f$ on $G(F) \bs
G(\AD_F)/G(\OO_F)$. Then this function will be an eigenfunction of the
Hecke operators ${\mb H}_{\rho,x}$:
\begin{equation}    \label{Hecke ef}
{\mb H}_{\rho,x}(f) = h_{\rho,x} f,
\end{equation}
According to the {\em Satake isomorphism} (see, e.g.,
\cite{F:houches}, Section 5.4), the map $$[\rho] \mapsto {\mb
  H}_{\rho,x},$$ where $\rho$ runs over all finite-dimensional
representations of $\LG$, defines an isomorphism between the
representation ring $\Rep \LG$ of $\LG$ and the spherical Hecke
algebra generated by ${\mb H}_{\rho,x}$. Hence the collection of
eigenvalues $\{ h_{\rho,x} \}$ defines a point in the spectrum of
$\on{Rep} \LG$, that is, a semi-simple conjugacy class $\nu_x$ in
$\LG$.

If the (unramified) automorphic representation $\pi$ generated by the
spherical function $f$ corresponds to a homomorphism $\sigma: W(F) \to
\LG$ under the Langlands correspondence \eqref{LC G fun}, then $\nu_x
= \sigma(\on{Fr}_x)$, where $\on{Fr}_x$ is the Frobenius conjugacy
class associated to $x$ (see \cite{F:houches}, Section 2.2). In other
words, in this case
\begin{equation}    \label{hx}
h_{\rho,x} = \on{Tr}(\sigma(\on{Fr}_x),\rho)
\end{equation}
(up to a power of $q$). A general automorphic representation $\pi$
would be ramified at finitely many points $x \in |X|$. Then this
condition would only be satisfied away from those points. This is the
precise meaning of the ``matching'' between the Hecke and Frobenius
eigenvalues that we mentioned above.

\subsection{Geometric Langlands correspondence}

Now we can state the geometric Langlands correspondence as the
following diagram:

\begin{equation}    \label{LC}
\boxed{\begin{matrix} \text{flat} \\
    \LG\text{-bundles on } X \end{matrix}} \quad
    \longrightarrow \quad \boxed{\begin{matrix}
    \text{Hecke eigensheaves} \\ \text{on } \Bun_G \end{matrix}}
\end{equation}

$$
{\mc E} \quad \longrightarrow \quad {\mc F}_{\mc E}.
$$

This correspondence has been constructed in many cases. For $G=GL_n$
the Hecke eigensheaves corresponding to irreducible ${\mc E}$ have
been constructed in \cite{FGV,Ga}, building on the work of P. Deligne
for $n=1$ (explained in \cite{Laumon:duke} and \cite{F:houches}),
V. Drinfeld \cite{Dr1} for $n=2$, and G. Laumon \cite{Laumon:duke}
(this construction works for curves defined both over $\Fq$ or $\C$).

For any split simple algebraic group $G$ and $X$ defined over $\C$,
the Hecke eigensheaves have been constructed in a different way by
A. Beilinson and V. Drinfeld \cite{BD} in the case that ${\mc E}$ has
an additional structure of an {\em oper} (this means that ${\mc E}$
belongs to a certain half-dimensional locus in $\Loc_{\LG}$). It is
interesting that this construction is closely related to the 2D
Conformal Field Theory and representation theory of affine Kac--Moody
algebras of critical level. For more on this, see Part III of
\cite{F:houches}.

\subsection{Categorical version}    \label{cat ver}

Looking at the correspondence \eqref{LC}, we notice that there is an
essential asymmetry between the two sides. On the left we have flat
$\LG$-bundles, which are points of a moduli stack $\Loc_{\LG}$ of
flat $\LG$-bundles (or local systems) on $X$. But on the right we
have Hecke eigensheaves, which are objects of a category; namely, the
category of ${\mc D}$-modules on $\Bun_G$. Beilinson and Drinfeld have
suggested how to formulate it in a more symmetrical way.

The idea is to replace a point ${\mc E} \in \Loc_{\LG}$ by an object
of another category; namely, the skyscraper sheaf ${\mc O}_{\mc E}$ at
${\mc E}$ viewed as an object of the category of coherent ${\mc
O}$-modules on $\Loc_{\LG}$. A much stronger, categorical, version of
the geometric Langlands correspondence is then a conjectural
equivalence of derived categories\footnote{\label{foot} It is expected
(see \cite{FW}, Section 10) that there is in fact a $\Z_2$-gerbe of such
equivalences. This gerbe is trivial, but not canonically
trivialized. One gets a particular trivialization of this gerbe, and
hence a particular equivalence, for each choice of the square root of
the canonical line bundle $K_X$ on $X$.}

\begin{equation}    \label{na fm}
\boxed{\begin{matrix} \text{derived category of} \\
    \OO\text{-modules on } \on{Loc}_{\LG} \end{matrix}} \quad
    \longleftrightarrow \quad \boxed{\begin{matrix}
    \text{derived category of} \\ \D\text{-modules on } \Bun_G
    \end{matrix}}
\end{equation}

\bigskip

This equivalence should send the skyscraper sheaf ${\mc O}_{\mc E}$ on
$\on{Loc}_{\LG}$ supported at ${\mc E}$ to the Hecke eigensheaf ${\mc
F}_E$. If this were true, it would mean that Hecke eigensheaves
provide a good ``basis'' in the category of $\D$-modules on $\Bun_G$,
so we would obtain a kind of spectral decomposition of the derived
category of $\D$-modules on $\Bun_G$, like in the Fourier
transform. (Recall that under the Fourier transform on the real line
the delta-functions $\delta_x$, analogues of ${\mc O}_{\mc E}$, go to
the exponential functions $e^{itx}$, analogues of ${\mc F}_{\mc E}$.)

This equivalence has been proved by G. Laumon \cite{Laumon} and
M. Rothstein \cite{Rothstein} in the abelian case, when $G=GL_1$ (or a
more general torus). They showed that in this case this is nothing but
a version of the Fourier--Mukai transform. Thus, the categorical
Langlands correspondence may be viewed as a kind of non-abelian
Fourier--Mukai transform (see \cite{F:houches}, Section 4.4).

In the non-abelian case, this has not yet been made into a precise
conjecture in the literature.\footnote{After I presented these
  Colloquium Lectures, the paper \cite{AG} appeared in which a precise
  formulation of this conjecture was proposed.} Nevertheless, the
diagram \eqref{na fm} gives us a valuable guiding principle to the
geometric Langlands correspondence. In particular, it gives us a
natural explanation as to why the skyscraper sheaves on $\Loc_{\LG}$
should correspond to Hecke eigensheaves.

The point is that on the category of ${\mc O}$-modules on $\Loc_{\LG}$
we also have a collection of functors ${\mathbb W}_{V}$, parametrized by the
same data as the Hecke functors ${\mathbb H}_{V}$. We will call them
the {\em Wilson functors} (because of the close connection between
them and the Wilson line operators in 4D gauge theory). These functors
act from the category of $\OO$-modules on $\on{Loc}_{\LG}$ to the
category of sheaves on $X \times \on{Loc}_{\LG}$, which are ${\mc
  D}$-modules along $X$ and $\OO$-modules along $\on{Loc}_{\LG}$.

To define them, observe that we have a tautological $\LG$-bundle
${\mc T}$ on $X \times \on{Loc}_{\LG}$, whose restriction to $X
\times {\mc E}$, where ${\mc E} = (E,\nabla)$, is $E$. Moreover,
$\nabla$ gives us a partial connection on ${\mc T}$ along $X$. For a
representation $V$ of $\LG$, let ${\mc T}_V$ be the associated vector
bundle on $X \times \on{Loc}_{\LG}$, with a connection along $X$.

Let $p_2: X \times \on{Loc}_{\LG} \to \on{Loc}_{\LG}$ be the projection
onto the second factor. By definition,
\begin{equation}    \label{wilson}
{\mathbb W}_V({\mc F}) = {\mc T}_V \otimes p_2^*({\mc F})
\end{equation}
(note that by construction ${\mc T}_V$ carries a connection along $X$
and so the right hand side really is a ${\mc D}$-module along $X$).

Now, the conjectural equivalence \eqref{na fm} should be compatible
with the Wilson/Hecke functors in the sense that
\begin{equation}    \label{WH}
C({\mathbb W}_V) \simeq {\mathbb H}_V, \qquad V
\in \on{Rep} {}\LG,
\end{equation}
where $C$ denotes this equivalence (from left to right).

In particular, observe that the skyscraper sheaf $\OO_{\mc E}$ at
${\mc E} \in \on{Loc}_{\LG}$ is obviously an eigensheaf of the Wilson
functors:
$$
{\mathbb W}_V(\OO_{\mc E}) = V_{\mc E} \boxtimes \OO_{\mc E}.
$$
Indeed, tensoring a skyscraper sheaf with a vector bundle is
the same as tensoring it with the fiber of this vector bundle at the
point of support of this skyscraper sheaf. Therefore \eqref{WH}
implies that ${\mc F}_{\mc E} = C(\OO_{\mc E})$ must satisfy the
Hecke property \eqref{iso}. In other words, ${\mc F}_{\mc E}$ should
be a Hecke eigensheaf on $\Bun_G$ with eigenvalue ${\mc E}$. Thus, we
obtain a natural explanation of the Hecke property of ${\mc F}_{\mc
  E}$: it follows from the compatibility of the categorical Langlands
correspondence \eqref{na fm} with the Wilson/Hecke functors.

Thus, the conjectural equivalence \eqref{na fm} gives us a natural and
convenient framework for the geometric Langlands correspondence.

The equivalence \eqref{na fm} also arises in the study of S-duality
of the maximally 4D supersymmetric gauge theories with the gauge
groups being the compact forms of $G$ and $\LG$. As shown by Kapustin
and Witten \cite{KW} (see \cite{F:bourbaki} for an exposition), the
S-duality of boundary conditions in these theories yields an
equivalence closely related to \eqref{na fm}, in which the category of
${\mc D}$-modules on $\Bun_G$ is replaced by the category of
$A$-branes on the cotangent bundle of $\Bun_G$.

\section{Langlands Functoriality and Trace Formula}    \label{FTF}

The Langlands correspondence \eqref{LC G fun} is subtle, because it
involves objects from two different worlds: automorphic
representations and Galois representations. However, there is a
closely related correspondence that may be formulated entirely in the
world of automorphic representations.

\subsection{The Langlands Functoriality Principle}    \label{LFC}

Let $G$ and $H$ be two reductive algebraic groups over the function
field $F$ of smooth projective curve $X$ over a finite field, and
assume that $G$ is quasi-split (that is, contains a Borel subgroup
defined over $F$). Let $\LG$ and $\LH$ be their Langlands dual
groups and
$$
a: \LH \to \LG
$$
a homomorphism between them that induces the identity on the Galois
parts. Such homomorphisms are called {\em admissible}.

Given a homomorphism $\sigma: W(F) \to \LH$, we obtain a homomorphism
$a \circ \sigma: W(F) \to \LG$, and hence a natural map of sets of
equivalence classes

\medskip

$$
\begin{matrix}
\boxed{\begin{matrix} \text{homomorphisms} \\ W(F) \to
  \LH \end{matrix}}
\\ \\
    \downarrow \\ \\
\boxed{\begin{matrix} \text{homomorphisms} \\ W(F) \to
    \LG \end{matrix}}
\end{matrix}
$$

\medskip

Taking into account the Langlands correspondence \eqref{LC G fun} for
the group $G$ and the one for $H$, we conclude that to each $L$-packet
of tempered automorphic representations of $H(\AD_F)$ should
correspond an $L$-packet of automorphic representations of
$G(\AD_F)$. In fact, there should be a map

\medskip

\begin{equation}    \label{funct pr}
\begin{matrix}
\boxed{\begin{matrix}
\text{{\em L}-packets of automorphic} \\ \text{representations
    of } H({\mathbb A}_F) \end{matrix}} \\ \\
    \downarrow \\ \\
\boxed{\begin{matrix}
\text{{\em L}-packets of automorphic} \\ \text{representations
    of } G({\mathbb A}_F) \end{matrix}}
\end{matrix}
\end{equation}

\medskip

It is called the {\em Langlands Functoriality} or {\em transfer} of
automorphic representations.

The existence of such a map is non-trivial and surprising, because
even though we have a homomorphism of dual groups $a: \LH \to \LG$,
there is {\em a priori} no connection between the groups $G$ and $H$.

\medskip

These transfers should have the following associativity property: if
$M$ is another reductive group and we have a chain of homomorphisms:
$$
{{}^L\negthinspace M} \to \LH \to \LG,
$$
then the two transfers of automorphic representations from $M(\AD_F)$
to $G(\AD_F)$, one obtained as the composition of the transfers from
$M(\AD_F)$ to $H(\AD_F)$ and from $H(\AD_F)$ to $G(\AD_F)$, and the
other obtained directly from the composition ${{}^L\negthinspace M}
\to \LG$, should coincide.

In addition, we require that under the transfer the Hecke
eigenvalues of automorphic representations should match, in
the following sense. Recall from \secref{hecke eig} that the
eigenvalues of the Hecke operators assign to an automorphic
representation of $G(\AD_F)$ a collection of conjugacy classes
$(\nu_x)$, for all but finitely many $x \in |X|$, in the Langlands
dual group $\LG$. It is known that for all automorphic representations
$\pi_j$ of $G(\AD_F)$ that belong to a given $L$-packet $\{ \pi_j \}$
these conjugacy classes are the same.

Now, let $\{ \pi_i^H \}$ be an $L$-packet of automorphic
representations of $H(\AD_F)$ and $(\nu_x^H)$ the collection of
$\LH$-conjugacy classes assigned to it. Let $\{ \pi_j^G \}$ be the
$L$-packet of automorphic representations which is the transfer of $\{
\pi_i^H \}$ under \eqref{funct pr}, and $(\nu_x^G)$ the collection of
$\LG$-conjugacy classes assigned to it. Then they should be defined
for the same $x \in |X|$ as the $\nu_x^H$ and for each of these $x \in
|X|$ we must have
$$
\nu_x^G = a(\nu_x^H).
$$

Functoriality has been established in some cases, but is unknown in
general (see \cite{Arthur:funct} for a survey).

\subsection{Geometric Functoriality}

Given a homomorphism $\LH \to \LG$, we obtain a natural morphism of
algebraic stacks $\Loc_{\LH} \to \Loc{\LG}$ and hence a natural
functor (direct image) from the category of $\OO$-modules on
$\Loc_{\LH}$ to that on $\Loc_{\LG}$. Hence, in view of the
categorical Langlands correspondence \eqref{na fm}, we should also
have a functor from the derived category of ${\mc D}$-modules on
$\Bun_H$ to that on $\Bun_G$, making the following diagram
commutative:

\medskip

$$
\begin{CD}
\boxed{\OO\text{-modules on } \Loc_{\LH}} @>>> \boxed{{\mc
    D}\text{-modules on } \Bun_H} \\
@VVV @VVV \\
\boxed{\OO\text{-modules on } \Loc_{\LG}} @>>> \boxed{{\mc
    D}\text{-modules on } \Bun_G}
\end{CD}
$$

\medskip

The right vertical arrow is the ``geometric functoriality''
functor. Examples have been constructed in \cite{Lys1,Lys2} (see also
\cite{LL}) using a geometric version of the theta-correspon\-dence. In
\cite{FW} the geometric functoriality for endoscopic groups was
analyzed using the Mirror Symmetry approach to the geometric Langlands
correspondence (in this setting the category of ${\mc D}$-modules on
$\Bun_G$ is replaced by the category of $A$-branes on the cotangent
bundle of $\Bun_G$).

\subsection{Non-tempered representations}    \label{arthur funct}

In \cite{L:BE,L:PR} Langlands proposed a strategy for proving the
Functoriality Conjecture which is based on the use of the {\em trace
  formula}. This was further developed in \cite{FLN} and
\cite{L:ST}. In order to explain this, we need to discuss first the
non-tempered representations and a modified version of the
functoriality transfer. Then, in the next subsection, we will
introduce the trace formula, and in Sections \ref{str}--\ref{decomp
  trace} explain the strategy of \cite{FLN} to use the trace formula
to prove the Functoriality Conjecture.

Recall that according to the Langlands correspondence (formula
\eqref{LC G fun}), the $L$-packets of tempered automorphic
representations are supposed to be parametrized by the equivalence
classes of homomorphisms $W(F) \to \LG$. According to the conjectures
of Arthur, for general automorphic representations of $G(\AD_F)$ the
homomorphisms $W(F) \to \LG$ should be replaced by the {\em Arthur
  parameters}: equivalence classes of homomorphisms
\begin{equation}    \label{Arthur parameter}
\sigma: \SL_2 \times W(F) \to {}\LG
\end{equation}
that induce the canonical map $W(F) \to \on{Gal}(\ol{F}/F) \to
\Gamma$.

If an irreducible automorphic representation $\pi$ is tempered, then
$\sigma|_{\SL_2}$ is trivial. Those $\sigma$ for which
$\sigma|_{\SL_2}$ is non-trivial, correspond to non-tempered
automorphic representations.

If an automorphic representation $$\pi = \bigotimes_{x \in |X|}{}' \;
\pi_x$$ of the adelic group $G(\AD_F)$ has the Arthur parameter
$\sigma$, then for all closed points $x \in |X|$ where $\pi_x$ is
unramified the restriction $\sigma|_{W(F)}$ is also unramified and the
conjugacy class $\nu_x$ in $\LG$ corresponding to $\pi_x$ is the
conjugacy class of
\begin{equation}    \label{matching arthur}
\sigma\left(\begin{pmatrix} q^{1/2} & 0 \\ 0 & q^{-1/2} \end{pmatrix}
\times \on{Fr}_x \right) \in {}\LG.
\end{equation}

Let $\sigma$ be an Arthur parameter. We attach to it two subgroups of
$\LG$: $^\la G = {}^\la G_\sigma$ is the centralizer of the image of
$\SL_2$ in $\LG$ under $\sigma$, and $^\la H = {}^\la H_\sigma$ is
the Zariski closure of the image of $W(F)$ in $^\la G_\sigma$ under
$\sigma$.

\medskip

\noindent {\bf Technical point.} The group $^\la H$ may not be a
Langlands dual group, but the image of an admissible homomorphism $\LH
\to \LG$ (so that we have a surjection $\LH \to {}^\la H$). As
explained on \cite{L:BE}, Sections 1.4 and 1.7, we may enlarge $^\la H$
by a central torus to get a Langlands dual group $\LH$. In what follows
we will ignore this issue.\qed

\medskip

The idea of Langlands \cite{L:BE} (see also \cite{FLN,L:IAS}) is to
assign (bypassing Arthur parameters) to each irreducible automorphic
representations $\pi$ of $G(\AD_F)$ (more precisely, an $L$-packet)
directly the data of
\begin{equation}    \label{phi}
\phi: \SL_2 \times \LH \to \SL_2 \times {}^\la H \to \LG,
\end{equation}
and a tempered irreducible automorphic representation $\pi_H$ (or an
$L$-packet) of the group $H(\AD_F)$ whose dual group is $\LH$, which
Langlands calls {\em hadronic}. Note that the same $\pi$ might
correspond to several inequivalent homomorphisms $\phi$, but this is
expected to be related to the multiplicity of $\pi$ in the space of
automorphic functions.

{}From this point of view, $\pi$ should be thought of as a transfer of
$\pi_H$ with respect to $\phi$, so we obtain a more convenient notion
of transfer for non-tempered representations which explicitly involves
Arthur's group $\SL_2$ (note that if we restrict ourselves to a
temptered automorphic representation $\pi$ of $G$, then we expect
$\phi|_{\SL_2}$ to be trivial and therefore this transfer should agree
with the transfer discussed in \secref{LFC}). According to formula
\eqref{matching arthur}, under this transfer the Hecke eigenvalues
$(\nu_x^H)$ of $\pi_H$ and $(\nu_x)$ of $\pi$ should be matched by the
formula
\begin{equation}
\nu_x = \phi\left(\begin{pmatrix} q^{1/2} & 0 \\ 0 & q^{-1/2} \end{pmatrix}
\times \nu_x^H \right).
\end{equation}

Thus, we obtain a conjectural description of the equivalence classes
of automorphic representations of all reductive groups $G$ in terms of
the pairs $(\phi,\pi_H)$, where $\pi_H$ is hadronic, and $\phi$ is a
homomorphism \eqref{phi}. In \cite{L:BE,FLN} (see also
\cite{L:ST,L:IAS}) a strategy for proving it using the trace formula
was suggested. We discuss it next.

\subsection{Trace Formula}    \label{trace for}

Let $f$ be a smooth compactly supported function on $G(\AD_F)$. We
choose a Haar measure on $G({\mathbb A}_F$ normalized so that the
volume of the fixed maximal compact subgroup
$$
G(\OO_F) = \prod_{x \in |X|} G(\OO_x)
$$
is equal to $1$. Denote by ${\mb K}_f$ the operator on the space of
automorphic functions on $G(F)\bs G(\AD_F)$ acting by the formula
$$
({\mathbf K}_f \cdot \psi)(x) = \underset{G({\mathbb A)}}\int \psi(xy)
f(y) dy.
$$
Thus, we ``average'' the right action of $y \in G(\AD_F)$ with the
``weight'' $f(y)$.

We rewrite ${\mb K}_f$ as an integral operator corresponding to the
kernel
\begin{equation}    \label{K and H}
K_f(x,y) = \sum_{a \in G(F)} f(x^{-1} a y),
\end{equation}
acting as follows:
$$
({\mb K}_f \cdot \psi)(x) = \underset{G(F)\bs G({\mathbb A)}}\int
K_f(x,y) \psi(y) dy.
$$

The {\em Arthur--Selberg trace formula} for ${\mb K}$ reads, formally,
\begin{equation}    \label{general}
\on{Tr} {\mb K}_f = \underset{G(F)\bs G({\mathbb A)}}\int K_f(x,x) dx
\end{equation}
This is correct if $G(F) \bs G(\AD_F)$ is compact, in which case there
is no continuous spectrum; otherwise, some important modifications
need to be made. We will not discuss this here, referring the reader
to \cite{Arthur:trace} and references therein.

The left hand side of this formula, called the {\em spectral side},
may be rewritten as follows (we ignore the continuous spectrum):
\begin{equation}    \label{s1}
\sum_\pi m_\pi \on{Tr}({\mb K}_f,\pi),
\end{equation}
where the sum is over the irreducible automorphic representations
$\pi$ of $G(\AD_F)$ and $m_\pi$ is the multiplicity of $\pi$ in the
space of automorphic functions.

The right hand side of \eqref{general} may be rewritten as (see
\cite{Arthur:trace}, Section 1)
\begin{equation}    \label{sum over gamma}
\sum_{\gamma \in G(F)/\on{conj.}} a_\gamma O_\gamma(f),
\end{equation}
where $\gamma$ runs over the set of conjugacy classes in $G(F)$,
$O_\gamma(f)$ is the global {\em orbital integral} defined by the
formula
$$
O_\gamma(f) = \underset{G_\gamma(\AD_F)\bs G(\AD_F)}\int
f(g^{-1}\gamma g) dg
$$
and
$$
a_\gamma = \on{vol}(G_\ga(F)\bs G_\ga(\AD_F)).
$$
Here $G_{\gamma}(F)$ (resp., $G_\ga(\AD_F)$) denotes the stabilizer of
$\gamma$ in $G(F)$ (resp., $G(\AD_F)$).

The sum \eqref{sum over gamma} is usually called the ``geometric
side'' of the trace formula, but we will call it the {\em orbital
  side}, because by its ``geometrization'' we will understand its
representation as the trace of the Frobenius on a vector space.

Thus, the trace formula \eqref{general} takes the form
\begin{equation}    \label{trace formula}
\sum_\pi m_\pi \on{Tr}({\mb K}_f,\pi) = \sum_{\gamma \in
  G(F)/\on{conj.}} a_\gamma O_\gamma(f).
\end{equation}

We want to use this formula to establish the functoriality transfer
\eqref{funct pr}. The idea is to find enough relations between
$\on{Tr} {\mb K}_f$ and $\on{Tr} {\mb K}_{f^H}$ for a sufficiently
large class of functions $f$ on $G(\AD_F)$ and a suitable map $$f
\mapsto f^H,$$ where $f^H$ is a function on $H(\AD_F)$. Though the
formula \eqref{trace formula} for a single function $f$ does not
necessarily pin down a particular irreducible representation $\pi$ --
the left hand side of \eqref{trace formula} is a sum over those -- if
we have at our disposal formulas for the trace $\on{Tr} {\mb K}_f$ for
a sufficiently large class of functions $f$, then we can separate
different irreducible representations. So if we can prove enough
relations between $\on{Tr} {\mb K}_f$ and $\on{Tr} {\mb K}_{f^H}$,
then we can often derive the existence of an irreducible automorphic
representation $\pi$ of $G(\AD_F)$ whose Hecke eigenvalues match those
of an irreducible automorphic representation $\pi_H$ of $H(\AD_F)$
(or, in general, $L$-packets of those).

In order to find these relations between the traces, we look for
relations between the global orbital integrals $O_\gamma(f)$ and
$O_{\gamma^H}(f^H)$ appearing on the right hand side of \eqref{trace
  formula} for $G(\AD_F)$ and $H(\AD_F)$, respectively. This should
eventually be reduced to proving relations between local orbital
integrals of the local factors $f_x$ and $f^H_x$ of $f = \bigotimes_x
f_x$ and $f^H = \bigotimes_x f^H_x$, respectively. Thus, everything
should boil down to a problem in local harmonic analysis. This is the
basic strategy used to prove the Functoriality Conjecture.

This strategy has been successfully employed in a number of
cases. Perhaps, the most famous (and historically one of the first)
examples is the {\em Jacquet--Langlands theory} \cite{JL}. Here
$G=GL_2$ and $H$ is the multiplicative group of a quaternion algebra
over $F$, which is an inner form of $G$. Thus, $\LG=\LH=GL_2$ and we
take the identity as the homomorphism $a$ between them. Jacquet and
Langlands give a very explicit construction of the transfer of
functions and conjugacy classes under which the orbital integrals for
the two groups are equal. This, together with the strong multiplicity
one theorem for $GL_2$ (which says that the collection of conjugacy
classes $(\nu_x)$ at almost all points $x \in |X|$ uniquely determine
an irreducible automorphic representation of $GL_2(\AD_F)$), allows
them to prove the existence of the transfer $\pi_H \mapsto \pi$
satisfying the above properties.

\medskip

\noindent {\bf Technical point.} Labesse and Langlands have shown in
\cite{LL1} that the same strategy fails already in the case of
$G=\SL_2$. It turns out that for general groups on needs first to
``stabilize'' the trace formula. Roughly speaking, this allows to
write the orbital side in terms of the products, over $x \in |X|$, of
local {\em stable} orbital integrals. Here ``stable'' refers to
``stable conjugacy class'' in $G(F_x)$, the union of the conjugacy
classes in $G(F_x)$ that are conjugate to each other over the
algebraic closure of $F_x$ (if $G=GL_n$, each stable conjugacy class
consists of one conjugacy class, but for other groups it may consist
of several conjugacy classes). Stable orbital integrals for different
groups may then be compared. Fortunately, the stabilization of
trace formulas has now been achieved thanks to Ng\^o's recent proof of
the fundamental lemma \cite{Ngo:FL} and other important results.

One of the benefits of the stabilization of the trace formula is that
it leads to the proof of the Functoriality Conjecture for the
so-called {\em endoscopic} groups (at least, those of classical
types). These are the the groups $H$ whose Langlands dual groups $\LH$
are, roughly speaking, the centralizers of semi-simple elements in
$\LG$ (see, e.g., \cite{Ngo:FL} for a precise definition).\qed

\subsection{Strategy}    \label{str}

In \cite{FLN}, following \cite{L:BE,L:PR} (see also
\cite{L:ST,L:IAS,FN}), the following strategy for proving
functoriality has been proposed.

Suppose we had at our disposal an explicitly defined family of
integral operators $\{ {\mathbf K}_{f_i} \}$ on the space of
automorphic functions on $G(F)\bs G(\AD_F)$ which annihilate all
automorphic representations of $G(\AD_F)$ that do {\em not} come by
transfer from automorphic representations of $H(\AD_F)$. Then we will
be able to isolate in $L_2(G(F)\bs G(\AD_F))$ those representations
that come by functoriality from automorphic representations of
$H(\AD_F)$. Hence we may compare the traces of these operators with
the traces of some operators $\{ {\mathbf K}_{f^H_i} \}$ acting on
$L_2(H(F)\bs H(\AD_F))$ using the corresponding orbital integrals.

While it is not known how to do this literally for any given pair of
groups $G$ and $H$, operators with a similar property have indeed
been constructed in \cite{L:BE,FLN}.

In what follows we will describe a class of such operators, denoted by
${\mb K}_{d,\rho}$, defined in \cite{FLN}, restricting ourselves for
simplicity to unramified automorphic representations (for the general
case, see \cite{FLN}). In this case $f$ is a finite linear combination
of functions of the form $\bigotimes_x f_x$, where each $f_x$ is in
the spherical Hecke algebra of $G(F_x)$ (with respect to
$G(\OO_x)$), and $f_x=1$ for all but finitely many $x \in |X|$.

The operators ${\mb K}_{d,\rho}$ depend on a positive integer $d$ and
an irreducible representation $\rho$ of $\LG$ and are expected to have
the following property: for sufficiently large $d$, the
representations of $G(\AD_F)$ that contribute to the trace of ${\mb
  K}_{d,\rho}$ are those coming by functoriality from the groups $H$
satisfying the following:

\medskip

\noindent {\bf Property $(G,\rho)$:} the pull-back $a^*(\rho)$ of
$\rho$ to $\LH$ under the homomorphism $a: \LH \to \LG$ has non-zero
invariant vectors.

\medskip

This suggests a path to proving functoriality: we need to express the
right hand side of the trace formula for ${\mb K}_{d,\rho}$ as the sum
of orbital integrals and compare these orbital integrals for $G$ and
the groups $H$ satisfying property $(G,\rho)$.

\medskip

We will now give a simple example of a group satisfying this property
and then define the operators ${\mb K}_{d,\rho}$. Then we will compute
the eigenvalues of these operators on the Hecke eigenfunctions in
\secref{eig K}. Using this computation, we will derive the crucial
\lemref{odin} which shows that for large enough $d$ the operator ${\mb
  K}_{d,\rho}$ annihilates the Hecke eigenfunctions that do not come
by functoriality from the groups $H$ satisfying property $(G,\rho)$.

\subsection{Example}

What are the possible groups $H$ with the $(G,\rho)$ property?
Consider the case of $G=GL_2$. Then $\LG=GL_2$ as well. Let
$\rho=\rho_1$, the defining two-dimensional representation of
$GL_2$. We are interested in the reductive subgroups $\LH$ that
stabilize proper non-zero subspaces of $\rho$.  In this case this
subspace has to be a line in the two-dimensional vector space. The
group $\LH$ is then a one-dimensional torus. This is not a very
interesting example, because functoriality for tori is established via
the Eisenstein series.

Let us now consider the three-dimensional representation $\rho_2 =
\on{Sym}^2(\rho_1)$. If we choose a basis $\{ v_1,v_2 \}$ of $\rho_1$,
then $\rho_2$ has the basis
$$
\{ v_1 \otimes v_1, v_1 \otimes v_2 + v_2 \otimes v_1, v_2 \otimes v_2
\}.
$$
Then in addition to the one-dimensional torus stabilizing the vector
$v_1 \otimes v_1$, we will have another group with the $(GL_2,\rho_2)$
property, $O_2 = \Z_2 \ltimes GL_1$,
stabilizing the line spanned by the second vector. This group consists
of the matrices
$$
\begin{pmatrix} x & 0 \\ 0 & x^{-1} \end{pmatrix},
\qquad \begin{pmatrix}   x & 0 \\ 0 &
  x^{-1} \end{pmatrix} \begin{pmatrix} 0 & 1 \\ 1 & 0 \end{pmatrix}.
$$
It is the Langlands dual group of the twisted tori described in
\secref{L dual grp}. Hence the corresponding groups $H$ are the
twisted tori in this case.

\subsection{Definition of ${\mb K}_{d,\rho}$}

Let $X$ be defined over $k=\Fq$. Assume for simplicity that $G$ is
split over $k$ (a more general case is considered in
\cite{FN}). Recall the Hecke operator ${\mb H}_{\rho,x}, x \in |X|,
\rho \in \on{Rep} \LG$, and its kernel $K_{\rho,x}$, which is a
function on $\Bun_G(k) \times \Bun_G(k)$.

We define the kernel $K_{d,\rho}$ for $d \geq 1$ on $\Bun_G(k) \times
\Bun_G(k)$.

\medskip

For $d=1$ it is simply the sum of $K_{\rho,x}$ over all $x \in X(k)$:
$$
K_{1,\rho} = \sum_{x \in |X|} K_{\rho,x}.
$$

For $d=2$, we want to define the ``symmetric square'' of
$K_{1,\rho}$. In other words, we sum over the degree two effective
divisors $D$ -- these are the $k$-points in the symmetric square
$X^{(2)} = X^2/S_2$ of our curve $X$. There are three types of such
divisors: $D=(x)+(y)$, where $x,y \in X(k), x \neq y$ -- to which we
assign $K_{\rho,x} K_{\rho,y}$; $D=x$, where $x \in X({\mathbb
  F}_{q^2})$ -- we assign $K_{\rho,x}$; and $D=2(x)$ -- then naively
we could assign $K_{\rho,x}^2 = K_{\rho^{\otimes 2},x}$, but since we
want the symmetric product, we assign instead $K_{\rho^{(2)},x}$,
where
$$
\rho^{(2)} = \on{Sym}^2(\rho).
$$

Similarly, for $d>2$ we set
\begin{equation}    \label{kernel of kdrho}
K_{d,\rho} = \sum_{D \in X^{(d)}(k)} \; \prod_i
K_{\rho^{(n_i)},x_i}, \qquad D = \sum_i n_i[x_i],
\end{equation}
where
$$
\rho^{(n)} = \on{Sym}^n(\rho).
$$

Let ${\mb K}_{d,\rho}$ be the integral operator on functions on
$\Bun_G(k)$ corresponding to the kernel $K_{d,\rho}$. Thus,
\begin{equation}    \label{kernel of kdrho1}
{\mb K}_{d,\rho} = \sum_{D \in
 X^{(d)}(k)} \; \prod_i {\mb H}_{\rho^{(n_i)},x_i}, \qquad D = \sum_i
n_i[x_i].
\end{equation}

\subsection{Eigenvalues of ${\mb K}_{d,\rho}$}    \label{eig K}

Now let $f_\sigma$ be a Hecke eigenfunction on $\Bun_G(k)$ with
respect to an unramified homomorphism $\sigma: W(F) \to \LG$. Recall
from formulas \eqref{Hecke ef} and \eqref{hx} that
$$
{\mb H}_{V,x} \cdot f_\sigma =
\on{Tr}(\sigma(\on{Fr}_x),V) \; f_\sigma
$$
for any finite-dimensional representation $V$ of $\LG$.

Therefore we find from formula \eqref{kernel of kdrho1} that
\begin{equation}    \label{eig Kdrho}
{\mb K}_{d,\rho} \cdot f_\sigma = l_{d,\rho} f_\sigma,
\end{equation}
where
\begin{equation}    \label{ldrho}
l_{d,\rho} = \sum_{D \in
 X^{(d)}(k)} \quad \prod_i
\on{Tr}(\sigma(\on{Fr}_{x_i}),\rho^{(n_i)}), \qquad D = \sum_i
n_i[x_i].
\end{equation}

Consider the generating function of these eigenvalues:
\begin{equation}    \label{Lf}
L(\sigma,\rho,t) = \sum_{d \geq 0} l_{d,\rho} t^d = \prod_{x \in |X|}
\on{det}(1-t^{\deg(x)} \sigma(\on{Fr}_x),\rho)^{-1}.
\end{equation}
If we substitute $t=q^{-s}$, we obtain the $L$-{\em function}
$L(\sigma,\rho,q^{-s})$ attached to $\sigma$ and $\rho$. Thus, formula
\eqref{eig Kdrho} implies that the eigenvalues of ${\mb K}_{d,\rho}$
are the coefficients of this $L$-function:
\begin{equation}    \label{eig Kdrho1}
{\mb K}_{d,\rho} \cdot f_\sigma = \left( q^{-ds}\text{-coefficient} \;
  \text{of} \; L(\sigma,\rho,q^{-s}) \right) \; f_\sigma.
\end{equation}

\begin{lem} \label{odin}
  Suppose that the spaces of invariants and coinvariants of the
  representation $\rho \circ \sigma$ of the Weil group $W(F)$ are
  equal to $0$. Then the corresponding Hecke eigenfunction $f_\sigma$
  satisfies
\begin{equation}    \label{odin formula}
{\mb K}_{d,\rho} \cdot f_\sigma = 0, \qquad d>2(g-1)\dim\rho,
\end{equation}
where $g$ is the genus of $X$.

The same statement holds if $f_\sigma$ is a Hecke eigenfunction
corresponding to an Arthur parameter $\sigma: \SL_2 \times W(F) \to
\LG$ and the spaces of invariants and coinvariants of the
representation $\rho \circ \sigma|_{W(F)}$ of the Weil group $W(F)$
are equal to $0$.
\end{lem}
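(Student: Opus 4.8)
The plan is to reduce the vanishing statement to a cohomological vanishing for $\ell$-adic sheaves on the curve $X$, using the explicit formula \eqref{Lf} for the generating function $L(\sigma,\rho,t)$ of the eigenvalues. The key point is that $L(\sigma,\rho,t)$ is, up to the substitution $t = q^{-s}$, the $L$-function attached to the $\ell$-adic representation $\rho \circ \sigma$ of $W(F)$; equivalently, it is the $L$-function of the corresponding lisse $\ell$-adic sheaf $\F_{\rho,\sigma}$ on (an open subset of) $X$, which here is lisse on all of $X$ since $\sigma$ is unramified. By the Grothendieck--Lefschetz trace formula, this $L$-function is a rational function in $t$:
\begin{equation} \label{gr}
L(\sigma,\rho,t) = \frac{\det\bigl(1 - t\cdot \on{Fr} \mid H^1(X \otimes \ol{\F}_q, \F_{\rho,\sigma})\bigr)}{\det\bigl(1 - t\cdot\on{Fr} \mid H^0\bigr)\,\det\bigl(1 - t\cdot\on{Fr}\mid H^2\bigr)}.
\end{equation}
The hypothesis that the invariants and coinvariants of $\rho\circ\sigma$ vanish says precisely that $H^0(X\otimes\ol{\F}_q,\F_{\rho,\sigma}) = H^0_c = 0$ and, by Poincar\'e duality, $H^2(X\otimes\ol{\F}_q,\F_{\rho,\sigma}) = 0$ as well. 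Hence the denominator in \eqref{gr} is $1$, and $L(\sigma,\rho,t)$ is a \emph{polynomial} in $t$, equal to $\det(1 - t\cdot\on{Fr}\mid H^1)$.

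Next I would bound the degree of this polynomial. Since $\F_{\rho,\sigma}$ is lisse of rank $\dim\rho$ on the smooth projective curve $X$ of genus $g$, and $H^0 = H^2 = 0$, the Euler characteristic formula gives $\dim H^1(X\otimes\ol{\F}_q,\F_{\rho,\sigma}) = -\chi(X,\F_{\rho,\sigma}) = -\dim\rho\cdot\chi(X,\Ql) = -\dim\rho\cdot(2 - 2g) = 2(g-1)\dim\rho$. Therefore $L(\sigma,\rho,t)$ has degree exactly $2(g-1)\dim\rho$ as a polynomial in $t$, so all its coefficients of $t^d$ with $d > 2(g-1)\dim\rho$ vanish. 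By \eqref{eig Kdrho1}, these coefficients are exactly the eigenvalues $l_{d,\rho}$, so ${\mb K}_{d,\rho}\cdot f_\sigma = 0$ for $d > 2(g-1)\dim\rho$, as claimed.

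For the Arthur-parameter case, one observes that the eigenvalue computation \eqref{eig Kdrho}--\eqref{ldrho} only uses the conjugacy classes $\nu_x$ attached to $f_\sigma$ at the unramified points; by formula \eqref{matching arthur} these are $\phi\bigl(\operatorname{diag}(q^{1/2},q^{-1/2})\times\on{Fr}_x\bigr)$ twists of the $W(F)$-data, but the generating function $L$ still factors as an Euler product over $|X|$ of the same shape, now with $\on{Fr}_x$ acting on $\rho^{(n)}$ via the composite of $\sigma|_{W(F)}$ with the appropriate twist. The upshot is that $L$ is again (up to a shift $t\mapsto q^{1/2}t$, which does not change degrees or the vanishing locus) the $L$-function of the lisse sheaf attached to $\rho\circ\sigma|_{W(F)}$, and the same Euler-characteristic bound applies once we assume its invariants and coinvariants vanish. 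The main obstacle I anticipate is bookkeeping rather than conceptual: one must check carefully that the Euler product \eqref{Lf} really is the full $L$-function with no missing or extra local factors (this is where unramifiedness of $\sigma$ is used, so that the sheaf $\F_{\rho,\sigma}$ is lisse on all of $X$ and there are no ``bad'' Euler factors or local terms at punctures), and that the identification of $H^0, H^2$ with the invariants and coinvariants of $\rho\circ\sigma$ as $W(F)$-modules (via $\pi_1(X) \twoheadleftarrow W(F)$ and Poincar\'e duality) is set up compatibly with the Frobenius actions. Everything else is a formal consequence of Grothendieck--Lefschetz.
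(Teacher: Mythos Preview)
Your approach is essentially the same as the paper's: both reduce to the Grothendieck--Lefschetz expression for $L(\sigma,\rho,t)$ as a ratio of characteristic polynomials of Frobenius on $H^i(X\otimes\ol{\F}_q,\Lc_{\rho\circ\sigma})$, use the hypothesis to kill $H^0$ and $H^2$, and read off the degree bound $2(g-1)\dim\rho$ from the Euler characteristic. The paper takes a slightly more explicit route, constructing the symmetric-power sheaf $\Lc^{(d)}$ on $X^{(d)}$ and computing $H^\bullet(X^{(d)},\Lc^{(d)})$ via K\"unneth to \emph{derive} the cohomological formula \eqref{gr} rather than citing it, but this is packaging, not a different idea.

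One small correction on the Arthur-parameter case: it is not a single shift $t\mapsto q^{1/2}t$. The torus in $\SL_2$ grades $\rho\circ\sigma$ into pieces $(\rho\circ\sigma)_i$, and the eigenvalue generating function factors as $\prod_i L(\sigma|_{W(F)},(\rho\circ\sigma)_i,\,q^{i/2}t)$, with a \emph{different} shift in each degree. Your conclusion is still correct: the hypothesis on $\rho\circ\sigma|_{W(F)}$ passes to each graded piece, so each factor is a polynomial of degree $2(g-1)\dim(\rho\circ\sigma)_i$, and the degrees sum to $2(g-1)\dim\rho$. The paper does not spell this out either; it simply asserts the same argument applies.
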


\begin{cor}    \label{odin cor}
  Let $^\la H_\sigma$ be the Zariski closure of the image of $W(F)$ in
  $^\la G_\sigma$ under $\sigma$. Then ${\mb K}_{d,\rho}$ annihilates
  $f_\sigma$ for $d>2(g-1)\dim\rho$ unless the restriction of $\rho$
  to $^\la H$ has non-zero invariants or coinvariants.
\end{cor}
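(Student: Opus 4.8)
The plan is to reduce the vanishing of ${\mb K}_{d,\rho}\cdot f_\sigma$ for large $d$ to a statement about the $L$-function $L(\sigma,\rho,t)$ that was computed in \eqref{Lf}. By \eqref{eig Kdrho1}, the eigenvalue $l_{d,\rho}$ is precisely the coefficient of $t^d$ in the power series expansion of $L(\sigma,\rho,t)=\prod_{x\in|X|}\det(1-t^{\deg(x)}\sigma(\on{Fr}_x),\rho)^{-1}$. So the assertion \eqref{odin formula} is equivalent to the claim that $L(\sigma,\rho,t)$ is a \emph{polynomial} in $t$ of degree at most $2(g-1)\dim\rho$. The natural tool here is the Grothendieck--Lefschetz trace formula: $L(\sigma,\rho,t)$ is the $L$-function attached to the $\ell$-adic local system $\Fc$ on $X$ associated to the representation $\rho\circ\sigma$ of $W(F)\supset\pi_1(X)$, and by the Grothendieck formula it factors as
\begin{equation} \label{groth}
L(\sigma,\rho,t) = \frac{\det(1-\on{Fr}\cdot t \mid H^1(X_{\ol{\Fq}},\Fc))}{\det(1-\on{Fr}\cdot t\mid H^0(X_{\ol{\Fq}},\Fc))\,\det(1-\on{Fr}\cdot t\mid H^2(X_{\ol{\Fq}},\Fc))}.
\end{equation}
Now the hypothesis enters exactly where one wants it: $H^0(X_{\ol{\Fq}},\Fc)$ is the space of global invariants of $\pi_1(X)$ on $\rho\circ\sigma$, and $H^2(X_{\ol{\Fq}},\Fc)$ is (by Poincar\'e duality, up to a Tate twist) dual to the coinvariants. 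Since we assume both the invariants and coinvariants of $\rho\circ\sigma$ vanish, the denominator in \eqref{groth} is $1$, and hence $L(\sigma,\rho,t)=\det(1-\on{Fr}\cdot t\mid H^1(X_{\ol{\Fq}},\Fc))$ is a polynomial.

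It remains to bound its degree, which is $\dim H^1(X_{\ol{\Fq}},\Fc)$. With $H^0=H^2=0$ the Euler characteristic formula gives $-\dim H^1(X_{\ol{\Fq}},\Fc)=\chi(X_{\ol{\Fq}},\Fc)=\rk(\Fc)\cdot\chi(X,\Ql)=\dim\rho\cdot(2-2g)$, since $\Fc$ is lisse on the \emph{complete} curve $X$ (the homomorphism $\sigma$ is unramified). Hence $\dim H^1(X_{\ol{\Fq}},\Fc)=2(g-1)\dim\rho$, so $l_{d,\rho}=0$ for every $d>2(g-1)\dim\rho$, which is \eqref{odin formula}. For the Arthur-parameter version one simply observes that $f_\sigma$ is a Hecke eigenfunction whose eigenvalue at $x$ is $\on{Tr}(\sigma(\on{Fr}_x),\rho^{(n)})$ computed via \eqref{matching arthur}; replacing $\sigma$ by its restriction to $W(F)$ (up to the harmless half-integral powers of $q$, which only rescale $t$ and do not affect polynomiality or degree) reduces to the case already treated, and the hypothesis is now imposed directly on $\rho\circ\sigma|_{W(F)}$.

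The one point requiring care is the passage from the unramified homomorphism $\sigma\colon W(F)\to\LG$ to an honest lisse $\ell$-adic sheaf on $X$ to which Grothendieck's formula applies: one must know that the $L$-function built from local Frobenius traces in \eqref{Lf} coincides with the cohomological $L$-function in \eqref{groth}. This is standard once one notes that ``unramified'' means $\sigma$ factors through $\pi_1(X)$ (the arithmetic fundamental group of the \emph{complete} curve), so $\Fc$ is lisse everywhere and there are no bad Euler factors; the identification of the two product expansions is then the classical computation underlying the Weil conjectures for curves with coefficients in a local system. So the main --- in fact the only --- conceptual input beyond bookkeeping is the interpretation of $H^0$ and $H^2$ of the local system as the invariants and coinvariants of $\rho\circ\sigma$, which is exactly what the hypothesis of the lemma kills; everything else is the Euler characteristic count. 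Corollary~\ref{odin cor} is then immediate: the invariants (resp.\ coinvariants) of $\rho\circ\sigma$ on all of $\rho$ are the same as those of the restriction of $\rho$ to the Zariski closure $^\la H_\sigma$ of the image of $W(F)$, since a vector is fixed by the image iff it is fixed by its Zariski closure (the stabilizer being Zariski closed), so the vanishing hypothesis of \lemref{odin} holds precisely when $\rho|_{^\la H}$ has neither invariants nor coinvariants.
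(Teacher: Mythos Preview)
Your proof is correct and follows essentially the same approach as the paper: you reduce \eqref{odin formula} to the polynomiality of $L(\sigma,\rho,t)$ via the Grothendieck--Lefschetz formula, kill the denominator using the vanishing of $H^0$ and $H^2$ (invariants and coinvariants), and bound the degree of the numerator by the Euler characteristic, just as in Section~4.9. The deduction of \corref{odin cor} from \lemref{odin} by noting that a vector is fixed by the image of $W(F)$ iff it is fixed by its Zariski closure $^\la H_\sigma$ is exactly the intended (and immediate) step.
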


We expect that the representation $\rho \circ \sigma$ is semi-simple,
and hence its invariants and coinvariants are isomorphic. We will
assume that this is the case in what follows.

\subsection{Proof of \lemref{odin}}

In order to prove the lemma, we recall the Grothendieck--Lefschetz
formula for the $L$-function.

Suppose we are given an unramified $n$-dimensional $\ell$-adic
representation of $W(F)$. We attach to it an $\ell$-adic locally
constant sheaf (local system) ${\mc L}$ on $X$.

Let $X^{(d)} = X^d/S_d$ is the $d$th symmetric power of $X$. This is a
smooth algebraic variety defined over $k$, whose $k$-points are
effective divisors on $X$ of degree $d$. We define a sheaf on $X^{(d)}$,
denoted by $\Lc^{(d)}$ and called the {\em
$d$th symmetric power of $\Lc$}, as follows:
\begin{equation}    \label{stalk}
{\Lc}^{(d)} = \left(\pi^d_*(\Lc^{\boxtimes d})\right)^{S^{(d)}},
\end{equation}
where $\pi^d: X^d \to X^{(d)}$ is the natural projection. The stalks
of $\Lc^{(d)}$ are easy to describe: they are tensor products of
symmetric powers of the stalks of $\Lc$. The stalk ${\mc L}_{d,D}$ at
a divisor $D = \sum_i n_i [x_i]$ is
$$
\Lc^{(d)}_{D} = \bigotimes_i S^{n_i}({\mc \Lc}_{x_i}),
$$
where $S^{n_i}({\mc \Lc}_{x_i})$ is the $n_i$-th symmetric power of
the vector space ${\mc \Lc}_{x_i}$.  In particular, the dimensions of
the stalks are not the same, unless $n=1$. (In the case when $n=1$ the
sheaf $\Lc^{(d)}$ is in fact a rank 1 local system on $X^{(d)}$.) For
all $n$, $\Lc^{(d)}$ is actually a perverse sheaf on $X^{(d)}$ (up to
cohomological shift), which is irreducible if and only if ${\mc L}$ is
irreducible.

Now observe that
\begin{equation}    \label{D Tr}
\on{Tr}(\on{Fr}_D,\Lc^{(d)}_D) = \prod_i
\on{Tr}(\sigma(\on{Fr}_{x_i}),\rho^{(n_i)}), \qquad D = \sum_i
n_i[x_i],
\end{equation}
where $\on{Fr}_D$ is the Frobenius automorphism corresponding to the
$k$-point $D$ of $X^{(d)}$ and ${\mc L}_D$ is the stalk of ${\mc L}_d$
at $D$.

By the Lefschetz trace formula (see, e.g., \cite{Milne,Weil}), the
trace of the Frobenius on the \'etale cohomology of an $\ell$-adic
sheaf is equal to the sum of the traces on the stalks at the
$\Fq$-points:
\begin{equation}    \label{coeff}
\on{Tr}(\on{Fr},H^\bullet(X^{(d)},{\mc
  L}^{(d)})) = \sum_{D \in X^{(d)}(\Fq)} \on{Tr}(\on{Fr}_D,{\mc
  L}^{(d)}_{D}),
\end{equation}
By formula \eqref{D Tr}, the RHS of \eqref{coeff} is $l_{d,\rho}$
given by formula \eqref{ldrho}, and also the $q^{-ds}$-coefficient of
the $L$-function $L(\sigma,\rho_{\on{def}},q^{-s})$ of the
representation $\sigma: W(F) \to GL_n$ associated to ${\mc L}$ and the
defining $n$-dimensional representation $\rho_{\on{def}}$ of $GL_n$.

Let us compute the cohomology of $X^{(d)}$ with coefficients
in ${\mc L}^{(d)}$. By the K\"unneth formula, we have
$$
H^\bullet(X^{(d)},{\mc L}^{(d)}) = \left( H^\bullet(X^{d},{\mc
    L}^{\boxtimes d}) \right)^{S_d} = \left( H^\bullet(X,{\mc
    L})^{\otimes d} \right)^{S_d},
$$
where the action of the symmetric group $S_d$ on the cohomology is as
follows: it acts by the ordinary transpositions on the even cohomology
and by signed transpositions on the odd cohomology. Thus, we find that
\begin{multline}    \label{sym power}
H^\bullet(X^{(d)},{\mc L}^{(d)}) = \\
\bigoplus_{d_0+d_1+d_2=d}
S^{d_0}(H^0(X,{\mc L})) \otimes \Lambda^{d_1}(H^1(X,{\mc L})) \otimes
S^{d_2}(H^2(X,{\mc L})).
\end{multline}
The cohomological grading is computed according to the rule that $d_0$
does not contribute to cohomological degree, $d_1$ contributes $d_1$,
and $d_2$ contributes $2d_2$. In addition, we have to take into
account the cohomological grading on ${\mc L}$.

Formulas \eqref{coeff} and \eqref{sym power} give us the following
expression for the generating function of the RHS of \eqref{coeff}:
\begin{multline} \label{LG formula} \sum_{d\geq 0} t^d \sum_{D \in
    X^{(d)}(\Fq)} \on{Tr}(\on{Fr}_D,{\mc L}_{D}) = \\
  \frac{\on{det}(1-t \on{Fr},H^1(X,{\mc L}))}{\on{det}(1-t
    \on{Fr},H^0(X,{\mc L})) \; \on{det}(1-t\on{Fr},H^2(X,{\mc L}))}.
\end{multline}
This is the Grothendieck--Lefschetz formula for the $L$-function
$L(\sigma,\rho_{\on{def}},t)$.

Now let $\Lc_{\rho \circ \sigma}$ be the $\ell$-adic local system
corresponding to the representation $\rho \circ \sigma$ of
$W(F)$. Then the trace of the Frobenius on the right hand side of
\eqref{sym power} gives us the eigenvalue $l_{d,\rho}$ of ${\mb
  K}_{d,\rho}$ on $f_\sigma$.

If $\rho \circ \sigma$ has zero spaces of invariants and coinvariants,
then
$$
H^0(X,\Lc_{\rho \circ \sigma}) = H^2(X,\Lc_{\rho \circ \sigma}) = 0
$$
and $\dim H^1(X,\Lc_{\rho \circ \sigma}) = (2g-2)\dim\rho$ (since it
is then equal to the Euler characteristic of the constant local system
of rank $\dim\rho$ on $X$). Hence we obtain that
$$
H^\bullet(X^{(d)},\Lc_{\rho\circ\sigma}^{(d)}) \simeq
\Lambda^d(H^1(X,\Lc_{\rho \circ \sigma})),
$$
and the $L$-function $L(\sigma,\rho,q^{-s})$ is a polynomial in
$q^{-s}$ of degree $2(g-1)\dim\rho$. \lemref{odin} then follows from
formula \eqref{eig Kdrho1}.

\subsection{Decomposition of the trace formula}    \label{decomp
  trace}

The goal of the program outlined in \cite{FLN} (see also
\cite{L:BE,L:ST,L:IAS}) is to use the trace formula to prove the
existence of the functoriality transfers corresponding to the
homomorphisms $\phi$ given by formula \eqref{phi} in \secref{arthur
  funct}. The basic idea is to apply the trace formula to the
operators ${\mb K}_{d,\rho}$ and use \lemref{odin}. Here is a more
precise description.

As the first step, we need to remove from the trace formula the
contributions of the non-tempered representations (those correspond to
the transfer associated to the homomorphisms $\phi$ whose restriction
to $\SL_2$ is non-trivial), because these terms dominate the trace
formula (see the calculation below). Then we want to use \lemref{odin}
to isolate in the trace of ${\mb K}_{d,\rho}$ with $d>(2g-2)\dim \rho$
the terms corresponding to the automorphic representations of
$G(\AD_F)$ that come by functoriality from the groups $H$ satisfying
the $(G,\rho)$ property.

\medskip

Recall from \secref{arthur funct} that to each Arthur parameter
$\sigma$ we attach the group $^\la H = {}^\la H_\sigma$ which is the
Zariski closure of the image of $W(F)$ in $^\la G_\sigma$ under
$\sigma$. According to \lemref{odin} and \corref{odin cor}, for large
enough $d$ the operator ${\mb K}_{d,\rho}$ acts non-trivially only on
those automorphic representations of $G(\AD_F)$ which correspond to
$\sigma$ such that $\rho \circ \sigma$ has non-zero invariants. This
happens if and only if the restriction of $\rho$ to $^\la H_\sigma$
has non-zero invariants; in other words, if and only if $^\la H$
satisfies property $(G,\rho)$.

Thus, assuming that Arthur's conjectures are true, we obtain that the
trace of ${\mb K}_{d,\rho}$ decomposes as a double sum: first, over
different homomorphisms
$$
\varphi: \SL_2 \to {}\LG,
$$
and second, for a given $\varphi$, over the subgroups $^\la H$ of the
centralizer $^\la G_\varphi$ of $\varphi$ having non-zero invariants in
$\rho$:
\begin{equation}    \label{Phi and Psi}
\on{Tr} {\mb K}_{d,\rho} = \sum_\varphi \sum_{^\la H \subset {}^\la
  G_\varphi} \Phi_{\varphi,{}^\la H}.
\end{equation}
Here $\Phi_{\varphi,{}^\la H}$ is the trace over the automorphic
representations of $G(\AD_F)$ which come from the transfer of tempered
(hadronic) representations of $H(\AD_F)$ with respect to
homomorphisms $\phi$, given by \eqref{phi}, such that $\phi|_{\SL_2} =
\varphi$.

A precise formula for these eigenvalues of ${\mb K}_{d,\rho}$ is
complicated in general, but we can compute its asymptotics as $d \to
\infty$.

Suppose first that $\varphi$ is trivial, so we are dealing with the
tempered representations.  If we divide ${\mb K}_{d,\rho}$ by $q^d$,
then the asymptotics will be very simple:
\begin{equation}    \label{asymptotics}
q^{-d} (\on{Tr} {\mb K}_{d,\rho})_{\on{temp}} \sim \sum_{^\lambda H \subset \LG}
\quad \sum_{\sigma': W(F) \to {}^\lambda H} N_\sigma \begin{pmatrix}
d+m_\sigma(\rho)-1 \\ m_\sigma(\rho)-1 \end{pmatrix},
\end{equation}
where $N_\sigma$ is the multiplicity of automorphic representations in
the corresponding $L$-packet.

Indeed, the highest power of $q$ comes from the highest cohomology,
which in this case is
$$
H^{2d}(X^{(d)},\Lc_{\rho \circ \sigma}^{(d)}) = \on{Sym}^{d}(H^2(X,\rho
\circ \sigma))
$$
($d_0=0, d_1=0$, and $d_2=d$ in the notation of formula \eqref{sym
power}). We have $\dim H^2(X,\rho \circ \sigma) = m_\sigma(\rho)$, the
multiplicity of the trivial representation in $\rho \circ
\sigma$ (we are assuming here again that this trivial representation
splits off as a direct summand in $\rho \circ \sigma$), and
\begin{equation}    \label{dim}
\dim \on{Sym}^{d}(H^2(X,\Lc_{\rho \circ \sigma}) = \begin{pmatrix}
  d+m_\sigma(\rho)-1 \\ m_\sigma(\rho)-1 \end{pmatrix}.
\end{equation}

Thus, as a function of $q^d$, the eigenvalues of $q^{-d} {\mb
  K}_{d,\rho}$ on the tempered representations grow as $O(1)$ when $d
\to \infty$.

For the non-tempered representations corresponding to non-trivial
$\varphi: \SL_2 \to {}\LG$, they grow as a higher power of $q^d$. For
instance, the eigenvalue of $q^{-d} {\mb K}_{d,\rho}$ corresponding to
the trivial representation of $G(\AD_F)$ (for which $\varphi$ is a
principal embedding) grows as $O(q^{d(\rho,\mu)})$, where $\mu$ is the
highest weight of $\rho$ (see the calculation in \secref{functor
  Kdrho} below). In general, it grows as $O(q^{da})$, where $2a$ is
the maximal possible highest weight of the image of $\SL_2 \subset
{}\LG$ under $\varphi$ acting on $\rho$. Thus, we see that the
asymptotics of the non-tempered representations dominates that of
tempered representations. This is why we wish to remove the
contribution of the non-tempered representations first.

\medskip

Note that if $\varphi$ is non-trivial, then the rank of $^\la
G_\varphi$ is less than that of $\LG$. As explained in \cite{FLN}, we
would like to use induction on the rank of $\LG$ to isolate and get
rid of the terms in \eqref{Phi and Psi} with non-trivial $\varphi$. In
\cite{FLN} it was shown how to isolate the contribution of the trivial
representation of $G(\AD_F)$ for which $\varphi$ is the principal
embedding (it is, along with all other one-dimensional representations
of $G(\AD_F)$, the most non-tempered).

\medskip

If we can do the same with other non-tempered contributions, then we
will be left with the terms $\Phi_{\on{triv},{}^\la H}$ in
\eqref{Phi and Psi} corresponding to the tempered representations of
$G(\AD_F)$. Denote their sum by $(\on{Tr} {\mb K}_{d,\rho})_{\on{temp}}$.  We
try to decompose it as a sum over $^\la H$:
\begin{equation}    \label{sum2}
(\on{Tr} {\mb K}_{d,\rho})_{\on{temp}} = \sum_{^\la H \subset \LG} (\on{Tr}
  {\mb K}^H_{d,\rho_H})_{\on{temp}}.
\end{equation}
Here the sum should be over all possible $^\lambda H \subset {}\LG$
such that $^\lambda H$ has non-zero invariant vectors in $\rho$, and
${\mb K}^H_{d,\rho_H}$ is the operator corresponding to $\rho_H =
\rho|_{^\la H}$ for the group $H(\AD_F)$ (note that different groups
$H$ may correspond to the same $^\la H$).

The ultimate goal is to prove formula \eqref{sum2} by comparing the
orbital sides of the trace formula for the operators ${\mb
  K}_{d,\rho}$ and ${\mb K}^H_{d,\rho_H}$. Of course, for any given
$\rho$, the right hand side of formula \eqref{sum2} will contain
contributions from different groups $H$. However, because we have two
parameters: $\rho$ and $d$ (sufficiently large), we expect to be able
to separate the contributions of different groups by taking linear
combinations of these formulas with different $\rho$ and $d$. In the
case of $G=GL_2$ this is explained in \cite{L:BE}.

\medskip

There are many subtleties involved in formulas \eqref{Phi and Psi} and
\eqref{sum2}. As we mentioned above, $^\la H$ may not be itself a
Langlands dual group, but it can be enlarged to one. There may exist
more than one conjugacy class of $^\la H$ assigned to a given
automorphic representation; this is expected to be related to the
multiplicities of automorphic representations. Also, comparisons of
trace formulas should always be understood as comparisons of their
{\em stabilized} versions. Therefore the traces in \eqref{sum2} should
be replaced by the corresponding stable traces

The upshot is that we want to establish formula \eqref{sum2} by
proving identities between the corresponding orbital integrals. In the
case of $G=\SL_2$ the first steps have been made in \cite{L:ST}, where
we refer the reader for more details.

One powerful tool that we hope to employ is the {\em geometrization}
of these orbital integrals. We discuss this in the next section.

\section{Geometrization of the orbital part of the trace
  formula}    \label{gos}

Our goal is to construct, in the case that the curve $X$ is defined
over a finite field $\Fq$, a vector space with a natural action of
$\on{Gal}(\ovl{\mathbb F}_q/\Fq)$ such that the trace of the Frobenius
automorphism is equal to the right hand side of the trace formula
\eqref{trace formula}. We hope that this construction will help us to
prove the decompositions \eqref{Phi and Psi} and \eqref{sum2} on the
orbital side of the trace formula. Another important aspect of the
construction is that this vector space will be defined in such a way
that it will also make sense if the curve $X$ is over $\C$. In this
section we outline this construction following \cite{FN}. (We will
discuss the geometrization of the spectral side of the trace formula
in \secref{leap}.)

This section is organized as follows. In \secref{geom Arthur} we
describe the geometric analogues of the Arthur parameters. These are
certain complexes of local systems on the curve $X$. Then in
\secref{functor Kdrho} we construct the sheaf ${\mc K}_{d,\rho}$ such
that the corresponding function is the kernel $K_{d,\rho}$ from the
previous section. We define the corresponding functor ${\mathbb
  K}_{d,\rho}$ acting on the category of sheaves on $\Bun_G$ in
\secref{action of K}.  In order to geometrize the orbital part of the
trace formula, we need to restrict ${\mc K}_{d,\rho}$ to the diagonal
and take the cohomology. This is explained in \secref{v sp}. We
interpret the resulting vector space as the cohomology of a certain
moduli stack defined in Sections \ref{orbital}--\ref{def stack} which
we call the moduli of ``$G$-pairs''. We compare it to the Hitchin
moduli stack in \secref{comparison} and define an analogue of the
Hitchin map in \secref{analogue}. Most of this is taken from
\cite{FN}. In \secref{ex gl2} we discuss in detail the example of
$G=GL_2$. Finally, we present some of the conjectures of \cite{FN} in
\secref{conj gen}.

\subsection{Geometric Arthur parameters}    \label{geom Arthur}

First, we discuss geometric analogues of the Arthur parameters
(in the unramified case).

Let $\rho$ be a representation of $\LG$ on a finite-dimensional vector
space $V$. Then we obtain a representation $\rho \circ \sigma$ of
$\SL_2 \times W(F)$ on $V$. The standard torus of $\SL_2 \subset \SL_2
\times W(F)$ defines a $\Z$-grading on ${\rho\circ\sigma}$:
$$
{\rho\circ\sigma} = \bigoplus_{i\in \Z} \; (\rho\circ\sigma)_i,
$$
where each $\rho\circ\sigma_i$ is a continuous representation of
$W(F)$. Assume that each of them is unramified. Then it gives rise to
an $\ell$-adic local system $\Lc_{(\rho\circ\sigma)_i}$ on $X$. Now we
define $\Lc_{\rho\circ\sigma}$ to be the following complex of local
systems on $X$ with the trivial differential:
$$\Lc_{\rho\circ\sigma} = \bigoplus_{i\in \Z}
\Lc_{(\rho\circ\sigma)_i}[-i].$$

We generalize the notion of Hecke eigensheaf by allowing its
eigenvalue to be an Arthur parameter $\sigma$, as in \eqref{Arthur
  parameter}, by saying that we have a collection of isomorphisms
\begin{equation}    \label{iso1}
{\mathbb H}_\rho({\mc F}) \simeq \Lc_{\rho \circ \sigma} \boxtimes
{\mc F},
\end{equation}
for $\rho \in \on{Rep} \LG$, compatible with respect to the structures
of tensor categories on both sides.

\medskip

As an example, consider the constant sheaf on $\Bun_G$, ${\mc F}_0 =
\underline\Ql|\Bun_G$. This is the geometric analogue of the trivial
representation of $G({\mathbb A}_F$. Assume that $G$ is split and let
$\rho=\rho_\mu\otimes \rho_0$, where $\rho_\mu$ is the irreducible
representation of highest weight $\mu$, $\rho_0$ is the trivial
representation of $\Gamma$. In this case $\wF_\rho=\wF_\mu$ is the
intersection cohomology complex of $\Hc_\mu$ shifted by
$-\dim(X\times\Bun_G)$.

Let us apply the Hecke functor ${\mathbb H}_{\mu,x}$ to ${\mc
  F}_0$. For any $G$-principal bundle $E$, the fiber of $p^{-1}(E)\cap
\Hc_x$ is isomorphic to $\Gr_x$, once we have chosen a trivialization
of $E$ on the formal disc $D_x$. Thus the fiber of ${\mathbb
  H}_{\rho,x}({\mc F}_0)$ at $E$ is isomorphic to
$$
{\mathbb H}_{\rho,x}({\mc
F})_E=H^\bullet(\ol{\on{Gr}}_\mu,\IC(\ol\Gr_\mu)).
$$
This isomorphism does not depend on the choice of the trivialization
of $E$ on $D_x$, so we obtain that
$$
{\mathbb H}_{\rho,x}({\mc F}_0) \simeq
H^\bullet(\ol{\on{Gr}}_\mu,\IC(\ol\Gr_\mu)) \otimes {\mc F}_0.
$$

By the geometric Satake correspondence \cite{MV},
$$
H^\bullet(\ol{\on{Gr}}_\mu,\IC(\ol\Gr_\mu)) \simeq \rho_\mu^{\on{gr}},
$$
a complex of vector spaces, which is isomorphic to the representation
$\rho_\mu$ with the cohomological grading corresponding to the
principal grading on $\rho_\mu$. One can show that as we vary $x$, the
``eigenvalue'' of the Hecke functor ${\mathbb H}_\rho$ is the complex
$\rho_\mu^{\on{gr}} \otimes \Lc_0$, where $\Lc_0$ is the trivial local
system on $X$. In other words, it is the local system $\Lc_{\rho \circ
  \sigma_0}$ as defined above, where $\sigma_0: W(F) \times \SL_2 \to
{}\LG$ is trivial on $W(F)$ and is the principal embedding on
$\SL_2$. We conclude that the constant sheaf on $\Bun_G$ is a Hecke
eigensheaf with the eigenvalue $\sigma_0$.  This is in agreement with
the fact that $\sigma_0$ is the Arthur parameter of the trivial
automorphic representation of $G({\mathbb A}_F$.

For example, if $\rho_\mu$ is the defining representation of $GL_n$,
then the corresponding Schubert variety is ${\mathbb P}^{n-1}$, and we
obtain its cohomology shifted by $(n-1)/2$, because the intersection
cohomology sheaf $\IC(\ol\Gr_\mu)$ is the constant sheaf placed in
cohomological degree $-(n-1)$, that is
$$
H^\bullet(\ol{\on{Gr}}_\mu,\IC(\ol\Gr_\mu)) = \theta^{(n-1)/2} \oplus
\theta^{(n-3)/2} \oplus \ldots \oplus \theta^{-(n-1)/2},
$$
where $\theta^{1/2} = \Ql[-1](-1/2)$. This agrees with the fact that
the principal grading takes values $(n-1)/2,\ldots,-(n-1)/2$ on the
defining representation of $GL_n$, and each of the corresponding
homogeneous components is one-dimensional.

\subsection{The sheaf ${\mc K}_{d,\rho}$}    \label{functor
  Kdrho}

Next, we define a geometric analogue of the operator ${\mb
  K}_{d,\rho}$. Recall that ${\mb K}_{d,\rho}$ is defined as the
integral operator with the kernel $K_{d,\rho}$ given by formula
\eqref{kernel of kdrho}. It is a function on the set of $\Fq$-points
of the moduli stack $\Bun_G$ of $G$-bundles on $X$. Hence, according
to the Grothendieck philosophy, we need to do replace $K_{d,\rho}$ by
an $\ell$-adic sheaf ${\mc K}_{d,\rho}$ on $\Bun_G \times \Bun_G$,
whose ``trace of Frobenius'' function is $K_{d,\rho}$. We then define
${\mathbb K}_{d,\rho}$ as the corresponding ``integral transform''
functor acting on the derived category of $\ell$-adic sheaves on
$\Bun_G$. The construction of this sheaf will work also if $X$ id
define over $\C$, in which case ${\mathbb K}_{d,\rho}$ will be a
functor on the derived category of ${\mc D}$-modules on $\Bun_G$.

Since $K_{d,\rho}$ is built from the kernels $K_{\rho^{(n)},x}$ of the
Hecke operators according to formula \eqref{kernel of kdrho}, we need
to perform the same construction with the sheaves ${\mc
  K}_{\rho^{(n)},x}$ defined in \secref{gen hecke}, which are the
geometric counterparts of the $K_{\rho^{(n)},x}$.

Introduce the algebraic stack ${\mc H}_{d}$ over the field $k$ (which
is either a finite field or $\C$) that classifies the data
\begin{equation}    \label{objects}
(D,E,E',\phi),
\end{equation}
where
\begin{equation}
D = \sum_{i=1}^r n_i [x_i]
\end{equation}
is an effective divisor on our curve $X$ of degree $d$ (equivalently,
a point of $X^{(d)}$), $E$ and $E'$ are two principal $G$-bundles on
$X$, and $\phi$ is an isomorphism between them over $X-\on{supp}(D)$.

Let ${\mc H}_{d,\mu}$ be the closed substack of ${\mc H}_d$ that
classifies the quadruples \eqref{objects} as above, satisfying the
condition $\inv_{x_i}(E,E')\leq n_i \mu$.

Consider the morphism
$$
{\mc H}_{d,\mu} \to X^{(d)}\times \Bun_G
$$
sending the quadruple \eqref{objects} to $(D,E)$. Its fiber over a
fixed $D=\sum_i n_i [x_i]$ and $E\in\Bun_G$ isomorphic to the product
\begin{equation}    \label{product}
\prod_{i=1}^r \ovl\Gr_{[n_i \mu_i]}.
\end{equation}
Our sheaf ${\mc K}_{d,\rho}$ on ${\mc H}_{d,\mu}$ will have the
property that its restriction to these fibers are isomorphic to
isomorphic to
\begin{equation}    \label{restriction}
\boxtimes_{i=1}^d \IC_{\rho^{(n_i)}},
\end{equation}
where $ \ovl\Gr_{[n_i \mu_i]}$ is the perverse sheaf on $\ovl\Gr_{[n_i
  \mu_i]}$ corresponding to the representation $\rho^{(n_i)}$ (the
$n_i$th symmetric power of $\rho$) under the geometric Satake.

The precise definition of ${\mc K}_{d,\rho}$ is given in \cite{FN},
and here we give a less formal, but more conceptual construction.

\medskip

We will use the following result from the Appendix of \cite{FGV} that
generalizes the geometric Satake correspondence to the case of
``moving points''.

For any partition ${\mathbf d} = (d^1,\ldots,d^k)$ of $d$, consider
the open subset $\ovc{X}{}^{\mathbf d}$ of $X^{(d^1)} \times \ldots
\times X^{(d^k)}$ consisting of $k$--tuples of divisors
$(D_1,\ldots,D_k)$, such that $\on{supp} D_i \cap \on{supp} D_j =
\emptyset$, if $i\neq j$. Denote the map $\ovc{X}{}^{\mathbf d} \to
X^{(d)}$ by $p_{\mathbf d}$.  We introduce an abelian category ${\mc
A}^d$ as follows.  The objects of ${\mc A}^d$ are perverse sheaves
$\F$ on $X^{(d)}$ equipped with a $\LG$--action, together with the
following extra structure: for each partition ${\mathbf d}$, the sheaf
$p_{\mathbf d}^*(\F)$ should carry an action of $k$ copies of $\LG$,
compatible with the original $\LG$--action on $\F$ with respect to
the diagonal embedding $\LG \to (\LG)^{\times k}$. For different
partitions, these actions should be compatible in the obvious
sense. In addition, it is required that whenever $d^i=d^j, i \neq j$,
the action of the $i$th and $j$th copies of $\LG$ on $p_{\mathbf
d}^*(\F)$ should be intertwined by the corresponding natural
$\Z_2$--action on $\ovc{X}{}^{\mathbf d}$.

The claim of \cite{FGV} is that the category ${\mc A}^d$ is equivalent
to a certain category of perverse sheaves on ${\mc
  H}_d$. For instance, if $d=1$, then the constant sheaf on
$X^{(1)} = X$ with the stalk $\rho = \rho_\mu$, a finite-dimensional
representation of $\LG$, goes to a perverse sheaf on ${\mc
  H}_1$, whose restriction to each fiber of the projection
${\mc H}_1 \to X \times \Bun_G$ (which is isomorphic to
$\Gr$) is the perverse sheaf $\IC_\mu$.

More generally, for each representation $\rho$ of $\LG$ we have an
object $\underline{\rho}^{(d)}$ of the category ${\mc A}^d$ defined as
follows:
$$
\underline{\rho}^{(d)} = \left(\pi^d_*(\underline{\rho}^{\boxtimes
  d})\right)^{S_d},
$$
where $\underline{\rho}$ is the constant sheaf on $X$ with the stalk
$\rho$, $\pi^d: X^d \to X^{(d)}$ is the natural projection, and $S_d$
is the symmetric group on $d$ letters. It is easy to see that it
carries the structures from the above definition. Moreover, it is an
irreducible object of the category ${\mc A}^d$ if $\rho$ is
irreducible. Note that the stalk of
$\underline{\rho}^{(d)}$ at the divisor $D \subset X^{(d)}$ is the
tensor product
$$
\bigotimes_i \rho^{(n_i)},
$$
where $\rho^{(n)}$ denotes the $n$th symmetric power of $\rho$.

Now, define the sheaf ${\mc K}_{d,\mu}$ as the irreducible perverse
sheaf on ${\mc H}_d$ corresponding to $\underline{\rho}^{(d)}$. Then
its restriction to the fiber at $(D,{\mc M}) \in X^{(d)} \times
\Bun_G$ for general $D$ is given by formula \eqref{restriction}, and
so the ``trace of Frobenius'' function corresponding to ${\mc
  K}_{d,\mu}$ is the function $K_{d,\mu}$ given by formula
\eqref{kernel of kdrho}.

Note that the highest weights of the irreducible representations in
the decomposition of $\rho^{(n)}$ are less than or equal to $n\mu$,
and so the sheaf $\IC_{\mu}^{(n)}$ is supported in
$\ol{Gr}_{n\mu}$. Therefore ${\mc K}_{d,\mu}$ is supported on the
closed substack ${\mc H}_{d,\mu}$ introduced above.

\subsection{The functor ${\mathbb K}_{d,\rho}$}    \label{action of K}

We now have a morphism
$$\Hc^\BD_d\to X^{(d)}\times \Bun_G\times\Bun_G$$ and a perverse sheaf
$\wF_{d,\rho}$ on $\Hc^\BD_d$ attached to any finite-dimensional
representation $\rho$ of $\LG$. Let us denote by $p_d$ and $p'_d$ the
two projections $\Hc^\BD_d \to \Bun_G$ mapping the quadruple
\eqref{objects} to $E$ and $E'$. We use the sheaf $\wF_{d,\rho}$ to
define an integral transform functor ${\mathbb K}_{d,\rho}$ on the
derived category $D(\Bun_G)$ of $\ell$-adic sheaves on $\Bun_G$ by the
formula
\begin{equation}
{\mathbb K}_{d,\rho}({\mc F}) =p_{d!}(p'_d{}^*({\mc
    F}) \otimes \wF_{d,\rho}).
\end{equation}

Now we compute the ``eigenvalues'' of the functors ${\mathbb
  K}_{d,\rho}$ on ${\mc F}_\sigma$. The following result, which is the
geometrization of formulas \eqref{ldrho} and \eqref{D Tr} for the
eigenvalues of ${\mb K}_{d,\rho}$ on $f_\sigma$, is Lemma 2.6 of
\cite{FLN}.

\begin{lem} \label{Lemma 1 bis}
If ${\mc F} = {\mc F}_\sigma$ is a Hecke eigensheaf with eigenvalue
$\sigma$, then for every representation $\rho$ of $\LG$ and every
positive integer $d$ we have
\begin{equation} \label{d-th coeff}
{\mathbb K}_{d,\rho}({\mc
  F}_\sigma)=H^\bullet(X^{(d)},\Lc_{\rho\circ\sigma}^{(d)}) \otimes {\mc
  F}_\sigma.
\end{equation}
\end{lem}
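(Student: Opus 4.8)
The plan is to factor $\mathbb K_{d,\rho}$ through $X^{(d)}\times\Bun_G$ and to reduce the computation to the Hecke eigensheaf property \eqref{iso1} for a single moving point, iterated $d$ times. Let $a_d\colon \Hc^\BD_d\to X^{(d)}\times\Bun_G$ be the morphism remembering the divisor $D$ and the bundle $E$, and let $q\colon X^{(d)}\times\Bun_G\to\Bun_G$ be the projection, so that $p_d=q\circ a_d$. Set
\[
\widetilde{\mathbb K}_{d,\rho}(\mc F):=a_{d!}\bigl(p'_d{}^*(\mc F)\otimes \wF_{d,\rho}\bigr),
\]
a functor $D(\Bun_G)\to D(X^{(d)}\times\Bun_G)$; since $\wF_{d,\rho}$ is supported on $\Hc_{d,\mu}$, which is proper over $X^{(d)}\times\Bun_G$ with fibres as in \eqref{product}, lower-shriek and lower-star pushforwards agree here and $\mathbb K_{d,\rho}=q_!\circ\widetilde{\mathbb K}_{d,\rho}$. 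Granting the refined statement
\[
\widetilde{\mathbb K}_{d,\rho}(\mc F_\sigma)\simeq \Lc_{\rho\circ\sigma}^{(d)}\boxtimes\mc F_\sigma
\]
on $X^{(d)}\times\Bun_G$, the K\"unneth/projection formula, together with the fact that $X^{(d)}$ is smooth and projective (so that $R\Gamma_c(X^{(d)},-)=R\Gamma(X^{(d)},-)$), gives
\[
\mathbb K_{d,\rho}(\mc F_\sigma)\simeq R\Gamma\bigl(X^{(d)},\Lc_{\rho\circ\sigma}^{(d)}\bigr)\otimes\mc F_\sigma = H^\bullet\bigl(X^{(d)},\Lc_{\rho\circ\sigma}^{(d)}\bigr)\otimes\mc F_\sigma,
\]
which is \eqref{d-th coeff}. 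So everything reduces to the refined statement.

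To prove it, pull back along the finite map $\pi^d\colon X^d\to X^{(d)}$. Over the open locus of $X^d$ where the $d$ points are pairwise distinct, $\Hc^\BD_d$ is the $d$-fold fibre product over $\Bun_G$ of copies of the degree-one Hecke stack $\Hc^\BD_1$, and $\wF_{d,\rho}$ restricts to the external product $\wF_{1,\rho}^{\boxtimes d}$ of the degree-one kernels; hence $\widetilde{\mathbb K}_{d,\rho}$, pulled back to this locus, is the $d$-fold composition of the Hecke functor $\mathbb H_\rho$ of \secref{gen hecke}, each copy acting on the $\Bun_G$-variable. Iterating the eigensheaf identity \eqref{iso1} $d$ times --- using that these isomorphisms are compatible with the tensor (fusion) structure on the Hecke category, which is what makes the iteration meaningful and, below, lets it propagate across the diagonals --- produces $\Lc_{\rho\circ\sigma}^{\boxtimes d}\boxtimes\mc F_\sigma$ on this locus. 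This isomorphism extends to all of $X^d$: both sides are cohomological shifts of perverse sheaves in the $X^d$-directions, $\Lc_{\rho\circ\sigma}^{\boxtimes d}$ is a local system, and the way $\wF_{1,\rho}^{\boxtimes d}$ glues across collisions of points is governed precisely by the ``moving points'' form of geometric Satake from the Appendix of \cite{FGV}. Finally, $\wF_{d,\rho}$ was defined in \secref{functor Kdrho} as the object of $\mc A^d$ attached to $\underline{\rho}^{(d)}=(\pi^d_*\underline{\rho}^{\boxtimes d})^{S_d}$, so taking $S_d$-invariants of the pushforward along $\pi^d$ turns $\wF_{1,\rho}^{\boxtimes d}$ into $\wF_{d,\rho}$ on the kernel side and, by the very definition \eqref{stalk} of $\Lc^{(d)}$, turns $\Lc_{\rho\circ\sigma}^{\boxtimes d}$ into $\Lc_{\rho\circ\sigma}^{(d)}$ on the eigenvalue side. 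This proves the refined statement, hence the lemma.

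The step I expect to be the main obstacle is the extension across the diagonals in the previous paragraph: one must know both that $\wF_{d,\rho}$ is the ``correct'' (intersection-cohomology) gluing of $\wF_{1,\rho}^{\boxtimes d}$ as the points collide, and that the iterated eigensheaf isomorphism extends there. Both are consequences of the compatibility of the Hecke eigensheaf structure with the monoidal (fusion) structure on the Hecke category, combined with the moving-points geometric Satake equivalence of \cite{FGV}. The occurrence of the symmetric powers $\rho^{(n_i)}=\on{Sym}^{n_i}(\rho)$ at a point $x_i$ of multiplicity $n_i$ in the description \eqref{kernel of kdrho} of the kernel is exactly the shadow, on stalks, of these $S_d$-invariants, matching the stalk formula \eqref{stalk} for $\Lc^{(d)}$.
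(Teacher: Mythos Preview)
The paper does not actually prove this lemma; it merely states it and attributes it to \cite{FLN}, Lemma~2.6. So there is no proof in the present paper to compare your argument against.

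That said, your argument is correct and is the natural one. Factoring through $X^{(d)}\times\Bun_G$, proving the refined identity $\widetilde{\mathbb K}_{d,\rho}(\mc F_\sigma)\simeq \Lc_{\rho\circ\sigma}^{(d)}\boxtimes\mc F_\sigma$, and then pushing forward along the projection (using properness of $X^{(d)}$ and K\"unneth) is exactly how one expects to proceed. Your reduction to $X^d$, iterating the Hecke eigensheaf property \eqref{iso1} on the locus of distinct points, and then descending via $S_d$-invariants matches both the definition of $\wF_{d,\rho}$ through the category $\mc A^d$ of \cite{FGV} and the definition \eqref{stalk} of $\Lc^{(d)}$. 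You have also correctly isolated the genuine content: the extension of the iterated eigensheaf isomorphism across the diagonals, which is precisely what the tensor-category compatibility clause in the definition of a Hecke eigensheaf, combined with the moving-points geometric Satake, is designed to guarantee.
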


Taking the trace of the Frobenius on the RHS of formula \eqref{d-th
  coeff}, we obtain the eigenvalues of ${\mb K}_{d,\rho}$ on the Hecke
eigenfunctions $f_\sigma$ corresponding to both tempered and
non-tempered automorphic representations.

As an example, we compute the action of ${\mathbb K}_{d,\rho}$ on the
constant sheaf on $\Bun_G$:
$$
{\mathbb K}_{d,\rho_\mu}(\underline\Ql) \simeq
H^\bullet(X^{(d)},\Lc_{\rho_\mu \circ \sigma_0}^{(d)}) \otimes
\underline\Ql,
$$
where $\rho_\mu \circ \sigma_0$ is the complex described in
\secref{geom Arthur}. 

At the level of functions, we are multiplying the constant function on
$\Bun_G(\Fq)$ (corresponding to the trivial representation of
$G(\AD_F)$) by
\begin{equation}    \label{prod of zeta1}
\on{Tr}(\on{Fr},H^\bullet(X^{(d)},\Lc_{\rho_\mu \circ \sigma_0}^{(d)})) =
\prod_{i \in P(\rho_\mu)} \zeta(s-i)^{\dim \rho_{\mu,i}},
\end{equation}
where $P(\rho_\mu)$ is the set of possible values of the principal
grading on $\rho_\mu$ and $\rho_{\mu,i}$ is the corresponding subspace
of $\rho_\mu$.

For example, if $\rho_\mu$ is the defining representation of $GL_n$,
then formula \eqref{prod of zeta1} reads
\begin{equation}    \label{prod of zeta}
\prod_{k=0}^{n-1} \zeta(s+k-(n-1)/2).
\end{equation}

\subsection{Geometrization of the orbital side}    \label{v sp}

Now we are ready to construct a geometrization of the orbital side of
the trace formula.

\medskip

Let us form the Cartesian square
\begin{equation}    \label{Cartesian}
\begin{CD}
{\mc M}_{d} @>{{\Delta}_\Hc}>> {\mc H}_{d} \\
@VV{{\mb p}_\Delta}V @VV{{\mb p}}V \\
\Bun_G @>{\Delta}>> \Bun_G \times
\Bun_G
\end{CD}
\end{equation}
where $\Delta$ is the diagonal morphism. Thus, ${\mc
  M}_{d}$ is the fiber product of $X^{(d)} \times \Bun_G$ and ${\mc
  H}_{d}$ with respect to the two morphisms to $X^{(d)} \times \Bun_G
\times \Bun_G$.

Let ${\mc K} = {\mc K}_{d,\rho}$ be a sheaf introduced in
\secref{functor Kdrho} and ${\mathbb K} = {\mathbb K}_{d,\rho}$ the
corresponding functor on the derived category $D(\Bun_G)$ of sheaves
on $\Bun_G$. The following discussion is applicable to more general
functors that are compositions of ${\mathbb K}_{d,\rho}$ and Hecke
functors ${\mathbb H}_{\rho_i,x_i}$ at finitely many points $x_i \in
|X|$ and their kernels (see \cite{FN}).

Let
$$
\ol{\mc K} = {\mb p}_*({\mc K})
$$
(note that ${\mathbf p}$ is proper over the support of ${\mc
  K}$). This is a sheaf on $\Bun_G \times \Bun_G$ which is the kernel
of the functor $\K$. Let $\ol{K}$ be the corresponding function on
$\Bun_G(k) \times \Bun_G(k)$.

Recall that
$$\Bun_G(k) = G(F) \bs G(\AD_F)/G(\OO_F).$$
The right hand side of \eqref{tr} may be rewritten as
\begin{equation}    \label{sum b}
\sum_{P \in \Bun_G(k)} \frac{1}{|\on{Aut}(V)|} \ol{K}(P,P)
\end{equation}
Using the Lefschetz formula for algebraic stacks developed by
K. Behrend \cite{Behrend} (see also \cite{BeDh}), we find that,
formally, the alternating sum of the traces of the arithmetic
Frobenius on the graded vector space
\begin{equation}    \label{rhs-new}
H^\bullet(\Bun_G,\Delta^!(\ol{\mc K})) = H^\bullet(\Bun_G,\Delta^!
{\mb p}_* (\ol{\mc K}))
\end{equation}
is equal (up to a power of $q$) to the sum \eqref{sum b}. Therefore
the vector space $H^\bullet(\Bun_G,\Delta^! {\mb p}_* (\ol{\mc K}))$
is a geometrization of the orbital (right hand) side of the trace
formula \eqref{tr}.

By base change,
\begin{equation}    \label{adjunction}
H^\bullet(\Bun_G,\Delta^! {\mb p}_*({\mc K})) =
H^\bullet(\Bun_G,{\mb p}_{\Delta *} \Delta_{\Hc}^!({\mc K})) =
H^\bullet({\mc M}_d,\Delta_{\Hc}^!({\mc K})).
\end{equation}
We will use the space
\begin{equation}    \label{geom orb side}
H^\bullet({\mc M}_d,\Delta_{\Hc}^!({\mc K}))
\end{equation}
as the geometrization of the orbital side.

Here is another way to express it: Let ${\mathbb D}$ be the Verdier
duality on ${\mc H}_{d,\mu}$. It follows from the construction and the
fact that ${\mathbb D}(\on{IC}({\mc H}_\mu)) \simeq \on{IC}({\mc
  H}_\mu)$ that
\begin{equation} \label{verdier}
{\mathbb D}({\mc K}_{d,\rho}) \simeq {\mc K}_{d,\rho}[2(d+\dim
\Bun_G)] (d+\dim \Bun_G).
\end{equation}
Therefore, up to a shift and Tate twist, the last space in
\eqref{adjunction} is isomorphic to
\begin{equation}    \label{comp supp}
H^\bullet({\mc M}_d,{\mathbb D}(\Delta_{\Hc}^*({\mathcal K}))) \simeq
H^\bullet_c({\mc M}_d, \Delta_{\Hc}^*({\mathcal K}))^*,
\end{equation}
where $H^\bullet_c(Z,{\mc F})$ is understood as $f_!({\mc F})$, where
$f: Z \to \on{pt}$. Here we use the results of Y. Laszlo and M. Olsson
\cite{LO} on the six operations on $\ell$-adic sheaves on algebraic
stacks and formula \eqref{verdier}.

\medskip

If $X$ is a curve over $\C$, then the vector space \eqref{geom orb
  side} still makes sense if we consider ${\mc K}$ as an object of
either the derived category of constructible sheaves or of ${\mc
  D}$-modules on $\Bun_G$.

\subsection{The moduli stack of $G$-pairs}    \label{orbital}

We now give a description of the stack ${\mc M}_d$ that is reminiscent
and closely related to the Hitchin moduli stack of Higgs bundles on
the curve $X$ \cite{Hit1}. We will also conjecture that
$\Delta_{\Hc}^!({\mc K}_{d,\rho})$ is a pure perverse sheaf on ${\mc
  M}_d$.

Recall that the sheaf ${\mc K}_{d,\rho}$ is supported on the substack
$\Hc_{d,\mu}$ of $\Hc$. Let ${\mc M}_{d,\mu}$ be the fiber product of
$X^{(d)} \times \Bun_G$ and ${\mc H}_{d,\mu}$ with respect to the two
morphisms to $X^{(d)} \times \Bun_G \times \Bun_G$. In other words, we
replace $\Hc_d$ by $\Hc_{d,\mu}$ in the upper right corner of the
diagram \eqref{Cartesian}. The sheaf $\Delta_{\Hc}^!({\mc
  K}_{d,\rho})$ is supported on ${\mc M}_{d,\mu} \subset {\mc M}_d$,
and hence the vector space \eqref{geom orb side} is equal to
\begin{equation}    \label{restr1}
H^\bullet({\mc M}_{d,\mu},\Delta_{\Hc}^!({\mc K}_{d,\rho})).
\end{equation}

In this section we show, following closely \cite{FN}, that the stack
${\mc M}_{d,\mu}$ has a different interpretation as a moduli stack of
objects that are closely related to {\em Higgs bundles}. More
precisely, we will have to define ``group-like'' versions of Higgs
bundles (we call them ``$G$-pairs''). The moduli spaces of (stable)
Higgs bundles has been introduced by Hitchin \cite{Hit1} (in
characteristic $0$) and the corresponding stack (in characteristic
$p$) has been used in \cite{Ngo:FL} in the proof of the fundamental
lemma. In addition, there is an analogue ${\mc A}_{d,\mu}$ of the
Hitchin base and a morphism $h_{d,\mu}: {\mc M}_{d,\mu} \to {\mc
  A}_{d,\mu}$ analogous to the Hitchin map. We hope that this Higgs
bundle-like realization of ${\mc M}_{d,\mu}$ and the morphism
$h_{d,\mu}$ can be used to derive the decompositions \eqref{Phi and
  Psi} and \eqref{sum2}. In the next few subsections we discuss this
in more detail.

\subsection{Definition of the moduli stack}    \label{def stack}

Let us assume that $G$ is split over $X$ and $\mu$ is a fixed dominant
coweight.  The groupoid ${\mc M}_{d,\mu}(k)$ classifies the triples
$$
(D,E,\varphi),
$$
where $D = \sum_i n_i [x_i]\in X^{(d)}$ is an effective divisor of degree $d$,
$E$ is a principal $G$-bundle on a curve $X$, and $\varphi$ is a section
of the adjoint group bundle
$$
\on{Ad}(E) = E \underset{G}\times G
$$
(with $G$ acting on the right $G$ by the adjoint action) on $X {-}
\on{supp}(D)$, which satisfies the local conditions
\begin{equation}\label{invariant 4}
{\rm inv}_{x_i}(\varphi) \leq n_i\mu
\end{equation}
at $D$. Since we have defined $\Hc_{d,\mu}$ as the image of
$\Hc^d_\mu$ in $\Hc^\BD_d$, it is not immediately clear how to make
sense of these local conditions over an arbitrary base (instead of
$\on{Spec}(k)$). There is in fact a functorial description of
$\Hc_{d,\mu}$ and of ${\mc M}_{d,\mu}$ that we will now explain.

We will assume that $G$ is semi-simple and simply-connected. The
general case is not much more difficult.  Let
$\omega_1,\ldots,\omega_r$ denote the fundamental weights of $G$ and
$$\rho_{\omega_i}:G\to \GL(V_{\omega_i})$$ the Weyl modules of highest
weight $\omega_i$. Using the natural action of $G$ on ${\rm End}
(V_{\omega_i})$, we can attach to any $G$-principal bundle $E$ on $X$
the vector bundle
\begin{equation}
\on{End}_{\omega_i}(E)=E \underset{G}\times {\rm End}(V_{\omega_i}).
\end{equation}
The section $\varphi$ of $\on{Ad}(E)$ on $X{-}\on{supp}(D)$ induces a
section $\on{End}_{\omega_i}(\varphi)$ of the vector bundle
$\on{End}_{\omega_i}(E)$ on $X{-}\on{supp}(D)$. The local conditions
(\ref{invariant 4}) are equivalent to the property that for all $i$,
$\on{End}_{\omega_i}(\varphi)$ may be extended to a section
$$\varphi_i\in \on{End}_{\omega_i}(E)\otimes_{\Oc_X} \Oc_X(\langle
\mu,\omega_i\rangle D).$$ Though
the $\varphi_i$ determine $\varphi$, we will keep $\varphi$ in the
notation for convenience.

Thus, we obtain a provisional functorial description of ${\mc
M}_{d,\mu}$ as the stack classifying the data
\begin{equation}\label{phi phi}
(D,E,\varphi,\varphi_i)
\end{equation}
with $D\in X^{(d)}$, $E\in \Bun_G$, $\varphi$ is a section of $\on{Ad}(E)$
on $X{-}\on{supp}(D)$, $\varphi_i$ are sections of
$\on{End}_{\omega_i}(E)\otimes_{\Oc_X} \Oc_X(\langle
\mu,\omega_i\rangle D)$ over $X$ such that
$$\varphi_i|_{X{-}\on{supp}(D)}=\on{End}_{\omega_i}(\varphi).$$

Sometimes it will be more convenient to package the data
$(\varphi,\varphi_i)$ as a single object $\wt\varphi$ which has values
in the closure of $$(t_i \rho_{\omega_i}(g))_{i=1}^r \subset
\prod_{i=1}^r \on{End}(V_{\omega_i}).$$ where $g\in G$ and
$t_1,\ldots,t_r \in \Gm$ are invertible scalars. This way Vinberg's
semi-group \cite{Vinberg} makes its appearance naturally in the
description of ${\mc M}_{d,\mu}$.

\subsection{Comparison with the Hitchin moduli
  stack}    \label{comparison}

It is instructive to note that the stack ${\mc M}_{d,\mu}$ is very
similar to the moduli stacks of Higgs bundles (defined originally by
Hitchin \cite{Hit1} and considered, in particular, in
\cite{Ngo:Endo,Ngo:FL}). The latter stack -- we will denote it by
${\mc N}_D$ -- also depends on the choice of an effective divisor
$$
D = \sum_i n_i [x_i]
$$
on $X$ and classifies pairs $(E,\phi)$, where $E$ is again a
$G$-principal bundle on $X$ and $\phi$ is a section of the adjoint
{\em vector bundle}
$$
\on{ad}(E) = E \underset{G}\times \g
$$
(here $\g = \on{Lie}(G)$) defined on $X{-}\on{supp}(D)$, which is
allowed to have a pole of order at most $n_i$ at $x_i$. In other
words,
$$
\phi \in H^0(X,\on{ad}(E) \otimes {\mc O}_X(D)).
$$
This $\phi$ is usually referred to as a {\em Higgs field}.

In both cases, we have a section which is regular almost everywhere,
but at some (fixed, for now) points of the curve these sections are
allowed to have singularities which are controlled by a divisor. In
the first case we have a section $\varphi$ of adjoint group bundle
$\on{Ad}(E)$, and the divisor is $D \cdot \mu$, considered as an
effective divisor with values in the lattice of integral weights of
$\LG$. In the second case we have a section $\phi$ of the adjoint Lie
algebra bundle $\on{ad}(E)$, and the divisor is just the ordinary
effective divisor.

An important tool in the study of the moduli stack ${\mc N}_D$ is the
{\em Hitchin map} \cite{Hi2} from ${\mc N}_D$ to an affine space
$$
{\mc A}_D  \simeq \bigoplus_i
H^0(X,{\mc O}_X((m_i+1)D),
$$
where the $m_i$'s are the exponents of $G$. It is obtained by, roughly
speaking, picking the coefficients of the characteristic polynomial of
the Higgs field $\phi$ (this is exactly so in the case of $GL_n$; but
one constructs an obvious analogue of this morphism for a general
reductive group $G$, using invariant polynomials on its Lie
algebra). A point $a \in A_E$ then records a stable conjugacy class in
$\g(F)$, where $F$ is the function field, and the number of points in
the fiber over $a$ is related to the corresponding orbital integrals
in the Lie algebra setting (see \cite{Ngo:Endo,Ngo:FL}).

More precisely, ${\mc A}_D$ is the space of section of the bundle
$${\mathfrak t}/W \underset{\Gm}\times {\mc O}_X(D)^\times$$ obtained by
twisting ${\mathfrak t}/W = \on{Spec}(k[{\mathfrak t}]^W)$, equipped with
the $\Gm$-action inherited from ${\mathfrak t}$, by the $\Gm$-torsor
${\mc O}_X(D)^\times$ on $X$ attached to the line bundle ${\mc
O}_X(D)$. Recall that $k[\mathfrak t/W]$ is a polynomial algebra with
homogeneous generators of degrees $d_1+1,\ldots,d_r+1$.

\subsection{Analogue of the Hitchin map for
  $G$-pairs}    \label{analogue}

In our present setting, we will have to replace ${\mathfrak t}/W$ by
$T/W$.  Recall that we are under the assumption that $G$ is
semi-simple and simply-connected. First, recall the isomorphism of
algebras
$$k[G]^G=k[T]^W=k[T/W].$$ It then follows from \cite{Bourbaki},
Th. VI.3.1 and Ex. 1, that $k[G]^G$ is a polynomial
algebra generated by the functions
$$g\mapsto \on{tr}(\rho_{\omega_i}(g)),$$ where $\omega_1,\ldots,
\omega_r$ are the fundamental weights of $G$.

For a fixed divisor $D$, the analogue of the Hitchin map for ${\mc
M}_{d,\mu}(D)$ (the fiber of ${\mc M}_{d,\mu}$ over $D$) is the
following map:
\begin{equation}
{\mc M}_{d,\mu}(D) \to \bigoplus_{i=1}^r H^0(X,\Oc_X(\langle
\mu,\omega_i \rangle D ))
\end{equation}
defined by attaching to $(D,E,\varphi,\varphi_i)$ the collection of traces
$${\rm tr}(\varphi_i)\in H^0(X,\Oc_X(\langle \mu,\omega_i \rangle D )).$$
By letting $D$ vary in $X^{(d)}$, we obtain a fibration
$$h_{d,\mu}:{\mc M}_{d,\mu} \to {\mc A}_{d,\mu}$$ where ${\mc
A}_{d,\mu}$ is a vector bundle over $X^{(d)}$ with the fiber
$\bigoplus_{i=1}^r H^0(X,\Oc_X(\langle \mu,\omega_i \rangle D ))$ over
an effective divisor $D\in X^{(d)}$.

The morphism $h_{d,\mu}:\M_{d,\mu} \to \Ac_{d,\mu}$ is very similar to
the Hitchin fibration.

Recall that according to \secref{v sp}, the cohomology of $\M_{d,\mu}$
with coefficients in $\Delta^!({\mc K}_{d,\rho})$ is a geometrization
of the orbital side of the trace formula \eqref{tr}. This side of the
trace formula may be written as a sum \eqref{sum over gamma} of
orbital integrals. To obtain a geometrization of an {\em individual}
orbital integral appearing in this sum, we need to take the cohomology
of the restriction of $\Delta^!({\mc K}_{d,\rho})$ to the
corresponding {\em fiber} of the map $h_{d,\mu}$ (see \cite{FN},
Section 4.3). Thus, the picture is very similar to that described in
\cite{Ngo:FL} in the Lie algebra case.

\medskip

In \cite{FLN} we considered the trace formula in the general ramified
setting. Like in the above discussion, we obtained a useful
interpretation of the stable conjugacy classes in $G(F)$ as $F$-points
of the Hitchin base described above (in \cite{FLN} we called it
``Steinberg--Hitchin base''). In particular, if the group $G$ is
simply-connected, this base has the structure of a vector space over
$F$. The orbital side of the trace formula may therefore be written as
the sum over points of this vector space, and we can apply the Poisson
summation formula to it (actually, in order to do this we need to
overcome several technical problems). We used this to isolate the
contribution of the trivial representation to the trace formula, which
appears as the contribution of the point $0$ in the dual sum, see
\cite{FLN}. The question now is how to separate the contributions of
other non-tempered automorphic representations of $G(\AD_F)$.

\subsection{Example of $GL_2$}    \label{ex gl2}

Let us specialize now to the case when $G=GL_2$ and $\mu=(2,0)$. Here
we need to make one adjustment of the general set-up; namely, instead
of $$GL_2(F) \bs GL_2({\mathbb A}_F/GL_2({\mathcal O}_F),$$ which is a
union of infinitely many components (hence non-compact), we will
consider a compact quotient $$GL_2(F) a^{\Z} \bs GL_2({\mathbb
  A})/GL_2({\mathcal O}),$$ where $a$ is an element in the center
$Z({\mathbb A}_F)$. We choose as $a$ the element equal to $1$ at all $x
\in X$, except for a fixed point that we denote by $\infty$. We set
$a$ equal to a uniformizer $t_\infty$ at $\infty$. Thus, we do not
allow ramification at $\infty$, but rather restrict ourselves to
automorphic representations on which this $a$ acts trivially, in order
to make sure that our integrals converge.

Since we now identify the rank two bundle ${\mc M}$ with ${\mc
M}(k[\infty])$ for all $k \in \Z$, which is an operation shifting the
degree of the bundle by an even integer $2k$, this stack has two
connected components, corresponding to the degree of the bundle ${\mc
M}$ modulo $2$.

We denote them by ${\mc H}^{\Delta,0}_{d,\mu,\infty}$ and ${\mc
  H}^{\Delta,1}_{d,\mu,\infty}$. Let us focus on the first one and set
$\mu=(2,0)$ (so that $\rho$ is the symmetric square of the defining
vector representation).

\medskip

This ${\mc H}^{\Delta,0}_{d,(2,0),\infty}$ is the moduli stack of the
following data:
$$
(D,{\mc M},s: {\mc M} \hookrightarrow {\mc M}(d[\infty])),
$$
where $D$ is an effective divisor on $X$ of degree $d$,
\begin{equation}    \label{D}
D = \sum_{i=1}^\ell n_i [x_i],
\end{equation}
${\mc M}$ is a rank two vector bundle on $X$ of degree $0$, and $s$ is
an injective map such that
$$
{\mc M}(d[\infty])/s({\mc M}) \simeq \bigoplus_i {\mc T}_{x_i},
$$
where the ${\mc T}_{x_i}$ are torsion sheaves supported at $x_i$. For
instance, if all points in $D$ have multiplicity $1$, that is, $n_i=1$
for all $i$, then ${\mc T}_{x_i} = {\mc O}_{2x_i}$ or ${\mc O}_{x_i}
\oplus {\mc O}_{x_i}$. But if $n_i>1$, then it splits in one of the
following ways:
$$
{\mc T}_{x_i} = {\mc O}_{k_i x_i} \oplus {\mc O}_{(2n_i-k_i)x_i},
\qquad k_i=0,\ldots,n_i
$$
(these correspond to the strata in the affine Grassmannian which
lie in the closure of $\on{Gr}_{n_i\mu}$).

\medskip

We define an analogue of
the Hitchin map by taking the
coefficients of the characteristic polynomial of $\varphi$:
\begin{equation*}
\begin{array}{cccccc}
{\mc H}^{\Delta,0}_{d,\mu,\infty} &\to& & H^0(X,{\mc
  O}_X(d[\infty])) &\oplus& H^0(X,{\mc O}_X(2d[\infty])), \\
(D,{\mc M},\varphi) &\mapsto& &  (\on{Tr} \varphi & , & \on{det}
  \varphi).
\end{array}
\end{equation*}
We denote the first component of this map by $b_1$ and the second
component by $b_2$.

What are the allowed values of $b_1, b_2$? By construction, $b_2$ is a
section of ${\mc O}_X(2d[\infty])$, whose divisor of zeros is $2D$.

\begin{lem} Any such section $b_2$ of ${\mc O}_X(2d[\infty])$ has the
  form $b_2 = \eta^2$, where $\eta$ is a non-zero section of ${\mc L}
  \otimes {\mc O}_X(d[\infty])$, where ${\mc L}$ is a square root of
  the trivial line bundle (that is, ${\mc L}^{\otimes 2} \simeq {\mc
  O}_X$) having the divisor of zeros equal to $D$.
\end{lem}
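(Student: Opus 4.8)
The plan is to write down $\eta$, the line bundle ${\mc L}$, and the trivialization ${\mc L}^{\otimes 2}\simeq{\mc O}_X$ directly from the pair $(D,b_2)$, using only that $b_2$ is a nonzero section of ${\mc O}_X(2d[\infty])$ with divisor of zeros exactly $2D$. This last fact is the ``by construction'' statement preceding the lemma: $b_2=\det(s)$ is a unit away from $\on{supp}(D)$ because $s$ is an isomorphism there, and its order of vanishing at $x_i$ equals the length of the torsion sheaf ${\mc T}_{x_i}$, namely $2n_i$, so $\on{div}(b_2)=\sum_i 2n_i[x_i]=2D$.

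Concretely, I would set ${\mc L}:={\mc O}_X(D)\otimes{\mc O}_X(d[\infty])^{-1}$, a line bundle of degree $0$ on $X$. The tautological isomorphism ${\mc L}\otimes{\mc O}_X(d[\infty])\simeq{\mc O}_X(D)$ lets me define $\eta$ as the section corresponding to the canonical section $1_D$ of ${\mc O}_X(D)$; then $\eta\neq 0$ and its divisor of zeros is $D$. Squaring, $\eta^{\otimes 2}$ is a section of ${\mc L}^{\otimes 2}\otimes{\mc O}_X(2d[\infty])$ which under the canonical identification of this sheaf with ${\mc O}_X(2D)$ becomes $1_{2D}$. On the other side, I would use the elementary fact that a nonzero section of a line bundle ${\mc N}$ on $X$ with divisor of zeros $E$ is the same datum as an isomorphism ${\mc N}\simeq{\mc O}_X(E)$ carrying that section to $1_E$: applied to $b_2$ this gives a canonical isomorphism ${\mc O}_X(2d[\infty])\simeq{\mc O}_X(2D)$ sending $b_2\mapsto 1_{2D}$. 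Composing the two yields ${\mc L}^{\otimes 2}\otimes{\mc O}_X(2d[\infty])\simeq{\mc O}_X(2D)\simeq{\mc O}_X(2d[\infty])$, which is necessarily of the form $\tau\otimes\on{id}$ for a unique trivialization $\tau:{\mc L}^{\otimes 2}\simeq{\mc O}_X$ (since $\operatorname{Isom}({\mc L}^{\otimes 2}\otimes{\mc O}_X(2d[\infty]),{\mc O}_X(2d[\infty]))\cong\operatorname{Isom}({\mc L}^{\otimes 2},{\mc O}_X)$ naturally); tracing $\eta^{\otimes 2}$ through the composite shows that $\tau\otimes\on{id}$ sends it to $b_2$. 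Thus $({\mc L},\tau,\eta)$ is exactly the required data, and in particular ${\mc L}^{\otimes 2}\simeq{\mc O}_X$.

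There is no deep obstacle here; the substance of the lemma is precisely that ${\mc L}$ must be allowed to be nontrivial, since $b_2$ need not be the square of a rational function even though $\on{div}(b_2)=2D$ is divisible by $2$ --- the obstruction being the class of ${\mc O}_X(D)\otimes{\mc O}_X(d[\infty])^{-1}$ in $\Jac(X)[2]$, which is nonzero in general (the base field may be finite, and $\Jac(X)[2]\neq 0$ already over $\C$ once $g\geq 1$). The one point I would handle carefully is the bookkeeping of scalars: the isomorphism ${\mc O}_X(2d[\infty])\simeq{\mc O}_X(2D)$ has to be made canonical, so that $b_2=\eta^{\otimes 2}$ holds on the nose and not merely up to $k^{\times}$; equivalently, if one first picks $\eta$ (which is unique up to $k^{\times}$) together with an arbitrary trivialization of ${\mc L}^{\otimes 2}$, one then rescales the \emph{trivialization}, not $\eta$, by the unique constant that makes the equality exact, so no square root of a scalar is ever needed. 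This is also the step where the geometry enters: the $2$-torsion bundle ${\mc L}$ is the same datum as a $\Z_2$-cover of $X$, i.e.\ one of the twisted tori $H$ from \secref{L dual grp} having the $(GL_2,\rho_2)$ property, so the lemma is exhibiting the decomposition of the Hitchin-type base for $G=GL_2$ that underlies \eqref{sum2}.
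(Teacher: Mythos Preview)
Your argument is correct and more direct than the paper's. The paper proceeds geometrically: it forms the spectral curve $C=\{y^2=b_2\}$ in the total space of ${\mc O}_X(d[\infty])$, observes that near each $x_i$ the local equation $y^2=t^{2n_i}$ has two branches, so the normalization $\wt C\to X$ is an \emph{unramified} double cover; it then defines ${\mc L}$ as the line bundle associated to this $\Z_2$-cover and reads off $\eta$ as the anti-invariant section of the pull-back of ${\mc O}_X(d[\infty])$ to $\wt C$ coming from $y$. Your approach bypasses the spectral curve entirely: you write down ${\mc L}={\mc O}_X(D)\otimes{\mc O}_X(d[\infty])^{-1}$ and $\eta=1_D$ explicitly, and manufacture the trivialization ${\mc L}^{\otimes 2}\simeq{\mc O}_X$ from the isomorphism ${\mc O}_X(2D)\simeq{\mc O}_X(2d[\infty])$ determined by $b_2$. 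This is cleaner as a proof of the bare statement and makes the identification of ${\mc L}$ transparent. What the paper's route buys is the explicit link to the spectral curve $\wt X_{b_1,b_2}$ used immediately afterwards to describe the fibers $F_{b_1,b_2}$ of the Hitchin-type map; in your write-up the unramified double cover $\wt C$ is present only implicitly (via ${\mc L}\in\on{Jac}_2$), whereas the paper constructs it as a geometric object from the outset.
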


\begin{proof} Consider the equation $y^2=b_2$, where $y$ is a section
of ${\mc O}_X(d[\infty])$. We interpret the solution as a curve in the
total space of the line bundle ${\mc O}_X(d[\infty])$ over $X$, which
is a ramified double cover $C$ of $X$, with the divisor of
ramification equal to $D$ (indeed, away from the points of $D$ the
value of $b_2$ is non-zero, hence there are two solutions for $y$, but
at the points of $D$ we have $b_2=0$, so there is only one solution
$y=0$). Moreover, near each point $x_i$ in $D$ (see formula \eqref{D})
this curve $C$ is given by the equation $y^2=t^{2n_i}$, and hence has
two branches. Thus, the normalization of $C$ is an {\em unramified}
double cover $\wt{C}$ of $X$. It gives rise to a square root ${\mc L}$
of ${\mc O}_X$.\footnote{Namely, we interpret $\wt{C}$ as a principal
$\Z_2=\{ \pm 1 \}$-bundle over $X$, and take the line bundle
associated to the non-trivial one-dimensional representation of
$\Z_2$.}

Clearly, the pull-back of ${\mc L}$ to $\wt{C}$ is canonically
trivialized. Therefore a section of ${\mc L} \otimes {\mc
O}_X(d[\infty])$ over $X$ is the same thing as a section of the
pull-back of ${\mc O}_X(d[\infty])$ to $\wt{C}$ which is
anti-invariant under the natural involution $\tau: \wt{C} \to
\wt{C}$. But $y$ gives us just such a section (indeed, it assigns to
each point $c$ of $\wt{C}$ a vector in the fiber of ${\mc
O}_X(d[\infty])$ over the image of $c$ in $X$ -- namely, the image of
$c$ in $C$, which is viewed as a point in the total space of ${\mc
O}_X(d[\infty])$; if there are two points $c_1, c_2$ in $\wt{C}$ lying
over the same point of $X$, then these vectors are obviously opposite
to each other). Hence we obtain a section $\eta$ of ${\mc L} \otimes
{\mc O}_X(d[\infty])$ over $X$. It follows that $\eta^2 = b_2$.
\end{proof}

For each square root ${\mc L}$ of the trivial line bundle on $X$,
consider the map
$$
H^0(X,{\mc L} \otimes {\mc O}_X(d[\infty]))^\times \to H^0(X,{\mc
  O}(2d[\infty]))
$$
sending
$$
\eta \mapsto \eta^2.
$$
Denote the image by $B_{\mc L}$. It is isomorphic to the quotient of
$H^0(X,{\mc L} \otimes {\mc O}_X(d[\infty]))^\times$ by the involution
acting as $\eta \mapsto -\eta$.

We conclude that $b_2$ could be any point in
$$
\bigsqcup_{\mc L} B_{\mc L} \subset H^0(X,{\mc O}(2d[\infty])).
$$
Thus, we have described the possible values of $b_2$, which is the
determinant of $\gamma$. This is a subset in a vector space, which has
components labeled by ${\mc L}$, which we view as points of order two
in $\on{Jac}(\Fq)$. We denote the set of such points by
$\on{Jac}_2$ and its subset corresponding to non-trivial ${\mc L}$ by
$\on{Jac}_2^\times$.

\bigskip

What about $b_1$, which is the trace of $\gamma$? It is easy to see
that $b_1$ may take an arbitrary value in $H^0(X,{\mc
O}_X(d[\infty]))$. Thus, we find that the image of ${\mc
H}^{\Delta,0}_{d,(2,0),\infty}$ in
$$
H^0(X,{\mc O}_X(d[\infty])) \oplus H^0(X,{\mc O}_X(2d[\infty]))
$$
under the map
$$
\gamma \mapsto (\on{tr} \gamma,\on{det} \gamma) = (b_1,b_2)
$$
is equal to
\begin{equation}    \label{base}
H^0(X,{\mc O}_X(d[\infty])) \times \left( \bigsqcup_{\mc L \in
  \on{Jac}_2} B_{\mc L} \right).
\end{equation}
In other words, {\em any} $\gamma$ that has trace and determinant of
this form corresponds to a point of ${\mc
  H}^{\Delta,0}_{d,(2,0),\infty}$.

\bigskip

What does the fiber $F_{b_1,b_2}$ look like? Using the theory of
Hitchin fibrations, we describe it as follows: consider the
characteristic polynomial of $\gamma$:
\begin{equation}    \label{wtX}
z^2 - b_1 z + b_2 = 0.
\end{equation}
This equation defines a curve $\wt{X}_{b_1,b_2}$ in the total space of
the line bundle ${\mc O}_X(d[\infty])$ over $X$, which is called the
{\em spectral curve} associated to $b_1, b_2$. If this curve is
smooth, then the fiber $F_{b_1,b_2}$ is just the Jacobian of
$\wt{X}_{b_1,b_2}$. What is the rank two bundle ${\mc M}$
corresponding to a point of the Jacobian of $\wt{X}_{b_1,b_2}$? This
point is a line bundle on $\wt{X}_{b_1,b_2}$. Take its push-forward to
$X$. This is our ${\mc M}$. It comes equipped with a map $s: {\mc M}
\to {\mc M}(d[\infty])$ whose trace and determinant are $b_1$ and
$b_2$, respectively. The claim is that any point in $F_{b_1,b_2}$ may
be obtained this way, as a push-forward of a line bundle on
$\wt{X}_{b_1,b_2}$.

If $\wt{X}_{b_1,b_2}$ is singular, then $F_{b_1,b_2}$ is the
compactification of the Jacobian of $\wt{X}_{b_1,b_2}$ known as the
moduli space of torsion sheaves on $\wt{X}_{b_1,b_2}$ of generic rank
one. It contains the Jacobian of $\wt{X}_{b_1,b_2}$ as an open dense
subset.

\subsection{General case}    \label{conj gen}

In \cite{FN}, Ng\^o and I outlined the geometric properties of the
Hitchin fibration (used in \cite{Ngo:FL} to prove the
fundamental lemma) that can be carried over to our new situation.

Our goal is to understand the cohomology \eqref{restr1}. According to
formula \eqref{verdier}, up to a shift and Tate twist, it is
isomorphic to the dual of the cohomology with compact support of
$\Delta_{\Hc}^*({\mc K}_{d,\rho})$. The following conjecture seems to
be necessary to have a chance to approach this cohomology using known
methods (such as the decomposition theorem and Ng\^o's theorem about
push-forwards of perverse sheaves \cite{Ngo:FL}, which he used in
the proof of the fundamental lemma). It probably tells us something
important about the geometric trace formula and the categorical
Langlands correspondence (the subjects discussed in \secref{leap}),
but it's not clear to me yet what it is.

\begin{conj}[\cite{FN}]    \label{perverse}
The restriction to the diagonal $\Delta_{\Hc}^*({\mc K}_{d,\rho})$ is
a pure perverse sheaf.
\end{conj}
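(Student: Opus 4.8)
The plan is to deduce both assertions --- perversity and purity --- from the Satake origin of $\mc K_{d,\rho}$ together with a Ng\^o-style analysis of the ``group-Hitchin'' fibration $h_{d,\mu}\colon \mc M_{d,\mu}\to\mc A_{d,\mu}$ of \secref{analogue}. First I would reduce the statement. Since $\mc K_{d,\rho}$ is supported on $\Hc_{d,\mu}$, it suffices to prove that $\Delta_{\Hc}^*(\mc K_{d,\rho})$ is pure and perverse on $\mc M_{d,\mu}$; and by \eqref{verdier}, $\mc K_{d,\rho}$ is Verdier self-dual up to a shift and a Tate twist, so this is equivalent to the same assertion for $\Delta_{\Hc}^!(\mc K_{d,\rho})$ --- that is, for the cohomology \eqref{restr1} that we actually wish to understand. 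On the open substack $\mc M_{d,\mu}^{\circ}\subset\mc M_{d,\mu}$ where $D$ is multiplicity-free, $\inv_{x_i}(\varphi)=\mu$ for every $i$, and $\varphi$ is regular semisimple, the stack is smooth, the diagonal $\Delta$ meets the smooth locus of $\Hc_{d,\mu}$ transversally, and $\Delta_{\Hc}^*(\mc K_{d,\rho})$ restricts to a local system placed in a single cohomological degree, hence is pure and perverse there. The content of \conjref{perverse} is thus the propagation of these properties across the boundary $\mc M_{d,\mu}\setminus\mc M_{d,\mu}^{\circ}$.

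For this I would follow the strategy of \cite{Ngo:FL}, adapted so that the Vinberg semigroup of \secref{def stack} plays the role of the characteristic polynomial map. The needed ingredients are: (i) a product formula expressing the fibers of $h_{d,\mu}$ as products of local factors --- affine-Springer-type varieties attached to the germ of $\varphi$ at each point of $D$ --- obtained from the factorization structure of $\Hc_d$ over $X^{(d)}$ and the Beauville--Laszlo theorem already used in \secref{gen hecke}; (ii) codimension estimates, the group-valued analogue of Ng\^o's $\delta$-inequality, bounding stratum by stratum the dimension of the locus in $\mc A_{d,\mu}$ over which $\varphi$ is not regular together with the dimension of the relevant local affine Springer fibers; and (iii) the fact that $\mc K_{d,\rho}$ is universally locally acyclic with respect to the projection of $\Hc_d$ built into its construction in \secref{functor Kdrho}, so that the estimates of (ii) become the statement that $\Delta$ is non-characteristic for $\mc K_{d,\rho}$, whence $\Delta_{\Hc}^*(\mc K_{d,\rho})\in{}^p D^{\leq 0}\cap{}^p D^{\geq 0}$. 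Purity away from $\mc M_{d,\mu}^{\circ}$ would then follow formally from Deligne's theorem and the decomposition theorem: $\mc K_{d,\rho}$ is pure, the morphism $\mathbf p$ of \eqref{Cartesian} is proper over its support, and hence $\Delta_{\Hc}^!\mathbf p_*(\ol{\mc K})$ of \eqref{rhs-new} is pure; combined with the perversity from (iii) this yields the conjecture. Throughout one has to work with the six-operation formalism on algebraic stacks of Laszlo--Olsson \cite{LO}, as in \secref{v sp}, and keep track of the non-quasi-compactness of $\Bun_G$ and of $\mc M_{d,\mu}$.

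The hard part is step (ii). In the Lie-algebra situation of \cite{Ngo:FL} the corresponding codimension estimates rest on the fine geometry of the Hitchin base, on the equidimensionality of the Hitchin fibers over the anisotropic locus, and on Ng\^o's support theorem. The group-valued version is more delicate: the discriminant of the Vinberg semigroup is harder to control than the ordinary characteristic polynomial, the fibers of $h_{d,\mu}$ need not be equidimensional, and the stratification of $\mc M_{d,\mu}$ by the local invariants $\inv_{x_i}(\varphi)$ interacts with the stratifications of the $\ovl\Gr_{n_i\mu}$ in a way that has not been analyzed in the generality required here. Effectively, a complete proof would need a group-valued analogue of Ng\^o's support theorem; that, rather than the formal steps above, is where I expect the real difficulty to lie, and it is why the assertion is stated as a conjecture (compare the discussion in \cite{FN} and in \secref{conj gen}).
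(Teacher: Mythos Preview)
The paper does not prove this statement: it is explicitly labeled a \emph{Conjecture} (attributed to \cite{FN}), and no argument is given. Immediately after stating it, the paper simply records its consequences --- that, granting the conjecture, Deligne's purity theorem forces $(h_{d,\mu}^{\rm st})_{*}\Delta_{\Hc}^*({\mc K}_{d,\rho})$ to be a pure complex, hence geometrically a direct sum of shifted simple perverse sheaves --- and then refers to \cite{FN} for further conjectures on the structure of that pushforward. There is therefore no proof in the paper to compare your proposal against.

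Your write-up is a strategy outline rather than a proof, and you acknowledge as much in the last paragraph: the crux is step (ii), the group-valued codimension/$\delta$-inequality and support-theorem analogue, which is not available. That diagnosis is in line with the paper's own posture. One small caution on the ``formal'' part of your sketch: the purity argument you give is not quite as formal as you suggest. Properness of $\mathbf p$ yields purity of $\ol{\mc K}=\mathbf p_*({\mc K}_{d,\rho})$, but pullback along $\Delta$ (or $\Delta_{\Hc}$) does not in general preserve purity --- $\Delta^*$ preserves only the upper weight bound and $\Delta^!$ only the lower one. To get purity of $\Delta_{\Hc}^*({\mc K}_{d,\rho})$ you would need something stronger than perversity alone, e.g.\ an actual non-characteristic/transversality statement strong enough to make $\Delta_{\Hc}^*$ and $\Delta_{\Hc}^!$ agree up to shift; that is again part of the hard geometric input you flag in (ii)--(iii), not a consequence of the decomposition theorem by itself.
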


Assume that $G$ is semisimple. As in \cite{CL}, there exists a open
substack $\M_{d,\mu}^{\rm st}$ of $\M_{d,\mu}$ that is proper over
$\Ac_{d,\mu}$.  This open substack depends on the choice of a
stability condition. However, its cohomology should be independent of
this choice. Moreover, there exists an open subset $\Ac_{d,\mu}^{\rm
  ani}$ of $\Ac_{d,\mu}$ whose $\bar k$-points are the pairs $(D,b)$
such that as an element of $(T/W)(F\otimes_{k} \bar k)$, $b$
corresponds to a regular semisimple and anisotropic conjugacy class in
$G(F\otimes_{k} \bar k)$. The preimage $\M_{d,\mu}^{\rm ani}$ of
$\Ac_{d,\mu}^{\rm ani}$ is contained in $\M_{d,\mu}^{\rm st}$ for all
stability conditions. In particular, the morphism $\M_{d,\mu}^{\rm
  ani}\to \Ac_{d,\mu}^{\rm ani}$ is proper.
 
To compute the cohomology with compact support of
$\Delta_{\Hc}^*({\mc K}_{d,\rho})$, we consider the sheaf $(h_{d,\mu}^{\rm
  st})_{!}\Delta_{\Hc}^*({\mc K}_{d,\rho}) = (h_{d,\mu}^{\rm
  st})_{*}\Delta_{\Hc}^*({\mc K}_{d,\rho})$ on ${\mc A}_{d,\mu}^{\rm st}$
(recall that $h_{d,\mu}^{\rm st}$ is proper). By Deligne's purity
theorem, \conjref{perverse} implies that $(h_{d,\mu}^{\rm
  st})_{*}\Delta_{\Hc}^*({\mc K}_{d,\rho})$ is a pure complex. Hence,
geometrically, it is isomorphic to a direct sum of shifted simple
perverse sheaves.

In \cite{FN}, Ng\^o and I presented some conjectures describing the
structure of $(h_{d,\mu}^{\rm st})_{*}\Delta_{\Hc}^*({\mc
  K}_{d,\rho})$, both in the general case and in the specific example
of $G=SL_2$ and $H$ a twisted torus.

\section{The geometric trace formula}    \label{leap}

In \secref{v sp} we have interpreted geometrically the right hand side
of the trace formula \eqref{tr}. Now we turn to the left hand
(spectral) side. We will follow closely the exposition of \cite{FN}.
We begin by rewriting the spectral side as a sum over the
homomorphisms $W_F \times \SL_2 \to {}\LG$ in \secref{lhs tr}. It is
tempting to interpret this sum using the Lefschetz fixed point
formula. After explaining the difficulties in doing so directly in
\secref{lfpf}, we try a different approach in \secref{back}. Namely,
we use the categorical Langlands correspondence of \secref{na fm} to
construct a vector space that should be isomorphic to the vector space
of \secref{v sp} that we proposed as the geometrization of the orbital
side of the trace formula. This gives us the sought-after
geometrization of the spectral side, and we declare the conjectural
isomorphism between the two (implied by the categorical Langlands
correspondence) as the ``geometric trace formula''. In \secref{abl
  fixed} we give an heuristic explanation why this vector space should
indeed be viewed as the geometrization of the spectral side.

\subsection{The left hand side of the trace formula}    \label{lhs tr}

We will assume that $G$ is split. As we discussed in \secref{decomp
  trace}, the $L$-packets of (unramified) irreducible automorphic
representations should correspond to (unramified) homomorphisms $W_F
\times \SL_2 \to {}\LG$. Assuming this conjecture and ignoring for the
moment the contribution of the continuous spectrum, we may write the
left hand side of \eqref{general} as
\begin{equation}    \label{trace1}
\on{Tr} {\mb K} = \sum_\sigma m_\sigma N_\sigma,
\end{equation}
where $\sigma$ runs over the unramified homomorphisms $W_F \times \SL_2
\to {}\LG$, $m_\sigma$ is the multiplicity of the irreducible
automorphic representation unramified with respect to $G(\OO_F)$ in the
$L$-packet corresponding to $\sigma$, and $N_\sigma$ is the eigenvalue
of the operator ${\mb K}$ on an unramified automorphic function
$f_\sigma$ on $\Bun_G(\Fq)$ corresponding to a spherical vector in
this representation:
$$
{\mb K} \cdot f_\sigma = N_\sigma f_\sigma.
$$

Thus, recalling \eqref{sum b}, the trace formula \eqref{general}
becomes
\begin{equation}    \label{classical}
\sum_{\sigma: W_F \times \SL_2 \to {}\LG} m_\sigma N_\sigma =
\sum_{P \in \Bun_G(\Fq)} \frac{1}{|\on{Aut}(V)|} \ol{K}(P,P).
\end{equation}

Consider, for example, the case of ${\mb K} = {\mb H}_{\rho,x}$, the
Hecke operator corresponding to a representation $\rho$ of $\LG$ and
$x \in |X|$. Then, according to formula \eqref{matching arthur},
\begin{equation}    \label{Nsigma}
N_\sigma = \on{Tr} \left(\sigma\left(\begin{pmatrix}
        q_x^{1/2} & 0 \\ 0 & q_x^{-1/2} \end{pmatrix} \times
      \on{Fr}_x\right),\rho \right).
\end{equation}
The ${\mb K}$ are generated by the Hecke operators ${\mb
  H}_{\rho,x}$. Therefore the eigenvalue $N_\sigma$ for such an
operator ${\mb K}$ is expressed in terms of the traces of
$\sigma(\on{Fr}_x)$ on representations of $\LG$.

\subsection{Lefschetz fixed point formula
  interpretation}    \label{lfpf}

It is tempting to try to interpret the left hand side of
\eqref{classical} as coming from the Lefschetz trace formula for the
trace of the Frobenius on the cohomology of an $\ell$-adic sheaf on a
moduli stack, whose set of $k$-points is the set of $\sigma$'s.
Unfortunately, such a stack does not exist if $k$ is a finite field
$\Fq$ (or its algebraic closure).  On the other hand, if $X$ is over
$\C$, then there is an algebraic stack $\Loc_{\LG}$ of (de Rham)
$\LG$-local systems on $X$; that is, $\LG$-bundles on $X$ with flat
connection. But in this case there is no Frobenius acting on the
cohomology whose trace would yield the desired number (the left hand
side of \eqref{classical}).  Nevertheless, we will define a certain
vector space (when $X$ is over $\C$), which we will declare to be a
geometrization of the left hand side of \eqref{classical} (we will
give an heuristic explanation for this in \secref{abl fixed}). We will
then conjecture that this space is isomorphic to \eqref{geom orb side}
-- this will be the statement of the ``geometric trace formula'' that
we propose in this paper.

\medskip

Let us first consider the simplest case of the Hecke operator ${\mb
  K}={\mb H}_{\rho,x}$. In this case the eigenvalue $N_\sigma$ is
given by formula \eqref{Nsigma}, which is essentially the trace of the
Frobenius of $x$ on the vector space which is the stalk of the local
system on $X$ corresponding to $\sigma$ and $\rho$. These vector
spaces are fibers of a natural vector bundle on $X \times \Loc_{\LG}$
(when $X$ is defined over $\C$).

Indeed, we have a tautological $\LG$-bundle ${\mc T}$ on $X \times
\on{Loc}_{\LG}$, whose restriction to $X \times \sigma$ is the
$\LG$-bundle on $X$ underlying $\sigma \in \on{Loc}_{\LG}$. For a
representation $\rho$ of $\LG$, let ${\mc T}_\rho$ be the associated
vector bundle on $X \times \on{Loc}_{\LG}$. It then has a partial flat
connection along $X$. Further, for each point $x \in |X|$, we denote
by ${\mc T}_x$ and ${\mc T}_{\rho,x}$ the restrictions of ${\mc T}$
and ${\mc T}_\rho$, respectively, to $x \times \on{Loc}_{\LG}$.

It is tempting to say that the geometrization of the left hand
side of \eqref{classical} in the case ${\mb K}={\mb H}_{\rho,x}$ is
the cohomology
$$
H^\bullet(\Loc_{\LG},{\mc T}_{\rho,x}).
$$
However, this would only make sense if the vector bundle ${\mc
  T}_{\rho,x}$ carried a flat connection (i.e., a ${\mc D}$-module
structure) and this cohomology was understood as the de Rham
cohomology (which is the analogue of the \'etale cohomology of an
$\ell$-adic sheaf that we now wish to imitate when $X$ is defined over
$\C$).

Unfortunately, ${\mc T}_{\rho,x}$ does not carry any natural
connection. Hence we search for another approach. Somewhat
surprisingly, it is provided by the categorical Langlands
correspondence of \secref{cat ver}.

\subsection{Geometrization of the spectral side of the trace
  formula}    \label{back}

The categorical Langlands correspondence \eqref{na fm} should yield
important information not only at the level of objects, but also at
the level of morphisms.  In particular, if we denote the equivalence
going from left to right in the above diagram by $C$, then we obtain
the isomorphisms
\begin{equation}    \label{RHom}
\on{RHom}_{D(\on{Loc}_{\LG})}({\mc F}_1,{\mc F}_2) \simeq 
\on{RHom}_{D(\Bun_G)}(C({\mc F}_1),C({\mc F}_2))
\end{equation}
and
\begin{equation}    \label{RHom1}
\on{RHom}({\mathbb F}_1,{\mathbb F}_2) \simeq 
\on{RHom}(C({\mathbb F}_1),C({\mathbb F}_2)),
\end{equation}
where ${\mathbb F}_1$ and ${\mathbb F}_2$ are arbitrary two functors
acting on the category $D(\on{Loc}_{\LG})$. We will use the latter to
produce the geometric trace formula.

Recall also the Wilson functors $\W_{\rho,x}$ from \secref{cat ver}
and the compatibility \eqref{WH} between the Wilson and Hecke
functors under $C$.

We now construct a functor $\W_{d,\rho}$ on the category of
$\OO$-modules on $\Loc_{\LG}$ from the Wilson functors in the same way
as we build the functor ${\mathbb K}_{d,\rho}$ from the Hecke
functors. Informally speaking, it is the integral over all effective
divisors $D = \sum_i n_i [x_i] \in X^{(d)}$ of $\prod_i
\W_{\rho^{(n_i)},x_i}$. The precise definition is given in \cite{FN},
Section 5.3. We have
$$
\W_{d,\rho}({\mc F}) = {\mc F}_{d,\rho} \otimes p_2^*({\mc F}),
$$
where ${\mc F}_{d,\rho}$ is a certain $\OO$-module on $\Loc_{\LG}$
explicitly defined in terms of the tautological vector bundles ${\mc
  T}_{\rho^{(n)},x}$, and $p_2$ is the projection $X \times \Loc_{\LG}
\to \Loc_{\LG}$.

The functor $\W_\rho$ may also be written as the ``integral
transform'' functor corresponding to the $\OO$-module $\Delta_*({\mc
  F}_{d,\rho})$ on $\on{Loc}_{\LG} \times \on{Loc}_{\LG}$,
where $\Delta$ is the diagonal embedding of $\on{Loc}_{\LG}$:
$$
{\mc F} \mapsto q_*(q'{}^*({\mc F}) \otimes \Delta_*({\mc
  F}_{d,\rho}))),
$$
where $q$ and $q'$ are the projections onto $X \times \on{Loc}_{\LG}$
(the first factor) and $\on{Loc}_{\LG}$ (the second factor),
respectively.

Consider now the isomorphism \eqref{RHom1} in the case ${\mathbb F}_1
= \on{Id}$ and ${\mathbb F}_2 = \W_{d,\rho}$.  It follows from the
construction of the functors $\W_{d,\rho}$ and $\H_{d,\rho}$ and
formula \eqref{WH}. Hence \eqref{RHom1} gives us an isomorphism
$$
\on{RHom}(\on{Id},\W_{d,\rho}) \simeq \on{RHom}(\on{Id},\K_{d,\rho}).
$$

We should also have an isomorphism of the RHoms's of the kernels
defining these functors. On the right hand side this is the RHom
$$
\on{RHom}(\Delta_!(\underline{{\mc C}}),\ol{\mc K}_{d,\rho}),
$$
where $\underline{\mc C}$ is the constant sheaf on $\Bun_G$ and
$\ol{\mc K}_{d,\rho} = {\mathbf p}_*({\mc K}_{d,\rho})$ (see \secref{v
  sp}) in the derived category of ${\mc D}$-modules on $\Bun_G \times
\Bun_G$.

On the left hand side of the categorical Langlands correspondence, the
kernel corresponding to $\W_{d,\rho}$ is just $\Delta_*({\mc
  F}_{d,\rho})$, supported on the diagonal in $\on{Loc}_{\LG} \times
\on{Loc}_{\LG}$.

Thus, we find that the categorical version of the geometric Langlands
correspondence should yield the following isomorphism
\begin{equation}    \label{geometric}
\on{RHom}(\Delta_*(\OO),\Delta_*({\mc F}_{d,\rho})) \simeq
\on{RHom}(\Delta_!(\underline{{\mc C}}),\ol{\mc K}_{d,\rho}).
\end{equation}

\medskip

By adjunction, the right hand side of \eqref{geometric} is isomorphic
to
$$
H^\bullet(\Bun_G,\Delta^! \ol{\mc K}) = H^\bullet(\Bun_G,\Delta^! {\mb
  p}_*({\mc K})) = H^\bullet({\mc M}_d,\Delta_{\Hc}^!({\mc
  K}_{d,\rho}))
$$
(see formula \eqref{adjunction}). The last space is the vector space
\eqref{geom orb side} that we proposed as a geometrization of the
orbital side of the trace formula!

Hence the left hand side of \eqref{geometric} should be the
sought-after geometrization of the spectral side of the trace formula.
Let us rewrite it using adjunction as follows:
$$
\on{RHom}(\Delta^* \Delta_*(\OO),{\mc F}_{d,\rho}).
$$
Then we obtain the following isomorphism, which we conjecture as a
{\em geometric trace formula}.

\begin{conj}    \label{gtf conj}
There is an isomorphism of vector spaces:
\begin{equation}    \label{more general}
\on{RHom}(\Delta^* \Delta_*(\OO),{\mc F}_{d,\rho}) \simeq
H^\bullet(\Bun_G,\Delta^!(\ol{\mc K}_{d,\rho})).
\end{equation}
\end{conj}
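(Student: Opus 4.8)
The plan is to derive \eqref{more general} from the conjectural categorical Langlands equivalence \eqref{na fm}; thus what follows is really a reduction of \conjref{gtf conj} to \eqref{na fm} together with the expected compatibility of the equivalence $C$ (from left to right in \eqref{na fm}) with the Wilson and Hecke functors. The first step is to make precise the spectral-side functor $\W_{d,\rho}$ on the category of $\OO$-modules on $\Loc_{\LG}$ and to identify its kernel. Just as $\K_{d,\rho}({\mc F}) = p_{d!}(p'_d{}^{*}({\mc F}) \otimes {\mc K}_{d,\rho})$ is assembled from the elementary Hecke functors $\H_{\rho^{(n)},x}$ by integrating $\prod_i \H_{\rho^{(n_i)},x_i}$ over the divisors $D = \sum_i n_i[x_i] \in X^{(d)}$, packaged by the sheaf ${\mc K}_{d,\rho}$ on ${\mc H}_d$ of \secref{functor Kdrho}, the functor $\W_{d,\rho}$ is built from the $\W_{\rho^{(n)},x}$ by the same symmetrization over $X^{(d)}$ (carried out in \cite{FN}, Section~5.3); this produces the explicit $\OO$-module ${\mc F}_{d,\rho}$ with $\W_{d,\rho}({\mc F}) \simeq {\mc F}_{d,\rho} \otimes p_2^{*}({\mc F})$, whose kernel on $\Loc_{\LG} \times \Loc_{\LG}$ is $\Delta_{*}({\mc F}_{d,\rho})$, while the kernel of $\K_{d,\rho}$ is $\ol{\mc K}_{d,\rho} = {\mb p}_{*}({\mc K}_{d,\rho})$. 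To pass between functors and their kernels one needs the functor--kernel correspondence for these stacks (endofunctors of the category of $\OO$-modules on $\Loc_{\LG}$ being identified with $\OO$-modules on $\Loc_{\LG} \times \Loc_{\LG}$, and likewise for ${\mc D}$-modules on $\Bun_G$), which holds because the categories in question are dualizable and self-dual; a continuous equivalence $C$ on one factor then induces an equivalence $C \otimes C$ on kernels, intertwining composition of kernels with composition of functors.

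Second, I would record the two $C$-compatibilities that the argument needs. On the one hand $C(\on{Id}) \simeq \on{Id}$: the identity functor is the unit for composition of kernels on each side, with kernel $\Delta_{*}(\OO)$ on $\Loc_{\LG} \times \Loc_{\LG}$ and $\Delta_{!}(\underline{\mc C})$ on $\Bun_G \times \Bun_G$, and an equivalence of monoidal categories carries unit to unit. On the other hand $C(\W_{d,\rho}) \simeq \K_{d,\rho}$: this follows from the compatibility \eqref{WH}, $C(\W_V) \simeq \H_V$, for the elementary Wilson/Hecke pairs, provided one knows that $C$ intertwines the symmetrization-over-$X^{(d)}$ construction --- that is, is compatible with exterior products of functors, with pushforward along $\pi^d: X^d \to X^{(d)}$, and with passage to $S_d$-invariants, uniformly in the moving points --- equivalently, with the factorization structure underlying ${\mc K}_{d,\rho}$ and ${\mc F}_{d,\rho}$.

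Third, apply the morphism-level isomorphism \eqref{RHom1} to ${\mathbb F}_1 = \on{Id}$ and ${\mathbb F}_2 = \W_{d,\rho}$, obtaining $\on{RHom}(\on{Id},\W_{d,\rho}) \simeq \on{RHom}(\on{Id},\K_{d,\rho})$, and transport this to the corresponding kernels via the functor--kernel correspondence of the first step, which gives \eqref{geometric}:
$$\on{RHom}\bigl(\Delta_{*}(\OO),\Delta_{*}({\mc F}_{d,\rho})\bigr) \simeq \on{RHom}\bigl(\Delta_{!}(\underline{\mc C}),\ol{\mc K}_{d,\rho}\bigr).$$
Finally, by $(\Delta^{*},\Delta_{*})$-adjunction the left side equals $\on{RHom}(\Delta^{*}\Delta_{*}(\OO),{\mc F}_{d,\rho})$, and by $(\Delta_{!},\Delta^{!})$-adjunction together with $\on{RHom}(\underline{\mc C},-) = H^\bullet(\Bun_G,-)$ and \eqref{adjunction}, the right side equals $H^\bullet(\Bun_G,\Delta^{!}\ol{\mc K}_{d,\rho}) = H^\bullet({\mc M}_d,\Delta_{\Hc}^{!}({\mc K}_{d,\rho}))$. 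Comparing the two expressions yields \eqref{more general}.

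The main obstacle is that the non-abelian equivalence \eqref{na fm} is itself only conjectural, and even granting it the derivation uses far more than an abstract equivalence of derived categories: one needs $C$ in the strong form compatible with the Wilson/Hecke functors \eqref{WH} and, crucially, with their $X^{(d)}$-integrated versions --- a factorization/monoidal compatibility --- together with dualizability of the relevant categories of $\OO$-modules and ${\mc D}$-modules so that the equivalence descends to kernels (bimodule categories). A further subtlety on the spectral side is that $\Loc_{\LG}$ is a derived/singular stack, so $\Delta^{*}\Delta_{*}(\OO)$ is not $\OO$ but a nontrivial complex built from the shifted tangent complex of $\Loc_{\LG}$ (a Hochschild-type object), and matching it against $\Delta^{!}\ol{\mc K}_{d,\rho}$ --- including getting the cohomological shifts, Tate twists, and the trivialization of the $\Z_2$-gerbe of footnote~\ref{foot} right --- is precisely where the remaining work lies. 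In the abelian case $G = GL_1$ all of this can be made rigorous via the Fourier--Mukai description of \cite{Laumon,Rothstein}, which provides a sanity check; in general \conjref{gtf conj} rests on \eqref{na fm} and remains conjectural.
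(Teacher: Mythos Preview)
Your proposal is correct and follows essentially the same route as the paper: reduce \conjref{gtf conj} to the conjectural categorical equivalence \eqref{na fm} together with the Wilson/Hecke compatibility \eqref{WH}, apply \eqref{RHom1} with ${\mathbb F}_1=\on{Id}$ and ${\mathbb F}_2=\W_{d,\rho}$, pass to kernels to obtain \eqref{geometric}, and then use the $(\Delta^*,\Delta_*)$ and $(\Delta_!,\Delta^!)$ adjunctions on the two sides. Your discussion of the obstacles (the conjectural status of \eqref{na fm}, the need for factorization-level compatibility and dualizability, and the derived nature of $\Loc_{\LG}$) is also in line with the caveats the paper itself records.
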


\medskip

Thus, starting with the categorical Langlands correspondence, we have
arrived at what we propose as a geometrization of the trace formula.

We have a similar conjecture for more general functors $\K$ which are
compositions of $\K_{d,\rho}$ and Hecke functors at finitely many
points of $X$ (see \cite{FN}).

\subsection{Connection to the Atiyah--Bott-Lefschetz fixed point
  formula}    \label{abl fixed}

In \cite{FN} we gave the following heuristic explanation why we should
think of the space \eqref{more general} as a geometrization of
the sum appearing on the left hand side of \eqref{classical}.

Let ${\mc F}$ be a coherent sheaf on $\on{Loc}_{\LG}$ built from the
vector bundles ${\mc T}_{\rho,x}, x \in |X|$ (like ${\mc F}_{d,\rho}$
constructed below). We would like to interpret taking the trace of the
Frobenius on the (coherent!) cohomology $H^\bullet(\on{Loc}_{\LG},{\mc
  F})$ as the sum over points $\sigma$ of $\on{Loc}_{\LG}$, which we
think of as the fixed points of the Frobenius automorphism acting on a
moduli of homomorphisms
$$
\pi_1(X \underset{k}\otimes \ol{k}) \to {}\LG.
$$

Recall the Atiyah--Bott--Lefshetz fixed point formula\footnote{It is a
  great pleasure to do this here, because all of them were AMS
  Colloquium Lecturers in the past.} \cite{AB} (see also
\cite{Illusie}, \S 6). Let $M$ be a smooth proper scheme, $V$ a vector
bundle on $M$, and $\V$ the coherent sheaf of sections of $V$. Let $u$
be an automorphism acting on $M$ with isolated fixed points, and
suppose that we have an isomorphism $\gamma: u^*(\V) \simeq \V$. Then
\begin{equation}    \label{l factor}
\on{Tr}(\gamma,H^\bullet(M,\V)) = \sum_{p \in M^u}
\frac{\on{Tr}(\gamma,\V_p)}{\on{det}(1-\gamma,T^*_p M)},
\end{equation}
where
$$
M^u = \Gamma_u \underset{M \times M}\times \Delta
$$
is the set of fixed points of $u$, the fiber product of the graph
$\Gamma_u$ of $u$ and the diagonal $\Delta$ in $M \times M$, which we
assume to be transversal to each other (as always, the left hand side
of \eqref{l factor} stands for the {\em alternating} sum of traces on
the cohomologies).

Now, if we take the cohomology not of $\V$, but of the tensor
product $\V \otimes \Omega^\bullet(M)$, where $\Omega^\bullet(M)
= \Lambda^\bullet(T^*(M))$ is the graded space of differential forms,
then the determinants in the denominators on the right hand side of
formula \eqref{l factor} will get canceled and we will obtain the
following formula:
\begin{equation}    \label{l factor1}
\on{Tr}(\gamma,H^\bullet(M,\V \otimes \Omega^\bullet(M))) = \sum_{p
  \in M^u} \on{Tr}(\gamma,\V_p).
\end{equation}

We would like to apply formula \eqref{l factor1} in our
situation. However, $\on{Loc}_{\LG}$ is not a scheme, but an
algebraic stack (unless $\LG$ is a torus), so we need an analogue of
\eqref{l factor1} for algebraic stacks (and more generally, for
derived algebraic stacks, since $\on{Loc}_{\LG}$ is not smooth as an
ordinary stack).

Let $M$ be as above and $\Delta: M \hookrightarrow M^2$ the diagonal
embedding. Observe that
\begin{equation}    \label{diagonal}
\Delta^* \Delta_*(\V) \simeq \V \otimes \Omega^\bullet(M),
\end{equation}
and hence we can rewrite \eqref{l factor1} as follows:
\begin{equation}    \label{l factor2}
\on{Tr}(\gamma,H^\bullet(M,\Delta^* \Delta_*(\V))) = \sum_{p \in M^u}
\on{Tr}(\gamma,\V_p).
\end{equation}

Now we propose formula \eqref{l factor2} as a {\em conjectural
  generalization of the Atiyah--Bott--Lefschetz fixed point formula}
to the case that $M$ is a smooth algebraic stack, and more generally,
smooth derived algebraic stack (provided that both sides are
well-defined). We may also allow here $\V$ to be a perfect complex (as
in \cite{Illusie}).

Even more generally, we drop the assumption that $u$ has fixed points
(or that the graph $\Gamma_u$ of $u$ and the diagonal $\Delta$ are
transversal) and conjecture the following general fixed point formula
of Atiyah--Bott type for (derived) algebraic stacks.

\begin{conj}    \label{conj abl formula}
\begin{equation}    \label{l factor3}
\on{Tr}(\gamma,H^\bullet(M,\Delta^* \Delta_*(\V))) =
\on{Tr}(\gamma,H^\bullet(M^u,i_u^*(\V))),
\end{equation}
where $M^u = \Gamma_u \underset{M \times M}\times \Delta$
is the fixed locus of $u$ and $i_u: M^u \to M \times M$.
\end{conj}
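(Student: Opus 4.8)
Since \conjref{conj abl formula} is stated as a conjecture, what follows is a strategy rather than a proof. The plan is to recognize both sides of \eqref{l factor3} as two models for one and the same \emph{categorical trace} of an endofunctor of $\on{QCoh}(M)$, and to deduce their equality from the base-change (``locality'') properties of such traces, reducing the transversal case to the classical formula \eqref{l factor}. Concretely, regard the perfect complex $\V$ as the kernel $\Delta_*\V\in\on{QCoh}(M\times M)$ of the endofunctor $\Phi_\V = (-)\otimes\V$ of $\on{QCoh}(M)$, and regard $u$ as the autoequivalence $u^*$, with kernel the pushforward $\Gamma_{u,*}\Oc_M$ of the structure sheaf along the graph embedding $\Gamma_u\hookrightarrow M\times M$. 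The isomorphism $\gamma\colon u^*\V\simeq\V$ is precisely an identification of the kernel of $u^*\circ\Phi_\V$ with $\Gamma_{u,*}\V$. Both sides of \eqref{l factor3} then compute the trace $\on{Tr}(u^*\circ\Phi_\V)$: the left side by transporting the kernel $\Gamma_{u,*}\V$ back to the diagonal via $\gamma$ and taking $H^\bullet(M,\Delta^*(-))$, the right side by restricting $\Delta_*\V$ to the derived intersection $M^u = \Gamma_u\times_{M\times M}\Delta(M)$ of the diagonal and the graph.

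On the left, when $M$ is smooth the self-intersection formula gives $\Delta^*\Delta_*\Oc_M\simeq\Oc_{\mc L M}$, where $\mc L M = M\times_{M\times M}M$ is the derived loop stack; in characteristic zero this is the Hochschild--Kostant--Rosenberg identification $\Delta^*\Delta_*\Oc_M\simeq\bigoplus_i\Omega^i_M[i]$ already used in \eqref{diagonal}. Thus $H^\bullet(M,\Delta^*\Delta_*\V)\simeq H^\bullet(\mc L M,\pi^*\V)$, on which $\gamma$ together with the action of $u$ on $M$ induces an endomorphism; this is the $u$-twisted Hochschild homology of $M$ with coefficients in $\Phi_\V$. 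On the right, derived base change identifies $i_u^*\Delta_*\V$, a complex on $M^u$, with $g^*\V$ (where $g\colon M^u\to M$ is the natural projection) twisted by the Koszul complex of the generally non-transversal intersection $\Delta(M)\cap\Gamma_u$ — here it is essential to keep the derived structure on $M^u$, since this is exactly what replaces the transversality hypothesis. The two expressions then agree because the categorical trace of an endomorphism of a dualizable object does not depend on the geometric model chosen to compute Hochschild homology: the $u$-twisted loop stack of $M$ \emph{is} $M^u$ once $\gamma$ is used to replace the second copy of the diagonal by the graph.

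To turn this into an argument one would: (i) establish dualizability of $\on{QCoh}(M)$ for $M$ in a suitable class of stacks (perfect, or QCA, in the sense of the work of Ben-Zvi, Francis and Nadler on integral transforms), so that traces and their locality make sense; (ii) promote the self-intersection and base-change identities above to the derived-stacky setting; and (iii) verify that on the open locus where $\Gamma_u$ and $\Delta(M)$ meet transversally in an actual scheme, \eqref{l factor3} reduces to \eqref{l factor1}, hence via \eqref{diagonal} to the classical \eqref{l factor}. A complementary route is by smooth descent: present $M$ locally as a quotient $[Y/K]$, reduce to the $K$-equivariant scheme case, and invoke equivariant localization together with \eqref{l factor}.

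\textbf{The main difficulty} is not formal but one of finiteness and convergence. For a general Artin stack the graded vector spaces in \eqref{l factor3} need not be bounded or finite-dimensional — already $M = B\Gm$ exhibits this — so ``$\on{Tr}(\gamma,-)$'' must be interpreted with care, and one has to single out a class of stacks and of automorphisms $u$ for which both traces are defined and the identity of graded spaces upgrades to an identity of traces. In the case of interest, $M = \on{Loc}_{\LG}$, there is the further serious point that $\on{Loc}_{\LG}$ is only quasi-smooth, so the HKR identity fails as stated; one must work with $\on{IndCoh}$ and nilpotent singular-support conditions, and the fixed-point formula is then expected to hold only after imposing such conditions, possibly with a correction term accounting for the singularities. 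Finally, when $u$ is the Frobenius the locus $M^u$ must be the \emph{derived} Frobenius-fixed points, and matching this with the loop-space picture valid over $\C$ is exactly the phenomenon that \conjref{conj abl formula} is meant to make precise — so it is in the comparison step (iii) above that the real content lies.
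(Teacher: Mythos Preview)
The paper does not prove this statement: it is stated as a conjecture, and the only remark the paper makes is the single sentence ``It is possible that \conjref{conj abl formula} may be proved using the methods of \cite{BFN,BN}.'' Your proposal is precisely a fleshed-out version of that one-line hint --- interpreting both sides as models for a categorical trace in the framework of Ben-Zvi--Francis--Nadler and Ben-Zvi--Nadler, and invoking base change for integral kernels --- so it is entirely in line with what the paper suggests, only far more detailed. There is nothing further in the paper to compare against.
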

It is possible that \conjref{conj abl formula} may be proved using the
methods of \cite{BFN,BN}.\footnote{After these notes were posted on
  the arXiv, we were informed by A. Polishchuk that our conjecture
  could be proved using the methods of \cite{P}. We thank Polishchuk
  for a useful discussion, which helped us to correct an inaccuracy in
  formula \eqref{l factor3}.}

We want to apply \eqref{l factor2} to the left hand side of
\eqref{more general}, which is
$$
\on{RHom}(\Delta^* \Delta_*(\OO),{\mc F}),
$$
where ${\mc F} = {\mc F}_{d,\rho}$. This is not exactly in the form of
the left hand side of \eqref{l factor2}, but it is very close. Indeed,
if $M$ is a smooth scheme, then
$$
\on{RHom}(\Delta^* \Delta_*(\OO),{\mc F}) \simeq {\mc F} \otimes
\Lambda^\bullet(TM),
$$
so we obtain the exterior algebra of the tangent bundle instead of the
exterior algebra of the cotangent bundle. In our setting, we want
$\gamma$ to be the Frobenius, and so a fixed point is a homomorphism
$\pi_1(X) \to {}\LG$. The tangent space to $\sigma$ (in
the derived sense) should then be identified with the cohomology
$H^\bullet(X,\on{ad} \circ \, \sigma)[1]$, and the cotangent space with
its dual. Hence the trace of the Frobenius on the exterior algebra of
the tangent space at $\sigma$ is the $L$-function
$L(\sigma,\on{ad},s)$ evaluated at $s=0$. Poincar\'e duality implies
that the trace of the Frobenius on the exterior algebra of the
cotangent space at $\sigma$ is
$$
L(\sigma,\on{ad},1) = q^{-d_G} L(\sigma,\on{ad},0),
$$
so the ratio between the traces on the exterior algebras of the
tangent and cotangent bundles at the fixed points results in an
overall factor which is a power of $q$ (which is due to our choice of
conventions).

Hence, by following this argument and switching from $\C$ to $\Fq$, we
obtain (up to a power of $q$) the trace formula \eqref{classical} from
the isomorphism \eqref{more general}.

The isomorphism \eqref{more general} is still tentative, because there
are some unresolved issues in the definition of the two sides (see
\cite{FN} for more details).  Nevertheless, we hope that further study
of \eqref{more general} will help us to gain useful insights into the
trace formula and functoriality. I refer the reader to \cite{FN},
where in particular the abelian example is worked out and possible
applications are discussed in the general case.

\section{Relative geometric trace formula}    \label{rel trace}

In the previous section we discussed a geometrization of the trace
formula \eqref{tr}. It appears in the framework of the categorical
Langlands correspondence as the statement that the RHom's of kernels
of certain natural functors are isomorphic. These kernels are sheaves
on algebraic stacks over the squares $\Bun_G \times \Bun_G$ and
$\on{Loc}_{\LG} \times \on{Loc}_{\LG}$.

It is natural to ask what kind of statement we may obtain if we
consider instead the RHom's of sheaves on the stacks $\Bun_G$ and
$\on{Loc}_{\LG}$ themselves.

In this section we will show, following closely \cite{FN}, that this
way we obtain what may be viewed as a geometric analogue of the
so-called {\em relative trace formula}. On the spectral side of this
formula we also have a sum like \eqref{trace1}, but with one important
modification; namely, the eigenvalues $N_{\sigma}$ are weighted with
the factor $L(\sigma,\on{ad},1)^{-1}$.  The insertion of this factor
was originally suggested by Sarnak in \cite{Sarnak} and further
studied by Venkatesh \cite{Ven}, for the group $GL_2$ in the number
field context. The advantage of this formula is that the summation is
expected to be only over tempered representations and we remove the
multiplicity factors $m_\sigma$.

\subsection{Relative trace formula}

We recall the setup of the relative trace formula.

Let $G$ be a split simple algebraic group over $k=\Fq$.
In order to state the relative trace formula, we need
to choose a non-degenerate character of $N(F)\bs N(\AD_F)$, where $N$ is
a maximal unipotent subgroup of $G$. A convenient way to define it is
to consider a twist of the group $G$. Let us pick a maximal torus $T$
such that $B=TN$ is a Borel subgroup. If the maximal torus $T$ admits
the cocharacter $\crho: \Gm \to T$ equal to half-sum of all positive
roots (corresponding to $B$), then let $K_X^{\crho}$ be the $T$-bundle
on our curve $X$ which is the pushout of the $\Gm$-bundle $K_X^\times$
(the canonical line bundle on $X$ without the zero section) under
$\crho$. If $\crho$ is not a cocharacter of $T$, then its square is,
and hence this $T$-bundle is well-defined for each choice of the
square root $K_X^{1/2}$ of $K_X$. We will make that choice once and for
all.\footnote{This choice is related to the ambiguity of the
equivalence \eqref{na fm}, see the footnote on page \pageref{foot}.} 

Now set
$$
G^K = K_X^{\crho} \underset{T}\times G, \qquad N^K =
K_X^{\crho} \underset{T}\times N,
$$
where $T$ acts via the adjoint action. For instance, if $G=GL_n$, then
$GL_n^K$ is the group scheme of automorphism of the rank $n$
bundle ${\mc O} \oplus K_X \oplus \ldots \oplus
K_X^{\otimes(n-1)}$ on $X$ (rather than the trivial bundle ${\mc
O}^{\oplus n}$).

We have
\begin{equation}    \label{comm}
N^K/[N^K,N^K] = K_X^{\bigoplus \on{rank}(G)}.
\end{equation}
Now let $\psi: \Fq \to \C^\times$ be an additive
character, and define a character $\Psi$ of $N^K(\AD_F)$ as follows
$$
\Psi((u_x)_{x \in |X|}) = \prod_{x \in |X|} \prod_{i=1}^{\on{rank} G}
\psi(\on{Tr}_{k_x/k} \on{Res}_x(u_{x,i})),
$$
where $u_{x,i} \in K_X(F_x)$ is the $i$th projection of $u_x \in
N^K(F_x)$ onto $K_X(F_x)$ via the isomorphism \eqref{comm}. We
denote by $k_x$ the residue field of $x$, which is a finite extension
of the ground field $k=\Fq$.

By the residue formula, $\Psi$ is trivial on the subgroup $N^K(F)$
(this was the reason why we introduced the twist). It is also trivial
on $N^K({\mc O})$.

In what follows, in order to simplify notation, we will denote
$G^K$ and $N^K$ simply by $G$ and $N$.

Given an automorphic representation $\pi$ of $G(\AD_F)$, we have the
Whittaker functional $W: \pi \to \C$,
$$
W(f) = \int_{N(F)\bs N(\AD_F)} f(u) \Psi^{-1}(u) du,
$$
where $du$ is the Haar measure on $N(\AD_F)$ normalized so that the
volume of $N(\OO_F)$ is equal to $1$.

We choose, for each unramified automorphic representation $\pi$, a
non-zero $G(\OO_F)$-invariant function $f_\pi \in \pi$ on $G(F)\bs
G(\AD_F)$.

Let $K$ again be a kernel on the square of $\Bun_G(k) =
G(F) \bs G(\AD_F)/G(\OO_F)$ and ${\mb K}$ the
corresponding integral operator acting on unramified automorphic
functions. The simplest unramified version of the relative trace
formula reads (here we restrict the summation to cuspidal automorphic
representations $\pi$)
\begin{multline}    \label{simplest}
\sum_{\pi} \ol{W_{\Psi}(f_{\pi})} \; W_\Psi(K \cdot f_\pi) ||f_{\pi}||^{-2} = \\
\underset{N(F) \bs N(\AD_F)}\int \quad \underset{N(F) \bs N(\AD_F)}\int
K(u_1,u_2) \Psi^{-1}(u_1) \Psi(u_2) du_1 du_2,
\end{multline}
(see, e.g., \cite{Jacquet}), where
$$
||f||^2 = \underset{G(F)\bs G(\AD_F)}\int |f(g)|^2 dg,
$$
and $dg$ denotes the invariant Haar measure normalized so that the
volume of $G(\OO_F)$ is equal to $1$. Note that
$$
||f||^2 = q^{d_G} L(G) ||f||^2_T,
$$
where $||f||^2_T$ is the norm corresponding to the Tamagawa measure,
$d_G = (g-1) \dim G$,
$$
L(G) = \prod_{i=1}^\ell \zeta(m_i+1),
$$
where the $m_i$ are the exponents of $G$.

The following conjecture was communicated to us by B. Gross and
A. Ichino. In the case of $G=SL_n$ or $PGL_n$, formula \eqref{rs1}
follows from the Rankin--Selberg convolution formulas (see
\cite{FN}). Other cases have been considered in
\cite{GP,Ich1,Ich2}. For a general semi-simple group $G$ of adjoint
type formula \eqref{rs1} has been conjectured by A. Ichino and
T. Ikeda assuming that $\pi$ is square-integrable. Note that if a
square-integrable representation is tempered, then it is expected to
be cuspidal, and that is why formula \eqref{rs1} is stated only for
cuspidal representations.

Recall that an $L$-packet of automorphic representations is called
{\em generic} if each irreducible representation $\pi = \bigotimes'
\pi_x$ from this $L$-packet has the property that the local $L$-packet
of $\pi_x$ contains a generic representation (with respect to a
particular choice of non-degenerate character of $N(F_x)$).

\begin{conj}    \label{gen}
Suppose that the $L$-packet corresponding to an unramified $\sigma:
W_F \to {}\LG$ is generic. Then it contains a unique, up to an
isomorphism, irreducible representation $\pi$ such that $W_\Psi(f_\pi)
\neq 0$, with multiplicity $m_\pi = 1$. Moreover, if this $\pi$ is
in addition cuspidal, then the following formula holds:
\begin{equation}    \label{rs1}
|W_\Psi(f_\pi)|^2 ||f_\pi||^{-2} = q^{d_N-d_G} L(\sigma,\on{ad},1)^{-1}
|S_\sigma|^{-1},
\end{equation}
where $S_\sigma$ is the (finite) centralizer of the image of $\sigma$
in $\LG$, $d_N = -(g-1)(4 \langle \rho,\crho \rangle - \dim N)$ (see
formula \eqref{dim BunN}), and $d_G = (g-1) \dim G$.
\end{conj}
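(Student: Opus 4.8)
The plan is to split \conjref{gen} into two parts that can be attacked independently: (a) the purely representation-theoretic assertion that a generic $L$-packet contains exactly one $\Psi$-generic irreducible representation $\pi$, occurring with multiplicity $m_\pi=1$; and (b) the numerical period identity \eqref{rs1} relating the normalized Whittaker period $|W_\Psi(f_\pi)|^2\,\|f_\pi\|^{-2}$ to $q^{d_N-d_G}L(\sigma,\on{ad},1)^{-1}|S_\sigma|^{-1}$. Part (a) is essentially known input; part (b) is the heart of the matter.

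\textbf{Part (a).} Locally, uniqueness of Whittaker models for quasi-split groups over the nonarchimedean fields $F_x$ is known (Gelfand--Kazhdan for $GL_n$, and in general via the theory of the Whittaker functional), so $\dim\on{Hom}_{N(F_x)}(\pi_x,\Psi_x)\le 1$. Combined with the fact that a generic local $L$-packet contains a unique $\Psi_x$-generic constituent, which at an unramified place is the unramified member, the product structure $\pi=\bigotimes'_x\pi_x$ forces a unique global $\pi$ with $W_\Psi(f_\pi)\ne 0$. That $m_\pi=1$ for this $\pi$ then follows from the multiplicity formula of the endoscopic classification (Arthur, for quasi-split classical groups) together with the triviality of the character of $S_\sigma$ attached to the generic member; for $G=GL_n$ it is just the Jacquet--Shalika rigidity theorem.

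\textbf{Part (b).} Here I would use Rankin--Selberg unfolding twice. On one hand, for generic cuspidal $\pi$ the Whittaker--Fourier expansion
\[
f_\pi(g)=\sum_{\gamma\in N(F)\bs P(F)} W_{f_\pi}(\gamma g),
\]
with $P$ the appropriate mirabolic (or derived parabolic), reconstructs $f_\pi$ from its Whittaker function $W_{f_\pi}$. On the other hand, unfolding $\|f_\pi\|^2=\int_{G(F)\bs G(\AD_F)}|f_\pi(g)|^2\,dg$ against the degenerate Eisenstein series on the Borel (or, for $GL_n$, the Jacquet--Shalika integral) converts it into an Euler product of local integrals $\int_{N(F_x)}|W^0_{\pi_x}(n)|^2\,dn$. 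Since $X$ is over $\Fq$ and both $\sigma$ and $\pi$ are unramified everywhere, \emph{every} local integral is the Whittaker integral of the spherical vector, and the Casselman--Shalika formula evaluates it to $L(1,\pi_x,\on{ad})^{-1}$ times an explicit local volume constant. Taking the product over $x\in|X|$ produces $L(\sigma,\on{ad},1)^{-1}$; the assembled local constants, the residue of the Eisenstein series, and the comparison of the chosen Haar measure (normalized by $\on{vol}(G(\OO_F))=\on{vol}(N(\OO_F))=1$) with the Tamagawa measure account for the factor $|S_\sigma|^{-1}$, while the powers of $q$ emerge as the difference of dimensions of the twisted bundle stacks attached to $N^K=K_X^{\crho}\underset{T}\times N$ and to $G$, i.e.\ a Riemann--Roch count on the genus-$g$ curve for the line bundles built from $K_X^{\crho}$ --- which is precisely the shape $d_N=-(g-1)(4\langle\rho,\crho\rangle-\dim N)$, $d_G=(g-1)\dim G$ recorded in the statement.

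\textbf{The main obstacle.} The real difficulty is making part (b) work for a \emph{general} split simple $G$ rather than $GL_n$ (where Jacquet--Shalika suffices) or $SL_n,PGL_n$: for general $G$ there is no known integral representation of $L(s,\pi,\on{ad})$, so the double-unfolding computation above is unavailable, and one is forced into the Ichino--Ikeda framework (and, for the Whittaker period specifically, the Lapid--Mao approach for quasi-split classical groups). One would prove \eqref{rs1} up to a constant by comparing the relative (Kuznetsov/Whittaker) trace formula with the stabilized Arthur--Selberg trace formula, and then pin the constant down using multiplicativity under parabolic induction and a base case. The delicate points are the precise local Whittaker integrals --- trivialized in our situation only because everything is unramified --- and, globally, the appearance of $|S_\sigma|^{-1}$, which encodes the multiplicity behaviour inside the $L$-packet and is therefore only as accessible as the multiplicity formula for $G$ itself. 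This is why, even within the geometrization program, \conjref{gen} can at present be regarded as established for $GL_n$ and groups reducible to it, and remains conjectural in general.
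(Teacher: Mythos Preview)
The statement you are addressing is labeled a \emph{Conjecture} in the paper, and the paper does not attempt to prove it. Immediately before stating it, the authors write that it was communicated to them by B.~Gross and A.~Ichino, that for $G=SL_n$ or $PGL_n$ formula \eqref{rs1} follows from Rankin--Selberg convolution formulas, that other cases have been considered in the Gross--Prasad and Ichino--Ikeda papers, and that for a general semi-simple $G$ of adjoint type the formula is precisely the Ichino--Ikeda conjecture for square-integrable $\pi$. So there is no ``paper's own proof'' to compare your proposal against.

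That said, your assessment of the situation is accurate and matches the paper's discussion: part (a) is standard input from uniqueness of Whittaker models and the endoscopic multiplicity formula; part (b) is accessible by Rankin--Selberg/Jacquet--Shalika unfolding for $GL_n$ and its close relatives, and for general split simple $G$ collapses exactly onto the Ichino--Ikeda/Lapid--Mao framework, where it remains open. Your identification of the main obstacle --- the absence of an integral representation for $L(s,\pi,\on{ad})$ beyond type $A$ --- is the correct diagnosis, and your remark that the factor $|S_\sigma|^{-1}$ is only as accessible as the multiplicity formula itself is well taken. In short, you have not produced a proof (nor could you, given the current state of the subject), but you have correctly located the conjecture within the existing literature, which is all the paper itself does.
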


Furthermore, we expect that if the $L$-packet corresponding to
$\sigma: W_F \times \SL_2 \to {}\LG$ is non-generic, then
\begin{equation}    \label{non-gen}
L(\sigma,\on{ad},1)^{-1} = 0.
\end{equation}
Ichino has explained to us that according to Arthur's conjectures,
square-integrable non-tempered representations are non-generic.

Recall that we have ${\mb K} \cdot f_\pi = N_\sigma f_\pi$. Therefore
\conjref{gen} and formula \eqref{simplest} give us the following:
\begin{multline}    \label{relative}
q^{-d_G} \sum_{\sigma: W_F \to {}\LG} N_\sigma \cdot
L(\sigma,\on{ad},1)^{-1} |S_\sigma|^{-1} =
\\ q^{-d_N} \int \int K(u_1,u_2) \Psi^{-1}(u_1) \Psi(u_2) du_1 du_2.
\end{multline}
On the left hand side we sum only over unramified $\sigma$, and only
those of them contribute for which the corresponding $L$-packet of
automorphic representations $\pi$ is generic.

We expect that the left hand side of formula \eqref{relative} has the
following properties:

\begin{itemize}

\item[(1)] It does not include homomorphisms $\sigma: \SL_2 \times W_F
  \to \LG$ which are non-trivial on the Artur's $\SL_2$.

\item[(2)] The multiplicity factor $m_\sigma$ of formula
  \eqref{classical} disappears, because only one irreducible
  representation from the $L$-packet corresponding to $\sigma$ shows
  up (with multiplicity one).

\item[(3)] Since $S_\sigma$ is the group of automorphisms of $\sigma$,
  the factor $|S_\sigma|^{-1}$ makes the sum on the left hand side
  \eqref{relative} look like the Lefschetz fixed point formula for
  stacks.

\end{itemize}

\subsection{Geometric meaning: right hand side}

Now we discuss the geometric meaning of formula \eqref{relative},
starting with the right hand side. Let $\Bun^{\F_T}_N$ be the moduli
stack of $B=B^K$ bundles on $X$ such that the corresponding $T$-bundle
is $\F_T = K_X^{\crho}$. Note that
\begin{equation}    \label{dim BunN}
\dim \Bun^{\F_T}_N = d_N = -(g-1)(4 \langle \rho,\crho \rangle - \dim N).
\end{equation}

Let $\on{ev}: \Bun^{\F_T}_N \to {\mathbb G}_a$ be the map
constructed in \cite{FGV:w}.

For instance, if $G=GL_2$, then $\Bun^{\F_T}_N$ classifies
rank two vector bundles $\V$ on $X$ which fit in the exact sequence
\begin{equation}    \label{exten}
0 \to K_X^{1/2} \to \V \to K_X^{-1/2} \to 0,
\end{equation}
where $K_X^{1/2}$ is a square root of $K_X$ which we have fixed.
The map $\on{ev}$ assigns to such $\V$ its extension class in
$\on{Ext}(\OO_X,K_X) = H^1(X,K_X) \simeq {\mathbb G}_a$. For other
groups the construction is similar (see \cite{FGV:w}).

On ${\mathbb G}_a$ we have the Artin-Schreier sheaf ${\mc L}_\psi$
associated to the additive character $\psi$. We define the sheaf
$$
\wt\Psi = \on{ev}^*({\mc L}|_\psi)
$$
on $\Bun^{\F_T}_N$. Next, let $p: \Bun^{\F_T}_N \to \Bun_G$ be the
natural morphism. Let
$$
\Psi = p_!(\wt\Psi)[d_N-d_G]((d_N-d_G)/2).
$$

Then the right hand side of \eqref{relative} is equal to the trace of
the Frobenius on the vector space
\begin{equation}    \label{lhs1}
\on{RHom}(\Psi,{\mathbb K}_{d,\rho}(\Psi)).
\end{equation}
Here we use the fact that ${\mathbb D} \circ \K \simeq \K \circ
{\mathbb D}$ and ${\mathbb D}(\wt\Psi) \simeq \on{ev}^*({\mc
  L}|_{\psi^{-1}})[2d_N](d_N)$.

\subsection{Geometric meaning: left hand side}    \label{geom}

As discussed above, we don't have an algebraic stack parametrizing
homomorphisms $\sigma: W_F \to {}\LG$ if our curve $X$ is defined
over a finite field $\Fq$. But such a stack exists when $X$ is over
$\C$, though in this case there is no Frobenius operator on the
cohomology whose trace would yield the desired number (the left hand
side of \eqref{relative}). In this subsection we will define a certain
vector space (when $X$ is over $\C$) and conjecture that it is
isomorphic to the vector space \eqref{lhs1} which is the
geometrization of the right hand side of \eqref{relative} (and which
is well-defined for $X$ over both $\Fq$ and $\C$). This will be our
``relative geometric trace formula''. In the next subsection we will
show that this isomorphism is a corollary of the categorical version
of the geometric Langlands correspondence.

In order to define this vector space, we will use the coherent sheaf
${\mc F}_{d,\rho}$ on $\on{Loc}_{\LG}$ introduced in \secref{back}.
We propose that the geometrization of the left hand side of
\eqref{relative} in the case when $\K = \K_{d,\rho}$ is the cohomology
\begin{equation}   \label{rhs}
H^\bullet(\on{Loc}_{\LG},{\mc F}_{d,\rho}).
\end{equation}

The heuristic explanation for this proceeds along the lines of the
explanation given in the case of the ordinary trace formula in
\secref{abl fixed}, using the Atiyah--Bott--Lefschetz fixed point
formula.\footnote{We note that applications of the
  Atiyah--Bott--Lefschetz fixed point formula in the context of Galois
  representations have been previously considered by M. Kontsevich in
  \cite{Ko}.} We wish to apply it to the cohomology \eqref{rhs}. If
$\Loc_{\LG}$ were a smooth scheme, then we would have to multiply the
number $N_\sigma$ which corresponds to the stalk of ${\mc F}_{d,\rho}$
at $\sigma$, by the factor
\begin{equation}    \label{weighting factor}
\on{det}(1-\on{Fr},T^*_\sigma \Loc_{\LG})^{-1}.
\end{equation}
Recall that the tangent space to $\sigma$ (in the derived sense) may
be identified with the cohomology $H^\bullet(X,\on{ad} \circ \,
\sigma)[1]$. Using the Poincar\'e duality, we find that the factor
\eqref{weighting factor} is equal to
$$
L(\sigma,\on{ad},1)^{-1}.
$$

Therefore, if we could apply the Lefschetz fixed point formula to the
cohomology \eqref{rhs} and write it as a sum over all $\sigma: W_F \to
\LG$, then the result would be the left hand side of \eqref{relative}
(up to a factor that is a power of $q$). (Note however that since
$\Loc_{\LG}$ is not a scheme, but an algebraic stack, the weighting
factor should be more complicated for those $\sigma$ which admit
non-trivial automorphisms, see the conjectural fixed point formula
\eqref{l factor3} in \secref{abl fixed}.)

This leads us to the following {\em relative geometric trace formula}
(in the case of the functor $\K_{d,\rho}$).

\begin{conj}    \label{rel conj}
We have the following isomorphism of vector spaces:
\begin{equation}    \label{geom relative}
H^\bullet(\on{Loc}_{\LG},{\mc F}_{d,\rho}) \simeq
\on{RHom}_{\on{Bun_G}}(\Psi,{\mathbb K}_{d,\rho}(\Psi)).
\end{equation}
\end{conj}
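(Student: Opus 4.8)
\medskip

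The plan is to derive the isomorphism \eqref{geom relative} from the categorical geometric Langlands correspondence \eqref{na fm}, in exactly the same spirit as the geometric trace formula of \conjref{gtf conj}, but carried out with $\on{RHom}$'s of objects on the stacks $\Loc_{\LG}$ and $\Bun_G$ themselves rather than of kernels on their squares. Write $C$ for the equivalence from the category of $\OO$-modules on $\Loc_{\LG}$ to the category of $\D$-modules on $\Bun_G$. The first step is to recognize the left-hand side of \eqref{geom relative} as an $\on{RHom}$: since the Wilson functor $\W_{d,\rho}$ acts by tensoring with ${\mc F}_{d,\rho}$ (after integration over $X^{(d)}$), one has $\W_{d,\rho}(\OO_{\Loc_{\LG}}) \simeq {\mc F}_{d,\rho}$, and therefore
$$H^\bullet(\Loc_{\LG},{\mc F}_{d,\rho}) \simeq \on{RHom}_{\Loc_{\LG}}\bigl(\OO_{\Loc_{\LG}},\ \W_{d,\rho}(\OO_{\Loc_{\LG}})\bigr).$$

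The second step uses two properties of $C$. The first is the \emph{Whittaker normalization} $C(\OO_{\Loc_{\LG}}) \simeq \Psi$, where $\Psi$ is the Whittaker (Poincar\'e) sheaf of \secref{geom}. Heuristically this is forced by the behaviour of $C$ on objects: $C$ sends the skyscraper $\OO_{\mc E}$ to the Hecke eigensheaf ${\mc F}_{\mc E}$, so it must send the structure sheaf --- the ``universal family'' of skyscrapers --- to the ``universal Hecke eigensheaf'', which is precisely $\Psi$; the cohomological shift and Tate twist in the definition of $\Psi$ are chosen to make this match exact, and the dependence on $K_X^{1/2}$ corresponds to the $\Z_2$-gerbe ambiguity in \eqref{na fm}. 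The second property is the compatibility $C \circ \W_{d,\rho} \simeq {\mathbb K}_{d,\rho} \circ C$, which is obtained from the point-wise compatibility \eqref{WH}, $C \circ \W_{\rho,x} \simeq {\mathbb H}_{\rho,x} \circ C$, together with the fact that $\W_{d,\rho}$ and ${\mathbb K}_{d,\rho}$ are assembled from the Wilson and Hecke functors at moving points by one and the same symmetric-power/factorization construction over $X^{(d)}$. Granting these, \eqref{RHom} applied to ${\mc F}_1 = \OO_{\Loc_{\LG}}$ and ${\mc F}_2 = \W_{d,\rho}(\OO_{\Loc_{\LG}})$ gives
$$\on{RHom}_{\Loc_{\LG}}\bigl(\OO_{\Loc_{\LG}},\ \W_{d,\rho}(\OO_{\Loc_{\LG}})\bigr) \simeq \on{RHom}_{\Bun_G}\bigl(C(\OO_{\Loc_{\LG}}),\ C(\W_{d,\rho}(\OO_{\Loc_{\LG}}))\bigr) \simeq \on{RHom}_{\Bun_G}\bigl(\Psi,\ {\mathbb K}_{d,\rho}(\Psi)\bigr),$$
which is the claimed isomorphism \eqref{geom relative}.

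The main obstacle is that the categorical correspondence \eqref{na fm} is itself conjectural for non-abelian $G$, so this argument is at present conditional. Even granting \eqref{na fm}, the serious points to establish are: (a) the Whittaker normalization $C(\OO_{\Loc_{\LG}}) \simeq \Psi$ in the exact normalization of \secref{geom}, which requires constructing or characterizing the equivalence in Whittaker-normalized form with the correct treatment of the $K_X^{1/2}$-gerbe; (b) the strong Hecke-equivariance needed for $C \circ \W_{d,\rho} \simeq {\mathbb K}_{d,\rho} \circ C$, i.e.\ compatibility of $C$ not merely with the Hecke functors at individual points but with the full factorizable Hecke action over $X^{(d)}$, in a form that survives the integration over the symmetric power; and (c) the foundational issues flagged in \cite{FN} --- making the definitions of $\W_{d,\rho}$ and ${\mc F}_{d,\rho}$ precise, ensuring the $\on{RHom}$'s are well-defined and finite, and working with (ind-)coherent sheaves on the singular derived stack $\Loc_{\LG}$. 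As unconditional sanity checks one could first verify the abelian case $G=GL_1$ directly via the Fourier--Mukai transform of Laumon and Rothstein, and then confirm that taking Frobenius traces of both sides of \eqref{geom relative} reproduces the relative trace formula \eqref{relative} (which itself rests on \conjref{gen}).
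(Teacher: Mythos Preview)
Your proposal is correct and follows essentially the same route as the paper: rewrite $H^\bullet(\Loc_{\LG},{\mc F}_{d,\rho})$ as $\on{RHom}_{\Loc_{\LG}}(\OO,\W_{d,\rho}(\OO))$, invoke the Whittaker normalization $C(\OO)\simeq\Psi$ and the Wilson/Hecke compatibility, and apply \eqref{RHom}. The only minor difference is your heuristic for $C(\OO)\simeq\Psi$ (``universal Hecke eigensheaf'') versus the paper's (matching $\on{RHom}(\OO,\OO_\sigma)=\C$ with the conjecture of \cite{LL} that $\on{RHom}(\Psi,{\mc F}_\sigma)=\C$), but these are two sides of the same coin and the argument is otherwise identical, including the acknowledgment that everything is conditional on the categorical correspondence.
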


Now we explain how the isomorphism \eqref{geom relative} fits in the
framework of a categorical version of the geometric Langlands
correspondence.

\subsection{Interpretation from the point of view of the categorical
  Langlands correspondence}

We start by asking what is the ${\mc D}$-module on $\Bun_G$
corresponding to the structure sheaf $\OO$ on $\on{Loc}_{\LG}$ under
the categorical Langlands correspondence of \secref{cat ver}. The
following answer was suggested by Drinfeld (see \cite{Laf}): it is the
sheaf $\Psi$ that we have discussed above.

The rationale for this proposal is the following: we have
$$
\on{RHom}_{\on{Loc}_{\LG}}(\OO,\OO_\sigma) = \C, \qquad \forall
\sigma,
$$
where $\OO_\sigma$ is again the skyscraper sheaf supported at
$\sigma$. Therefore, since $C(\OO_\sigma) = {\mc F}_\sigma$, we should
have, according to \eqref{RHom},
$$
\on{RHom}_{\Bun_G}(C(\OO),{\mc F}_\sigma) = \C, \qquad \forall
\sigma.
$$
According to the conjecture of \cite{LL}, the sheaf $\Psi$ has just
this property:
$$
\on{RHom}_{\Bun_G}(\Psi,{\mc F}_\sigma)
$$
is the one-dimensional vector space in cohomological degree $0$ (if we
use appropriate normalization for ${\mc F}_\sigma$).

This vector space should be viewed as a geometrization of the
Fourier coefficient of the automorphic function corresponding to ${\mc
  F}_\sigma$.

This provides
some justification for the assertion that\footnote{As explained in the
footnote on page \pageref{foot}, $C$ corresponds to a particular
choice of $K_X^{1/2}$. Given such a choice, $C(\OO)$ should be the
character sheaf $\Psi$ associated to that $K_X^{1/2}$.}
\begin{equation}    \label{CO}
C(\OO) = \Psi.
\end{equation}

Next, we rewrite \eqref{rhs} as
\begin{equation}    \label{rhs1}
\on{RHom}_{\on{Loc}_{\LG}}\left(\OO,\W_{d,\rho}(\OO) \right).
\end{equation}
Using the compatibility \eqref{WH} of $C$ with the
Wilson/Hecke operators and formulas \eqref{RHom} and \eqref{CO}, we
obtain that \eqref{rhs1} should be isomorphic to
\begin{equation}    \label{lhs2}
\on{RHom}_{\Bun_G}\left(\Psi,{\mathbb K}_{d,\rho}(\Psi) \right),
\end{equation}
which is the right hand side of \eqref{geom relative}.

Thus, we obtain that the relative geometric trace formula \eqref{geom
  relative} follows from the categorical version of the geometric
  Langlands correspondence. We hope that this formula may also be
  applied to the Functoriality Conjecture.

\end{document}